\theoremstyle{plain}
\newtheorem{thm}{Theorem}[section]
\newtheorem{lemm}[thm]{Lemma}
\theoremstyle{definition}
\newtheorem{rem}[thm]{Remark}
\begin{document}
\title[Time periodic solutions to the $2$D quasi-geostrophic equation]
{Time periodic solutions to the $2$D quasi-geostrophic equation with the supercritical dissipation}
\author[Mikihiro Fujii]{Mikihiro Fujii}
\address[]{Graduate School of Mathematics Kyushu University,Fukuoka 819--0395, JAPAN}
\email{3MA20005M@s.kyushu-u.ac.jp}
%\date{}
%\thanks{}
%\translator{}
\keywords{the $2$D dissipative quasi-geostrophic equations, time periodic solution, energy estimates}
\subjclass[2010]{35Q35, 35Q86, 35B10}
\begin{abstract}
	We consider the $2$D dissipative quasi-geostrophic equation with the time periodic external force
	and prove the existence of a unique time periodic solution in the case of the supercritical dissipation.
	In this case, the smoothing effect of the semigroup generated by the dissipation term is too weak to control the nonlinearity in the Duhamel term of the correponding integral equation.
	In this paper, we give a new approach which does not depend on the contraction mapping principle for the integral equation.
\end{abstract}
\maketitle

\section{Introduction}\label{s1}
We consider the $2$D dissipative quasi-geostrophic equation with the time periodic external force:
\begin{equation}\label{QG}
	\begin{cases}
		\partial_t\theta
			+(-\Delta)^{\frac{\alpha}{2}}\theta
			+u \cdot \nabla \theta
			=F,
			&\qquad t>0, x\in \mathbb{R}^2,\\
		u=\mathcal{R}^{\perp}\theta=(-\mathcal{R}_2\theta,\mathcal{R}_1\theta),
			&\qquad t\geqslant 0, x\in \mathbb{R}^2,\\
		\theta(0,x)=\theta_0(x),
			&\qquad x\in \mathbb{R}^2,
	\end{cases}
\end{equation}
where $\theta=\theta(t,x)$ and $u=(u_1(t,x),u_2(t,x))$ represent the unknown potential temperature of the fluid with some initial value $\theta_0$ and the unknown velocity field of the fluid, respectively.
The given external force $F=F(t,x)$ is $T$-time periodic, that is $F$ satisfies $F(t+T)=F(t)$ $(t>0)$ for some $T>0$.
The two operators $(-\Delta)^{\frac{\alpha}{2}}$ $(0<\alpha\leqslant 2)$ and $\mathcal{R}_k$ $(k=1,2)$
denote the nonlocal differential operators
so-called the fractional Laplacian and the Riesz transforms on $\mathbb{R}^2$, respectively and they are defined by
\begin{equation*}\label{1.1}
		(-\Delta)^{\frac{\alpha}{2}}f
			=\mathscr{F}^{-1}\left[|\xi|^{\alpha} \widehat{f}(\xi) \right],
		\qquad
		\mathcal{R}_kf
			=\partial_{x_k}(-\Delta)^{-\frac{1}{2}}f
			=\mathscr{F}^{-1}\left[\frac{i\xi_k}{|\xi|} \widehat{f}(\xi) \right].
\end{equation*}

In this paper, we prove the existence of a unique $T$-time periodic solution of (\ref{QG}) with the supercritical dissipation
if the given $T$-time periodic external force is sufficiently small.

Before we state the main result precisely,
we recall some known results for the $2$D dissipative quasi-geostrophic equation
with the case $F=0$, that is the usual initial value problem:
\begin{equation}\label{IVP_QG}
	\begin{cases}
		\partial_t\theta+(-\Delta)^{\frac{\alpha}{2}}\theta+u\cdot \nabla \theta=0,
			\qquad & t>0,x\in \mathbb{R}^2,\\
		u=\mathcal{R}^{\perp}\theta=(-\mathcal{R}_2\theta,\mathcal{R}_1\theta),
		 	\qquad & t\geqslant 0,x\in \mathbb{R}^2,\\
		\theta(0,x)=\theta_0(x),
			\qquad & x\in \mathbb{R}^2.
	\end{cases}
\end{equation}
Based on the scaling transform and the $L^{\infty}(\mathbb{R}^2)$-conservation, the dissipative quasi-geostrophic equation is divided
into the subcritical case $1<\alpha\leqslant 2$, critical case $\alpha=1$ and supercritical case $0<\alpha<1$.
In the subcritical case, Constantin-Wu \cite{CW} proved the existence of a weak solution and decay estimates with respect to $L^2$ norm for the initial data $\theta_0\in L^2(\mathbb{R}^2)$.
Wu \cite{Wu2} proved the global well-posedness for small data in the scaling subcritical setting $\theta_0\in L^p(\mathbb{R}^2)$ ($p>2/(\alpha-1)$) via the contraction mapping principle for the correponding integral equation.
In the critical case, the order of the spatial derivative in the dissipation term coincides with that in the nonlinear term.
Zhang \cite{Zhang} noticed this property and proved the existence of the global in time mild solution in the scaling critical Besov space $\dot{B}_{p,1}^{\frac{2}{p}}(\mathbb{R}^2)$ ($1\leqslant p \leqslant \infty$).
Global well-posedness in the Tribel-Lizorkin spaces $F_{p,q}^s(\mathbb{R}^2)$ ($s>2/p$, $1<p,q<\infty$) is proved by Chen-Zhang \cite{CZ}.
In the supercritical case, the order of the spatial derivative in the dissipation term is less than that in the nonlinear term.
Therefore,
the smoothing effect of the fractional heat kernel $e^{-t(-\Delta)^{\frac{\alpha}{2}}}$ is too weak to control the spatial derivative in the nonlinear term.
Although, this implies that it seems to be impossible to construct a solution of (\ref{IVP_QG}),
it is able to overcome this and the local well-posedness for large data and the global well-posedness for small data
in the scaling critical Sobolev $H^{2-\alpha}(\mathbb{R}^2)$ by Miura \cite{Miura}
and Besov spaces ${B}_{p,q}^{1+\frac{2}{p}-\alpha}(\mathbb{R}^2)$ ($2\leqslant p<\infty$, $1\leqslant q<\infty$)
by Bae \cite{Bae}, Chae-Lee \cite{CL} and Chen-Miao-Zhang \cite{CMZ2}.
Their method is based on the energy estimates for the iteration of the transport-diffusion type equation and they control the nolinear term
by the divergence free condition $\nabla \cdot u=0$ and the commutator estimates.

On the other hand, despite the large number of previous studies on the well-posedness of the initial value problem (\ref{IVP_QG}),
the study on the existence of time periodic solutions to the $2$D quasi-geostrophic equation  is hardly known.

In this manuscript, we consider the supercritical case and prove
the existence of a unique suitable initial data and a unique time periodic solution to (\ref{QG}) in the scaling critical Besov space
if the given time periodic external force is sufficiently small.
More precisely, our main result of this paper reads as follows.
\begin{thm}\label{t1-1}
	Let $T>0$ and $2/3<\alpha<1$.
	Let exponents $p$, $q$ and $r$ satisfy
	\begin{equation}\label{1.2}
		\frac{2}{2\alpha-1}<r\leqslant p<\frac{4}{\alpha},
		\qquad
		1\leqslant q<\infty.
	\end{equation}
	Then, there exist positive constants $\delta=\delta(\alpha,p,q,r,T)$ and $K=K(\alpha,p,q,r,T)$
	such that
	if the given $T$-time periodic external force $F\in BC((0,\infty); \dot{B}_{r,\infty}^0(\mathbb{R}^2))$ satisfies
	\begin{equation*}\label{1.3}
		\sup_{t>0} \| F(t) \|_{\dot{B}_{r,\infty}^0}\leqslant \delta,
	\end{equation*}
	then there exist a unique initial data $\theta_0 \in B_{p,q}^{1+\frac{2}{p}-\alpha}(\mathbb
	{R}^2)$ and a unique $T$-time periodic solution $\theta$ to (\ref{QG}) satisfying
	\begin{equation}\label{1.4}
		\theta \in BC([0,\infty);B_{p,q}^{1+\frac{2}{p}-\alpha}(\mathbb{R}^2)),
		\qquad
		\|\theta\|_{\widetilde{L}^{\infty}(0,\infty;\dot{B}_{p,q}^{1+\frac{2}{p}-\alpha})}\leqslant K.
	\end{equation}
\end{thm}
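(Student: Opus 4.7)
The strategy avoids the failed Duhamel contraction and builds the periodic solution as a limit of Cauchy problems with the initial time pushed into the past, with uniform control coming from energy estimates in the critical Besov space, in the spirit of Miura, Bae, Chae--Lee and Chen--Miao--Zhang.

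\emph{Step 1 (uniform a priori estimate).} For each $n\in\mathbb{N}$, I would solve the Cauchy problem for (\ref{QG}) on $(-nT,\infty)$ with the given force $F$ and data $\theta^{(n)}(-nT)=0$. Applying a Littlewood--Paley block $\Delta_j$, testing against $|\Delta_j\theta^{(n)}|^{p-2}\Delta_j\theta^{(n)}$ and invoking the Wu-type lower bound
\begin{equation*}
	\int_{\mathbb{R}^2} (-\Delta)^{\alpha/2}\Delta_j\theta\cdot|\Delta_j\theta|^{p-2}\Delta_j\theta\,dx \gtrsim 2^{\alpha j}\|\Delta_j\theta\|_{L^p}^p
\end{equation*}
replaces the missing smoothing of $e^{-t(-\Delta)^{\alpha/2}}$ by a pointwise positivity gain. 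The transport is treated by a Chemin--Lerner commutator estimate on $[\Delta_j,u\cdot\nabla]\theta$, which uses $\nabla\cdot u=0$ to export the derivative cost onto the lower-frequency factor; the resulting bilinear bound is paid for by the dissipation gain of order $\alpha$, which is just enough when $2/3<\alpha<1$ and $p<4/\alpha$. The contribution of $F$ is absorbed via Besov embeddings made possible by the assumption $2/(2\alpha-1)<r\leqslant p$. Summing in $j$ with weights $2^{j(1+2/p-\alpha)}$ in $\ell^q$ produces a closed nonlinear integral inequality, and smallness of $F$ yields
\begin{equation*}
	\sup_{n\in\mathbb{N}}\|\theta^{(n)}\|_{\widetilde L^\infty(-nT,\infty;\dot B^{1+2/p-\alpha}_{p,q})}\leqslant K.
\end{equation*}

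\emph{Step 2 (periodicity, existence and uniqueness).} The advantage of placing the initial time at $-nT$ is that, since $F$ is $T$-periodic, the function $\theta^{(n)}(\cdot+T)$ solves the same Cauchy problem as $\theta^{(n+1)}$ on $[-(n+1)T,\infty)$; uniqueness of the Cauchy problem, proved by the same energy method applied to a difference, then forces $\theta^{(n+1)}(t)=\theta^{(n)}(t+T)$. Equicontinuity in time (deduced from the equation applied to the uniform bound) combined with a weak-$*$ compactness argument extracts a locally convergent subsequence $\theta^{(n)}\to\theta$; the compatibility relation passes to the limit to give $\theta(\cdot+T)=\theta$ on $\mathbb{R}$. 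Restricting to $[0,\infty)$ and setting $\theta_0:=\theta(0)$ provides the periodic solution and the bound in (\ref{1.4}). Uniqueness of the pair $(\theta_0,\theta)$ within the small ball follows from the analogous energy estimate applied to the difference of two candidate periodic solutions, where periodicity cancels any boundary contribution at $t=0$ and the smallness hypothesis on $F$ absorbs the bilinear remainder.

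The principal obstacle is Step 1. The gap $1-\alpha>0$ between the order of the transport and that of the dissipation means that only the divergence-free structure, funnelled through the commutator, separates a working estimate from divergence; the restriction $\alpha>2/3$ is precisely the threshold at which this commutator algebra closes in the critical Besov scale with the stated range of $(p,q,r)$.
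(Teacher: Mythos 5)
Your overall architecture (shift the initial time to $-nT$, solve Cauchy problems with zero data, and pass to the limit) is genuinely different from the paper's, which instead runs a successive approximation on a single period $[0,T]$ and, at each iteration step, manufactures the initial datum from the necessary condition $(1-e^{-T(-\Delta)^{\frac{\alpha}{2}}})\theta_0^{(n+1)}=\theta^{(n)}(T)-e^{-T(-\Delta)^{\frac{\alpha}{2}}}\theta^{(n)}(0)$ in the spirit of Geissert--Hieber--Nguyen, inverting $1-e^{-T(-\Delta)^{\frac{\alpha}{2}}}$ via the series $\sum_{k\geqslant 0}e^{-Tk(-\Delta)^{\frac{\alpha}{2}}}$. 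Unfortunately, your Step 2 has a genuine gap exactly where this inversion does the work in the paper. Your compatibility relation $\theta^{(n+1)}(t)=\theta^{(n)}(t+T)$ means that $\theta^{(n)}(0)=\theta^{(0)}(nT)$, so producing a periodic limit amounts to proving that the orbit $\theta^{(0)}(nT)$ converges (for the full sequence, not a subsequence). Weak-$*$ compactness only yields a subsequential limit, and the relation $\theta^{(n_k)}(\cdot+T)=\theta^{(n_k+1)}$ does not pass to that limit, because nothing forces the shifted subsequence $\{n_k+1\}$ to converge to the same object. The standard repair --- showing the period (Poincar\'e) map is a strict contraction so that $\|\theta^{(n+1)}(0)-\theta^{(n)}(0)\|$ decays geometrically --- is unavailable here: on $\mathbb{R}^2$ the semigroup $e^{-t(-\Delta)^{\frac{\alpha}{2}}}$ has no spectral gap, and in the homogeneous Besov topology one only has $\|e^{-t(-\Delta)^{\frac{\alpha}{2}}}f\|\to 0$ without any rate (Lemma \ref{t2-1}(3)), so the difference estimate over one period gives at best $\|d_{n+1}\|\leqslant(1+CK)\|d_n\|$. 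Low frequencies simply do not contract.

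This is precisely why the paper takes the other route: Lemma \ref{t2-5} shows that $\sum_k e^{-Tk(-\Delta)^{\frac{\alpha}{2}}}f$ converges provided $f\in\dot{B}_{p,q}^{s-\alpha}\cap\dot{B}_{p,q}^{s}$, i.e.\ provided one has the extra low-frequency information $\dot{B}_{p,q}^{s-\alpha}$ to compensate for the slow decay of $e^{-Tk|\xi|^{\alpha}}$ near $\xi=0$; and that extra regularity of $\psi^{(n)}(T)$ is exactly what the subcritical hypothesis $F\in\dot{B}_{r,\infty}^{0}$ with $r>2/(2\alpha-1)$ buys in the energy estimates (the integrals $\sum_j\int_0^T 2^{\beta j}e^{-\lambda 2^{\alpha j}(T-\tau)}d\tau$ close only for $\beta<\alpha$). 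Your proposal never engages with this low-frequency obstruction, and its Step 2 would not close as written. Separately, your Step 1 (global small-data energy theory with forcing on $(-nT,\infty)$, uniformly in $n$) is plausible and close in spirit to the commutator-based estimates the paper actually performs, and your uniqueness sketch points in the right direction but likewise needs the periodic representation of $\theta(0)-\widetilde{\theta}(0)$ through $(1-e^{-T(-\Delta)^{\frac{\alpha}{2}}})^{-1}$ (as in Lemma \ref{t3-3}) rather than a vague cancellation of boundary terms.
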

\begin{rem}\label{t1-2}

	\noindent
	\begin{itemize}
		\item [(1)]
			If $\theta$ and $F$ satisfy (\ref{QG}), then
			\begin{equation*}\label{1.5}
				\theta_{\lambda}(t,x)=\lambda^{\alpha-1}\theta(\lambda^{\alpha}t,\lambda x),\quad
				F_{\lambda}(t,x)=\lambda^{2\alpha-1}F(\lambda^{\alpha}t,\lambda x)
			\end{equation*}
			also satisfy (\ref{QG}) for all $\lambda>0$.
			Since it holds
			\begin{equation*}\label{1.6}
				\begin{split}
					\sup_{t\geqslant 0}\|\theta_{\lambda}(t)\|_{\dot{B}_{p,q}^{1+\frac{2}{p}-\alpha}}
						&=\sup_{t\geqslant 0}\|\theta(t)\|_{\dot{B}_{p,q}^{1+\frac{2}{p}-\alpha}},\\
					\sup_{t>0}\|F_{\lambda}(t)\|_{\dot{B}_{2/(2\alpha-1),\infty}^0}
						&=\sup_{t>0}\|F(t)\|_{\dot{B}_{2/(2\alpha-1),\infty}^0}
				\end{split}
			\end{equation*}
			for all dyadic numbers $\lambda>0$,
			the function spaces $BC([0,\infty);B_{p,q}^{1+\frac{2}{p}-\alpha}(\mathbb{R}^2))$ and $BC((0,\infty);\dot{B}_{r,\infty}^0(\mathbb{R}^2))$ in Theorem \ref{t1-1} are scaling critical and subcritical setting, respectively.
		\item [(2)]
			Our smallness condition $\delta$ and $K$ depend continuously on $T$ and go to $0$ as $T\to +0$ or $T\to \infty$.
			Hence, we can take $\delta$ and $K$ local uniformly for $T\in (0,\infty)$.
		\item [(3)]
			The assumption $2/3<\alpha$ in Theorem \ref{t1-1} ensures the exsitence of $p$ and $r$ satisfying (\ref{1.2}).
	\end{itemize}
\end{rem}
In the case of the Navier-Stokes equation, the existence of time periodic solutions is often proved by applying the contraction mapping principle to the correponding integral equation.
(It was in \cite{KN} that first used this idea.)
However, in the case of our problem, the supercritical dissipation prevents us from using this scheme.
Indeed, when we apply the idea of \cite{KN} to (\ref{QG}) on the whole time line $\mathbb{R}$,
we meet the difficulty that
%Indeed, as we follow the idea of \cite{KN} and consider the integral equation of the time periodic problem for the 2D supercritical dissipative quasi-geostrophic equation on the whole time line $\mathbb{R}$,
the smoothing effect of the fractional heat kernel $e^{-t(-\Delta)^{\frac{\alpha}{2}}}$ is too weak to control the first order spatial derivative of the nonlinear term and it is pretty difficult to find a Banach space $X$ satisfying
\begin{equation*}\label{1.7}
	\sup_{t\in \mathbb{R}}
	\left \|
		\int_{-\infty}^t e^{-(t-\tau)(-\Delta)^{\frac{\alpha}{2}}}
		(\mathcal{R}^{\perp}\theta(\tau) \cdot \nabla \theta(\tau)) d\tau
	\right\|_X
	\leqslant
	C\left(\sup_{t\in \mathbb{R}}\|\theta(t)\|_X\right)^2.
\end{equation*}
As another approach, let us consider the successive approximation defined by the transport diffusion type equation
\begin{equation}\label{1.7.1}
	\begin{cases}
		\partial_t \theta^{(n+1)}
			+(-\Delta)^{\frac{\alpha}{2}} \theta^{(n+1)}
			+u^{(n)} \cdot \nabla \theta^{(n+1)}
			=S_{n+4}F,
			&\qquad t\in \mathbb{R},x\in \mathbb{R}^2,\\
			u^{(n)}=\mathcal{R}^{\perp} \theta^{(n)},
			&\qquad t\in \mathbb{R}, x\in\mathbb{R}^2.\\
	\end{cases}
\end{equation}
Then, we can obtain the a priori estimates for the approximation solutions by the energy method. However, it seems to be difficult to construct a time periodic solution $\theta^{(n+1)}$ of (\ref{1.7.1}) when $\theta^{(n)}$ is determined.
Therefore, we are not able to proceed in parallel with the energy method of the initial value problem for the supercritical case.

%We now introduce an idea to overcome these difficulties and get a time periodic solution.
%{Our idea is to consider the successive approximation for (\ref{QG}) with together the condition to be satisfied the initial data of the time periodic solution.}
%Then, we get a continuous in time solution on $[0,T]$  satisfying $\theta(T)=\theta(0)=\theta_0$ by the well-posedness theory for the initial value problem
%find a suitable initial data $\theta_0$ and get a continuous in time solution $\theta$ on $[0,T]$ satisfying $\theta(T)=\theta(0)=\theta_0$.
%and we obtain a $T$-time periodic solution by extending the solution periodically in time.
%To find the condition to be satisfied the initial data, we use the idea in \cite{GHN}. Geissert-Hieber-Nguyen \cite{GHN} considered
%the time periodic problem of the abstract lienear equation:
We now introduce an idea to overcome these difficulties and get a time periodic solution.
{Our idea is to consider the successive approximation for the solution to (\ref{QG}) together with the initial data satisfying a necessary condition which ensures the existence of time periodic solutions.

We explain the necessary condition for the initial data by
using the idea by Geissert-Hieber-Nguyen \cite{GHN}.
In \cite{GHN}, they considered the time periodic problem of the abstract lienear equation}
\begin{equation}\label{1.8}
	\begin{cases}
		\partial_t u+Au=F, \qquad & t>0,\\
		u(0)=u_0, \qquad &t=0,
	\end{cases}
\end{equation}
where $A$ denotes by a closed operator satisfying some conditions and $F$ is given $T$-time periodic external force.
It is proved in \cite{GHN} that there exist a initial data $u_0=u_0(F)$ such that (\ref{1.8}) admits a $T$-time periodic solution
\begin{equation*}\label{1.9}
	u(t)=e^{-tA}u_0+\int_0^t e^{-(t-\tau)A}F(\tau) d\tau
\end{equation*}
in some interpolation spaces
by noting that $u_0$, which ensures the existence of a time periodic solution, should satisfy
\begin{equation*}
	\left( 1-e^{-TA} \right)u_0=\int_0^T e^{-(T-\tau)A}F(\tau) d\tau.
\end{equation*}
Our approach is to incorporate this idea to the successive approximation of (\ref{QG})
and we define $\{\theta_0^{(n)}\}_{n=0}^{\infty}$ and $\{\theta^{(n)}\}_{n=0}^{\infty}$ inductively by
\begin{equation}\label{1.10}
	\begin{cases}
		\partial_t \theta^{(n+1)}
			+(-\Delta)^{\frac{\alpha}{2}} \theta^{(n+1)}
			+u^{(n)} \cdot \nabla \theta^{(n+1)}
			=S_{n+4}F,
			&\qquad 0<t\leqslant T,x\in \mathbb{R}^2,\\
			u^{(n)}=\mathcal{R}^{\perp} \theta^{(n)},
			&\qquad 0\leqslant t\leqslant T, x\in\mathbb{R}^2,\\
		\theta^{(n+1)}(0,x)=S_{n+4}\theta_0^{(n+1)},
			&\qquad x\in \mathbb{R}^2,
	\end{cases}
\end{equation}
where $\theta_0^{(n+1)}$ satisfies
\begin{equation*}\label{1.11}
	\begin{split}
		(1-e^{-T(-\Delta)^{\frac{\alpha}{2}}})\theta_0^{(n+1)}
		&=\int_0^Te^{-(T-\tau)(-\Delta)^{\frac{\alpha}{2}}}(S_{n+3}F(\tau)-u^{(n-1)}(\tau) \cdot \nabla \theta^{(n)}(\tau)) d\tau\\
		&=\theta^{(n)}(T)-e^{-T(-\Delta)^{\frac{\alpha}{2}}}\theta^{(n)}(0).
	\end{split}
\end{equation*}
(See in Section \ref{s3} for the precise definition.)
We then gain the uniform boundedness of the sequences $\{\theta_0^{(n)}\}_{n=0}^{\infty}$ and $\{\theta^{(n)}\}_{n=0}^{\infty}$ by the energy method
%We then gain estimates for $\theta^{(n+1)}$ by applying the energy estimates of the \cite{Bae} type to (\ref{1.10}).
%We also get estimates for $\theta_0^{(n+1)}$ by using Lemma \ref{t2-5} below and energy estimetes for the equation of the perturbation $\theta^{(n)}(t)-e^{-t(-\Delta)^{\frac{\alpha}{2}}}\theta^{(n)}(0)$.
%Combining these estimates, we get a uniform boundedness of the successive approximation sequences
if the size of the time periodic external force is sufficiently small.
{Then, we get a continuous in time solution $\theta$ on $[0,T]$  satisfying $\theta(T)=\theta(0)=\theta_0$ by  converging the sequences
and we obtain a $T$-time periodic solution by extending the solution periodically in time.}
We can also prove the uniqueness by the similar argument as in the convergence part.

This paper is organized as follows. In Section \ref{s2}, we summarize some notations
and introduce lemmas which are key ingredients of the proof of the main results.
In Section \ref{s3}, we prove Theorem \ref{t1-1}.

Throughout this paper, we denote by $C$ the constant, which may differ in each line. In particular, $C=C(a_1,...,a_n)$ means that $C$ depends only on $a_1,...,a_n$. We define a commutator for two operators $A$ and $B$ as $[A,B]=AB-BA$.

\section{Preliminaries}\label{s2}
Let $\mathscr{S}(\mathbb{R}^2)$ be the set of all Schwartz functions on $\mathbb{R}^2$ and let $\mathscr{S}'(\mathbb{R}^2)$ be the set of all tempered distributions on $\mathbb{R}^2$.
For $f\in \mathscr{S}(\mathbb{R}^2)$, we define the Fourier transform and the inverse Fourier transform of $f$ by
\begin{equation*}\label{2.0}
\begin{array}{cc}
		\mathscr{F}[f](\xi)=\widehat{f}(\xi):=\displaystyle\int_{\mathbb{R}^2}e^{-i\xi\cdot x}f(x)\ dx, &	\mathscr{F}^{-1}[f](x):=\dfrac{1}{(2\pi)^2}\displaystyle\int_{\mathbb{R}^2}e^{i\xi\cdot x}f(\xi)\ d\xi,
\end{array}
\end{equation*}
respectively.
$\{\varphi_j\}_{j\in \mathbb{Z}}$ is called the homogeneous Littlewood-Paley decomposition if $\varphi_0\in \mathscr{S}(\mathbb{R}^2)$ satisfy ${\rm supp}\ \widehat{\varphi_0} \subset \{2^{-1}\leqslant |\xi|\leqslant 2\}$, $0\leqslant \widehat{\varphi_0}\leqslant 1$ and
\begin{equation*}\label{2.1}
		 \sum_{j\in\mathbb{Z}}\widehat{\varphi_j}(\xi)=1, \quad \quad \xi \in \mathbb{R}^2\setminus\{0\},
\end{equation*}
where $\widehat{\varphi_j}(\xi)=\widehat{\varphi_0}(2^{-j}\xi)$.
Let us write
\begin{equation*}\label{2.2}
		\Delta_jf:= \varphi_j*f
		%=\mathscr{F}^{-1}\left[ \widehat{\phi_j}(\xi)\widehat{f}(\xi) \right]
\end{equation*}
for $j\in \mathbb{Z}$ and $f\in \mathscr{S}'(\mathbb{R}^2)$.
Using the homogeneous Littlewood-Paley decomposition, we define the Besov spaces. For $1\leqslant p,q\leqslant \infty$ and $s\in \mathbb{R}$, the homogeneous Besov space $\dot{B}^s_{p,q}(\mathbb{R}^2)$ is defined by
\begin{equation*}\label{2.3}
	\begin{split}
		\dot{B}^s_{p,q}(\mathbb{R}^2)
			&:=\left\{f\in \mathscr{S}_0'(\mathbb{R}^2)\ ;\
		 	\|f\|_{\dot{B}^s_{p,q}}<\infty.\right\},\\
		  \|f\|_{\dot{B}^s_{p,q}}
			&:=\left\|\left\{2^{js}\|\Delta_j f\|_{L^p}\right\}_{j\in \mathbb{Z}} \right\|_{l^q(\mathbb{Z})},
	\end{split}
\end{equation*}
where $\mathscr{S}_0'(\mathbb{R}^2)$ is the dual space of
\begin{equation*}\label{2.4}
	\mathscr{S}_0(\mathbb{R}^2):=\left\{ f\in \mathscr{S}(\mathbb{R}^2)\ ;\ \int_{\mathbb{R}^2}x^{\gamma}f(x) dx=0{\rm\ for\ all\ }\gamma\in (\mathbb{N}\cup\{0\})^2. \right\}.
\end{equation*}
Note that $\dot{B}_{p,q}^s(\mathbb{R}^2)$ is a Banach space with respect to the norm $\|\cdot\|_{\dot{B}_{p,q}^s}$.
It is well known that
if $1\leqslant p,q\leqslant \infty$ and $s<2/p$, then we can identify $\dot{B}_{p,q}^s(\mathbb{R}^2)$ as
\begin{equation*}
	\left\{f\in \mathscr{S}'(\mathbb{R}^2)\ ;\ f=\sum_{j\in \mathbb{Z}}\Delta_jf\ {\rm in}\ \mathscr{S}'(\mathbb{R}^2)\ {\rm and}\ \|f\|_{\dot{B}_{p,q}^s}<\infty.\right\}.
\end{equation*}
(See for the detail in \cite{KY} and \cite{Sawano}.)
For $s>0$ and $1\leqslant p,q\leqslant \infty$, the inhomogeneous Besov space $B_{p,q}^s(\mathbb{R}^2)$ is defined by
\begin{equation*}\label{2.6}
	\begin{split}
		B_{p,q}^s(\mathbb{R}^2)&:=\dot{B}_{p,q}^s(\mathbb{R}^2)\cap L^p(\mathbb{R}^2),\\
		\|f\|_{B_{p,q}^s}&:=\|f\|_{\dot{B}_{p,q}^s}+\|f\|_{L^p}.
	\end{split}
\end{equation*}
In this paper, we also use the space-time Besov spaces defined by
\begin{equation*}\label{2.7}
\begin{split}
	\widetilde{L}^r(0,T;\dot{B}_{p,q}^s(\mathbb{R}^2))
		&:=\left\{F:(0,T)\to \mathscr{S}_0'(\mathbb{R}^2)\ ;\ \|F\|_{\widetilde{L}^r(0,T;\dot{B}_{p,q}^s)}<\infty.\right\},\\
	\|F\|_{\widetilde{L}^r(0,T;\dot{B}_{p,q}^s)}
		&:=\left\|\left\{2^{sj}\|\Delta_jF\|_{L^r(0,T;L^p)}\right\}_{j\in\mathbb{Z}}\right\|_{l^{q}(\mathbb{Z})}
\end{split}
\end{equation*}
for $1\leqslant p,q,r\leqslant \infty,s\in\mathbb{R}$ and $0<T\leqslant \infty$.

Next, we introduce the semigroup generated by the fractional Laplacian $(-\Delta)^{\frac{\alpha}{2}}$.
It is given explicitly by using the Fourier transform:
\begin{equation*}\label{2.7.1}
	e^{-t(-\Delta)^{\frac{\alpha}{2}}}f
	=\mathscr{F}^{-1}\left[e^{-t|\xi|^{\alpha}} \widehat{f}(\xi) \right].
\end{equation*}
Then, this semigroup possesses the following properties:
\begin{lemm}\label{t2-1}
	Let $\alpha>0$ and $1\leqslant p,q \leqslant \infty$.
	Then, the followings hold:
	\begin{itemize}
		\item [(1)] There exists a positive constant $C=C(\alpha)$
		such that
		\begin{equation*}\label{2.7.2}
			\left\| e^{-t(-\Delta)^{\frac{\alpha}{2}}} \Delta_j f \right\|_{L^p}
			\leqslant
			C e^{-C^{-1}2^{\alpha j}t} \|\Delta_jf\|_{L^p}
		\end{equation*}
		holds for all $t>0$, $j\in \mathbb{Z}$ and $f\in \mathscr{S}'_0(\mathbb{R}^2)$ with $\Delta_j f\in L^p(\mathbb{R}^2)$.
		\item [(2)]  Let $s_1,s_2\in \mathbb{R}$ satisfy $s_1\leqslant s_2$.
		Then, there exists a positive constant $C=C(\alpha,s_1,s_2)$
		such that
		\begin{equation*}\label{2.7.3}
			2^{s_2 j}\left\|e^{-t(-\Delta)^{\frac{\alpha}{2}}}\Delta_j f\right\|_{L^p}
			\leqslant
			C t^{-\frac{s_2-s_1}{\alpha}}2^{s_1 j}\|\Delta_j f\|_{L^p}
		\end{equation*}
		holds for all $t>0$, $j\in \mathbb{Z}$ and $f\in \mathscr{S}'_0(\mathbb{R}^2)$ with $\Delta_j f\in L^p(\mathbb{R}^2)$.
		In particular, it holds
		\begin{equation*}\label{2.7.3.1}
			\left\| e^{-t(-\Delta)^{\frac{\alpha}{2}}}f\right\|_{\dot{B}_{p,q}^{s_2}}
			\leqslant
			C
			t^{-\frac{s_2-s_1}{\alpha}}\|f\|_{\dot{B}_{p,q}^{s_1}}
		\end{equation*}
		for all $t>0$ and $f\in \dot{B}_{p,q}^{s_1}(\mathbb{R}^2)$.
		\item [(3)] Let $s\in \mathbb{R}$.
		Then, for each $f\in \dot{B}_{p,q}^s(\mathbb{R}^2)$, it holds
		\begin{equation*}\label{2.7.4}
			\lim_{t\to \infty} \left\|e^{-t(-\Delta)^{\frac{\alpha}{2}}}f\right\|_{\dot{B}_{p,q}^s}=0.
		\end{equation*}
	\end{itemize}
\end{lemm}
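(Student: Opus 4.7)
The plan is to reduce everything to the pointwise kernel estimate in (1); once (1) is in hand, (2) follows algebraically from a short calculation, and (3) follows by a dominated-convergence type argument. I do not expect a genuine obstacle in any of the three parts; all are standard, and the main point of care will be whether (3) can be pushed to $q=\infty$.

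For (1), the idea is to realize $e^{-t(-\Delta)^{\alpha/2}}\Delta_j f$ as convolution against a frequency-localized kernel and to estimate that kernel in $L^1$. I would pick $\widetilde{\varphi_0} \in \mathscr{S}(\mathbb{R}^2)$ whose Fourier transform equals $1$ on $\mathrm{supp}\,\widehat{\varphi_0}$ and is supported in a slightly larger annulus, and set $\widehat{\widetilde{\varphi_j}}(\xi):=\widehat{\widetilde{\varphi_0}}(2^{-j}\xi)$. Then $\widetilde{\varphi_j}*\Delta_j f = \Delta_j f$, so
\begin{equation*}
e^{-t(-\Delta)^{\alpha/2}}\Delta_j f = K_{j,t}*\Delta_j f, \qquad K_{j,t}:=\mathscr{F}^{-1}\bigl[e^{-t|\xi|^\alpha}\widehat{\widetilde{\varphi_j}}(\xi)\bigr],
\end{equation*}
and Young's inequality reduces the task to $\|K_{j,t}\|_{L^1}\leq Ce^{-C^{-1}2^{\alpha j}t}$. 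A rescaling $\xi=2^j\eta$ gives $\|K_{j,t}\|_{L^1}=\|K(\cdot,s)\|_{L^1}$ with $s=2^{\alpha j}t$ and $K(\cdot,s):=\mathscr{F}^{-1}[e^{-s|\eta|^\alpha}\widehat{\widetilde{\varphi_0}}(\eta)]$. Since $|\eta|^\alpha$ is bounded below by some $c_0>0$ on $\mathrm{supp}\,\widehat{\widetilde{\varphi_0}}$, one can factor out $e^{-c_0 s}$; the remaining multiplier $e^{-s(|\eta|^\alpha-c_0)}\widehat{\widetilde{\varphi_0}}(\eta)$ has all derivatives bounded uniformly in $s\geq 0$, hence its inverse Fourier transform has $L^1$ norm uniformly bounded in $s$. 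This yields (1).

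For (2), combining (1) with the elementary bound $x^a e^{-x/C}\leq C'$ for $x\geq 0$ and $a:=(s_2-s_1)/\alpha\geq 0$, applied to $x=2^{\alpha j}t$, gives
\begin{equation*}
2^{s_2 j}e^{-C^{-1}2^{\alpha j}t} = 2^{s_1 j}(2^{\alpha j}t)^{a}t^{-a}e^{-C^{-1}2^{\alpha j}t} \leq Ct^{-(s_2-s_1)/\alpha}2^{s_1 j},
\end{equation*}
which together with (1) yields the blockwise version of (2); taking $\ell^q(\mathbb{Z})$ norms in $j$ gives the Besov version.

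For (3), the strategy is dominated convergence. By (1), for each fixed $j\in\mathbb{Z}$ one has $2^{sj}\|e^{-t(-\Delta)^{\alpha/2}}\Delta_j f\|_{L^p}\to 0$ as $t\to\infty$, uniformly dominated in $t\geq 0$ by the $\ell^q$-summable sequence $C\cdot 2^{sj}\|\Delta_j f\|_{L^p}$. For $1\leq q<\infty$ the dominated convergence theorem on $\ell^q(\mathbb{Z})$ immediately gives the conclusion. The case $q=\infty$ is more delicate, since $\ell^\infty$-dominated convergence fails; the natural workaround is to approximate $f$ by finite-frequency truncations $\sum_{|j|\leq N}\Delta_j f$, which decay exponentially by (1), and control the tail through the Besov norm. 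This last step is the only part of the lemma requiring real care.
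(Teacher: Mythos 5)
Your treatment of (1) and (2) is essentially the standard one. For (1) the paper does not give a proof at all (it cites \cite{HK} and \cite{Zhang}); your kernel argument is exactly what lies behind those citations, with one small slip: after factoring out $e^{-c_0 s}$ the remaining multiplier $e^{-s(|\eta|^{\alpha}-c_0)}\widehat{\widetilde{\varphi_0}}(\eta)$ does \emph{not} have derivatives bounded uniformly in $s$ (at a point where $|\eta|^{\alpha}=c_0$ a first derivative produces a factor $s\nabla(|\eta|^{\alpha})$ with no compensating exponential decay). The fix is to factor out only $e^{-c_0 s/2}$: then the exponent $|\eta|^{\alpha}-c_0/2$ is bounded below by $c_0/2>0$ on the support, the polynomial-in-$s$ factors are absorbed, and you conclude with the slightly smaller constant $C^{-1}=c_0/2$. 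Part (2) is verbatim the paper's argument. For (3) you genuinely diverge: the paper approximates $f$ by $f_{\varepsilon}\in\mathscr{S}_0(\mathbb{R}^2)$ in $\dot{B}_{p,q}^s$ and applies the smoothing estimate of (2) with $(s_1,s_2)=(s-1,s)$ to get $Ct^{-1/\alpha}\|f_{\varepsilon}\|_{\dot{B}_{p,q}^{s-1}}\to 0$, whereas you use blockwise exponential decay plus dominated convergence in $\ell^q(\mathbb{Z})$. For $q<\infty$ both work, and yours is actually the more robust route: it needs no density of $\mathscr{S}_0$ and therefore also covers $p=\infty$, where the paper's one-line appeal to ``the density property'' breaks down.

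The loose end you flag at $q=\infty$ cannot be closed, because (3) is simply false in $\dot{B}_{p,\infty}^s(\mathbb{R}^2)$: take $f$ with $2^{sk}\|\Delta_k f\|_{L^p}\simeq 1$ for all $k\leqslant 0$ (a lacunary sum of rescaled bumps). Since the symbol $1-e^{-t|\xi|^{\alpha}}$ is $O(t2^{\alpha k})$ together with its rescaled derivatives on the annulus $|\xi|\sim 2^k$, one has $\|(1-e^{-t(-\Delta)^{\frac{\alpha}{2}}})\Delta_k f\|_{L^p}\leqslant Ct2^{\alpha k}\|\Delta_k f\|_{L^p}$, so for every fixed $t$ a sufficiently negative $k$ gives $2^{sk}\|e^{-t(-\Delta)^{\frac{\alpha}{2}}}\Delta_k f\|_{L^p}\geqslant \frac{1}{2}2^{sk}\|\Delta_k f\|_{L^p}$, and the $\sup_k$ never decays. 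Your proposed finite-frequency truncation fails for the same reason: the tail does not vanish in the $\ell^{\infty}$ norm. This is harmless for the paper --- (3) is only invoked in Lemmas \ref{t2-5} and \ref{t2-6}, where $q<\infty$ is assumed --- but the correct move is to restrict (3) to $q<\infty$ rather than to try to prove the $q=\infty$ case.
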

\begin{proof}
	(1) is proved in \cite{HK} and \cite{Zhang}.
	(2) is immediately obtained by (1) and
	\begin{equation*}\label{2.7.5}
		2^{s_2j}e^{-C^{-1}2^{\alpha j}t}\leqslant Ct^{-\frac{s_2-s_1}{\alpha}}2^{s_1j}.
	\end{equation*}
	Let us prove (3).
	The density property yields that for any $\varepsilon>0$, there exists a $f_{\varepsilon}\in \mathscr{S}_0(\mathbb{R}^2)$ such that
	$\|f_{\epsilon}-f\|_{\dot{B}_{p,q}^s}<\varepsilon$.
	Then, we see that
	\begin{equation*}
		\begin{split}
			\left\|e^{-t(-\Delta)^{\frac{\alpha}{2}}}f\right\|_{\dot{B}_{p,q}^s}
			&\leqslant
				\left\|e^{-t(-\Delta)^{\frac{\alpha}{2}}}(f-f_{\varepsilon})\right\|_{\dot{B}_{p,q}^s}
				+\left\|e^{-t(-\Delta)^{\frac{\alpha}{2}}}f_{\varepsilon}\right\|_{\dot{B}_{p,q}^s}\\
			&\leqslant
				C\|f-f_{\epsilon}\|_{\dot{B}_{p,q}^s}+Ct^{-\frac{1}{\alpha}}\|f_{\varepsilon}\|_{\dot{B}_{p,q}^{s-1}}\\
			&<C\varepsilon+Ct^{-\frac{1}{\alpha}}\|f_{\varepsilon}\|_{\dot{B}_{p,q}^{s-1}},
		\end{split}
	\end{equation*}
	which implies
	\begin{equation*}
		\limsup_{t\to \infty}\left\|e^{-t(-\Delta)^{\frac{\alpha}{2}}}f\right\|_{\dot{B}_{p,q}^s}
		\leqslant
		C\varepsilon.
	\end{equation*}
	Since $\varepsilon>0$ is arbitrary, the proof is completed.
\end{proof}
Next, we derive some bilinear estimates.
We first recall the definition and basic properties of the Bony paraproduct formula.
For $f,g\in \mathscr{S}_0(\mathbb{R}^2)$, we decompose the product $fg$ as
\begin{equation*}\label{2.8}
	fg=T_fg+R(f,g)+T_gf,
\end{equation*}
where
\begin{equation*}\label{2.9}
	T_fg:=\sum_{l\in \mathbb{Z}}S_lf\Delta_lg,
	\qquad R(f,g):=\sum_{l\in \mathbb{Z}}\sum_{|k-l|\leqslant2}\Delta_kf\Delta_lg.
\end{equation*}
Here, $S_lf$ is defined by
\begin{equation*}\label{2.10}
	S_lf:=\sum_{k\leqslant l-3}\Delta_kf,\qquad l\in \mathbb{Z}.
\end{equation*}
Then, considering the supports of the functions of the Fourier side, we have
\begin{equation*}\label{2.10.1}
	\Delta_jT_{f}g=\sum_{l:|l-j|\leqslant 3}\Delta_j(S_lf\Delta_lg),
	\qquad \Delta_jR(f,g)=\sum_{(k,l):\substack{\max\{j,k\}\geqslant l-3,\\ |k-l|\leqslant2}}\Delta_j(\Delta_kf\Delta_lg).
\end{equation*}
Using them, we have
for $T>0$, $1\leqslant p,q \leqslant \infty$ and $s_1,s_2\in \mathbb{R}$ with $s_1<0$ (if $q=1$, then $s_1\leqslant 0$) that
\begin{equation}\label{2.10.2}
	2^{(s_1+s_2-\frac{2}{p})j}\|\Delta_jT_{f}g\|_{L^{\infty}(0,T;L^p)}
	\leqslant C\|f\|_{\widetilde{L}^{\infty}(0,T\dot{B}_{p,q}^{s_1})}
	\sum_{|l-j|\leqslant 3}2^{s_2l}\|\Delta_lg\|_{L^{\infty}(0,T;L^p)}
\end{equation}
and it also holds for $1\leqslant p,q \leqslant \infty$ and $s_1,s_2\in \mathbb{R}$ with $s_1+s_2>0$
\begin{equation}\label{2.10.3}
	\|R(f,g)\|_{\widetilde{L}^{\infty}(0,T;\dot{B}_{p,q}^{s_1+s_2})}
	\leqslant C\|f\|_{\widetilde{L}^{\infty}(0,T;\dot{B}_{p,q}^{s_1})}
	\|g\|_{\widetilde{L}^{\infty}(0,T;\dot{B}_{p,q}^{s_2})}.
\end{equation}
See \cite{Bahouri} for the idea of the proof of these estimates.
From easy applications of (\ref{2.10.2}) and (\ref{2.10.3}),
we obtain the following lemma:
\begin{lemm}\label{t2-2}
	Let $2\leqslant p\leqslant \infty$ and $1\leqslant q \leqslant \infty$.
	Let $s_1,s_2\in \mathbb{R}$ satisfy $s_1+s_2>0$ and $s_1,s_2<2/p$.
	Then, there exists a positive constant $C=C(p,q,s_1,s_2)$
	such that
	\begin{equation*}\label{2.11}
		\|fg\|_{\widetilde{L}^{\infty}(0,T;\dot{B}_{p,q}^{s_1+s_2-\frac{2}{p}})}
		\leqslant
		C
		\|f\|_{\widetilde{L}^{\infty}(0,T;\dot{B}_{p,q}^{s_1})}
		\|g\|_{\widetilde{L}^{\infty}(0,T;\dot{B}_{p,q}^{s_2})}
	\end{equation*}
	holds for all $T>0$,
	$f\in \widetilde{L}^{\infty}(0,T;\dot{B}_{p,q}^{s_1}(\mathbb{R}^2))$
	and $g\in \widetilde{L}^{\infty}(0,T;\dot{B}_{p,q}^{s_2}(\mathbb{R}^2))$.
\end{lemm}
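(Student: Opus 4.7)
My plan is to apply the Bony paraproduct decomposition $fg=T_fg+T_gf+R(f,g)$ recalled just before the statement and to bound each of the three pieces separately in $\widetilde{L}^{\infty}(0,T;\dot{B}_{p,q}^{s_1+s_2-\frac{2}{p}})$.

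For the paraproducts $T_fg$ and $T_gf$, the idea is to place the low-frequency factor in $L^{\infty}$ via Bernstein's inequality. Since $s_1<2/p$, the geometric summation
\begin{equation*}
\|S_lf\|_{L^{\infty}}
\leqslant C\sum_{k\leqslant l-3}2^{\frac{2k}{p}}\|\Delta_kf\|_{L^p}
\leqslant C\,2^{(\frac{2}{p}-s_1)l}\|f\|_{\dot{B}_{p,q}^{s_1}}
\end{equation*}
converges, which is precisely the extension of (\ref{2.10.2}) needed to cover the case $s_1\geqslant 0$. Combining this with the localization $|l-j|\leqslant 3$ for $\Delta_j(S_lf\Delta_lg)$ and the Besov-norm expansion of $\Delta_lg$, then taking the $\ell^q(j)$ norm with weight $2^{(s_1+s_2-\frac{2}{p})j}$, yields the desired bound on $T_fg$. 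The estimate on $T_gf$ follows by the symmetric argument using $s_2<2/p$.

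For the remainder $R(f,g)=\sum_{|k-l|\leqslant 2}\Delta_kf\Delta_lg$, the Fourier support of $\Delta_kf\Delta_lg$ lies in a ball of radius of order $2^l$, so $\Delta_jR(f,g)=\sum_{l\geqslant j-3,\,|k-l|\leqslant 2}\Delta_j(\Delta_kf\Delta_lg)$. The crucial point is that $p\geqslant 2$ allows the Hölder estimate
\begin{equation*}
\|\Delta_kf\Delta_lg\|_{L^{p/2}}\leqslant\|\Delta_kf\|_{L^p}\|\Delta_lg\|_{L^p}
\end{equation*}
with no Bernstein loss; the hypothesis $s_1+s_2>0$ then closes a Young-type convolution in $l\geqslant j-3$ and yields $R(f,g)\in\widetilde{L}^{\infty}(0,T;\dot{B}_{p/2,q}^{s_1+s_2})$ with the required norm bound. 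Finally, the homogeneous Besov embedding $\dot{B}_{p/2,q}^{s_1+s_2}\hookrightarrow\dot{B}_{p,q}^{s_1+s_2-\frac{2}{p}}$, valid because $p\geqslant 2$ ensures $p/2\geqslant 1$ and the differentiability index is preserved, transfers this into the target space.

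The principal obstacle is the remainder. A direct attempt to estimate $\Delta_jR(f,g)$ in $L^p$ would introduce a Bernstein factor $2^{2l/p}$ that is summable over $l\geqslant j-3$ only under the strictly stronger assumption $s_1+s_2>2/p$. Working first in $L^{p/2}$ and then applying the Besov embedding, a route that fundamentally uses $p\geqslant 2$, is the device that recovers the sharp range $s_1+s_2>0$ assumed in the lemma.
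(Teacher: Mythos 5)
Your proof is correct and follows essentially the same route as the paper, which deduces the lemma from the Bony decomposition together with the paraproduct estimate (\ref{2.10.2}) and the remainder estimate (\ref{2.10.3}), leaving the details as ``easy applications''; you simply carry those applications out. Two of your details are in fact clarifications of the paper's statements: the bound $\|S_lf\|_{L^{\infty}}\leqslant C2^{(\frac{2}{p}-s_1)l}\|f\|_{\dot{B}_{p,q}^{s_1}}$ shows the paraproduct piece only needs $s_1<2/p$ rather than the $s_1<0$ written before (\ref{2.10.2}), and your $L^{p/2}$--H\"older-plus-embedding treatment of $R(f,g)$ makes explicit the $-2/p$ shift in the target index that the displayed form of (\ref{2.10.3}) omits but that the paper actually uses, e.g.\ in (\ref{2.13.4}).
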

By the standard argument of the proof of commutator estimates
(see for instance \cite{Bahouri}, \cite{Miura}), we get the following lemma:
\begin{lemm}\label{t2-3}
	Let $2\leqslant p\leqslant \infty$ and $1\leqslant q \leqslant \infty$.
	Let $s_1,s_2\in \mathbb{R}$ satisfy $s_1+s_2>0$, $0<s_1<1+2/p$ and $s_2<2/p$.
	Then, there exists a positive constant $C=C(p,q,s_1,s_2)$
	such that
	\begin{equation*}\label{2.12}
		\left\|\left\{2^{(s_1+s_2-\frac{2}{p})j}\|[f,\Delta_j]g\|_{L^{\infty}(0,T;L^p)}\right\}_{j\in \mathbb{Z}}\right\|_{l^{q}(\mathbb{Z})}
		\leqslant
		C
		\|f\|_{\widetilde{L}^{\infty}(0,T;\dot{B}_{p,q}^{s_1})}
		\|g\|_{\widetilde{L}^{\infty}(0,T;\dot{B}_{p,q}^{s_2})}
	\end{equation*}
	holds for all $T>0$,
	$f\in \widetilde{L}^{\infty}(0,T;\dot{B}_{p,q}^{s_1}(\mathbb{R}^2))$
	and $g\in \widetilde{L}^{\infty}(0,T;\dot{B}_{p,q}^{s_2}(\mathbb{R}^2))$.
\end{lemm}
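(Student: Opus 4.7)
The plan is to apply Bony's paraproduct decomposition to both $fg$ and $f\Delta_jg$, isolating a paraproduct commutator where cancellation can be exploited, together with two remainder pieces that are bounded directly by (\ref{2.10.2}) and (\ref{2.10.3}). From $fg=T_fg+T_gf+R(f,g)$ and $f\Delta_jg=T_f\Delta_jg+T_{\Delta_jg}f+R(f,\Delta_jg)$ one gets the algebraic identity
\begin{equation*}
[f,\Delta_j]g = [T_f,\Delta_j]g + \bigl(T_{\Delta_jg}f-\Delta_jT_gf\bigr) + \bigl(R(f,\Delta_jg)-\Delta_jR(f,g)\bigr),
\end{equation*}
which splits the commutator into three pieces; only the first truly requires cancellation.

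For the paraproduct commutator, the support properties reduce it to a finite sum over $|l-j|\leqslant 3$ of $[S_lf,\Delta_j]\Delta_lg$. Using the convolution kernel representation
\begin{equation*}
[S_lf,\Delta_j]h(x)=\int_{\mathbb{R}^2}\varphi_j(x-y)\bigl(S_lf(x)-S_lf(y)\bigr)h(y)\,dy,
\end{equation*}
a first-order Taylor expansion of $S_lf(x)-S_lf(y)$, and Young's inequality, I obtain
\begin{equation*}
\|[S_lf,\Delta_j]\Delta_lg\|_{L^p}\leqslant C\,2^{-j}\|\nabla S_lf\|_{L^\infty}\|\Delta_lg\|_{L^p}.
\end{equation*}
The $2^{-j}$ factor is the decisive gain from cancellation. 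Writing $\|\nabla S_lf\|_{L^\infty}\leqslant C\sum_{k\leqslant l-3}2^{k(1+2/p)}\|\Delta_kf\|_{L^p}$ via Bernstein, the inner sum is a geometric series that converges provided $s_1<1+2/p$, and its output is bounded by $C\,2^{l(1+2/p-s_1)}d_l\|f\|_{\dot{B}_{p,q}^{s_1}}$ for some $l^q$-summable sequence $\{d_l\}_l$. Multiplying by $2^{(s_1+s_2-2/p)j}$, using $|l-j|\leqslant 3$, and the Besov expansion $\|\Delta_lg\|_{L^p}\leqslant C\,2^{-ls_2}e_l\|g\|_{\dot{B}_{p,q}^{s_2}}$ closes the estimate after taking the $l^q$ norm in $j$.

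The two remainder pieces carry no cancellation but are bounded directly using the spectral support structure. For $\Delta_jT_gf$, a direct application of (\ref{2.10.2}) works, and uses $s_2<2/p$ so that $\|S_lg\|_{L^\infty}$ is summable through Bernstein. For $T_{\Delta_jg}f$, the support structure forces $l\geqslant j+2$, producing a tail sum $\sum_{l\geqslant j+2}\|\Delta_lf\|_{L^p}$ that converges only when $s_1>0$; this is where the lower bound on $s_1$ enters. For $R(f,\Delta_jg)-\Delta_jR(f,g)$ the resonance estimate (\ref{2.10.3}) applies under $s_1+s_2>0$, which is therefore unavoidable.

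The principal obstacle is the paraproduct commutator $[T_f,\Delta_j]g$: since the target regularity $s_1+s_2-2/p$ may be negative and one cannot afford to lose any derivative, the $2^{-j}$ gain from the Taylor expansion has to be extracted cleanly and balanced exactly against $\|\nabla S_lf\|_{L^\infty}$. Making this quantitative at the $l^q$ level is what pins down the upper bound $s_1<1+2/p$. Everything else reduces to standard bookkeeping with Bony's decomposition and the usual frequency-support lemmas, as in \cite{Bahouri,Miura}.
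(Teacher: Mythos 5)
Your argument is correct and is precisely the standard commutator estimate that the paper invokes without writing out (it only cites \cite{Bahouri} and \cite{Miura}): the Bony splitting of $[f,\Delta_j]g$ into $[T_f,\Delta_j]g$ plus the two remainder pieces, the first--order Taylor expansion on the kernel of $[S_lf,\Delta_j]$ yielding the gain $2^{-j}\|\nabla S_lf\|_{L^\infty}$, and the paraproduct/resonance bounds, with each hypothesis $s_1<1+2/p$, $s_1>0$, $s_2<2/p$, $s_1+s_2>0$ entering exactly where you place it. The only point left implicit is that every step is performed pointwise in $t$ before taking $L^\infty_t$ of each dyadic block, so the estimates pass to the $\widetilde{L}^{\infty}(0,T;\dot{B}_{p,q}^{s})$ norms; this is routine.
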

{The next lemma helps us to control the product term which will appear in equations of the perturbation
such as (\ref{3.7}), (\ref{3.47}) and (\ref{3.56}) below.}
\begin{lemm}\label{t2-4}
	Let $\lambda>0$, $\alpha>0$, $\beta \leqslant \alpha$,
	$2\leqslant p\leqslant \infty$ and $1\leqslant q \leqslant \infty$.
	Let $s_1,s_2\in \mathbb{R}$ satisfy $s_1+s_2>0$, $2/p<s_1<2/p+\alpha$ and $s_2<2/p$.
	Then, there exists a positive constant $C=C(\lambda,\alpha,\beta,p,q,s_1,s_2)$ such that
	\begin{equation}\label{2.13}
		\begin{split}
			&\left\|\left\{
				\int_0^T2^{\beta j} e^{-\lambda 2^{\alpha j}(T-\tau)}2^{(s_1+s_2-\frac{2}{p})j}
				\|\Delta_j(f(\tau) e^{-\tau(-\Delta)^{\frac{\alpha}{2}}}g)\|_{L^p}
				d\tau
			\right\}_{j\in \mathbb{Z}}\right\|_{l^q(\mathbb{Z})}\\
			&
			\leqslant
			CT^{1-\frac{\beta}{\alpha}-\frac{1}{\alpha}(s_1-\frac{2}{p})}
			\left\{
				\|f\|_{L^{\infty}(0,T;L^p)}
				+\left(1+T^{\frac{1}{\alpha}(s_1-\frac{2}{p})}\right)
				\|f\|_{\widetilde{L}^{\infty}(0,T;\dot{B}_{p,q}^{s_1})}
			\right\}
			\|g\|_{\dot{B}_{p,q}^{s_2}}
		\end{split}
	\end{equation}
	holds for all $T>0$,
	$f\in L^{\infty}(0,T;L^p(\mathbb{R}^2))\cap \widetilde{L}^{\infty}(0,T;\dot{B}_{p,q}^{s_1}(\mathbb{R}^2))$
	and $g\in \dot{B}_{p,q}^{s_2}(\mathbb{R}^2)$.

	In particular, if $\beta<\alpha$, then the following estimate holds:
	\begin{equation}\label{2.13.0.0}
		\begin{split}
			&\sum_{j\in \mathbb{Z}}
				\int_0^T2^{\beta j} e^{-\lambda 2^{\alpha j}(T-\tau)}
				\|f(\tau) e^{-\tau(-\Delta)^{\frac{\alpha}{2}}}g\|_{\dot{B}_{p,q}^{s_1+s_2-\frac{2}{p}}}
				d\tau\\
			&\qquad
			\leqslant
			CT^{1-\frac{\beta}{\alpha}-\frac{1}{\alpha}(s_1-\frac{2}{p})}
			\left\{
				\|f\|_{L^{\infty}(0,T;L^p)}
				+\left(1+T^{\frac{1}{\alpha}(s_1-\frac{2}{p})}\right)
				\|f\|_{\widetilde{L}^{\infty}(0,T;\dot{B}_{p,q}^{s_1})}
			\right\}
			\|g\|_{\dot{B}_{p,q}^{s_2}}.
		\end{split}
	\end{equation}
\end{lemm}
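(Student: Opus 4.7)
The strategy is to reduce to dyadic estimates via Bony's paraproduct, apply the semigroup bound from Lemma \ref{t2-1}(1) to the factor $e^{-\tau(-\Delta)^{\alpha/2}}g$, and then carry out the time integral block by block. Setting $h(\tau) := e^{-\tau(-\Delta)^{\alpha/2}}g$, I would decompose
\[
f(\tau)h(\tau) = T_{f(\tau)}h(\tau) + R(f(\tau),h(\tau)) + T_{h(\tau)}f(\tau),
\]
apply $\Delta_j$, and expand using the support identities (\ref{2.10.1}). The essential pointwise-in-$(\tau,j)$ inputs are Bernstein's inequality $\|\Delta_k u\|_{L^\infty} \leqslant C\, 2^{2k/p}\|\Delta_k u\|_{L^p}$ and the frequency-localized semigroup decay $\|\Delta_l h(\tau)\|_{L^p} \leqslant C e^{-c\, 2^{\alpha l}\tau}\|\Delta_l g\|_{L^p}$.

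For the paraproducts $T_f h$ and $R(f,h)$ the $f$-factor enters as $S_l f$ or $\Delta_k f$, and I would split $f = S_0 f + (I-S_0)f$. By Bernstein the low-frequency part is dominated by $\|f\|_{L^\infty(0,T;L^p)}$, while the high-frequency part is dominated by $\|f\|_{\widetilde{L}^\infty(0,T;\dot{B}_{p,q}^{s_1})}$ via the embedding available because $s_1 > 2/p$; this produces the two $f$-norms on the right-hand side of (\ref{2.13}). For the remaining paraproduct $T_h f$, one needs to bring $S_l h$ into $L^\infty$, which via Lemma \ref{t2-1}(2) costs $s_1 - 2/p$ derivatives on $h$ and generates the additional factor $T^{(s_1-2/p)/\alpha}$ multiplying $\|f\|_{\widetilde{L}^\infty \dot B^{s_1}_{p,q}}$.

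The main technical step is the time integral, where after localization every contribution is of the form
\[
\int_0^T e^{-\lambda 2^{\alpha j}(T-\tau)}\,e^{-c\, 2^{\alpha l}\tau}\,d\tau,
\]
with $l$ comparable to (or below) $j$. Splitting at $\tau = T/2$ gives the elementary bound $\min(T,\, c\, 2^{-\alpha \max(j,l)})$, which I would interpolate as $T^{1-\gamma}\cdot 2^{-\alpha\gamma \max(j,l)}$ with $\gamma$ tuned so that the leftover frequency weight is precisely $2^{s_2 j}$, reconstructing $\|g\|_{\dot B^{s_2}_{p,q}}$ after the $\ell^q$-summation in $j$. The hypotheses $s_1 > 2/p$, $s_1 < 2/p + \alpha$ and $\beta \leqslant \alpha$ keep $\gamma \in [0,1]$ for the relevant piece, and the $T$-power $1-\gamma = 1 - \beta/\alpha - (s_1 - 2/p)/\alpha$ is exactly the one stated. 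I expect the bookkeeping of the remainder $R(f,h)$, whose coupled double frequency sum must be matched against the external weight $e^{-\lambda 2^{\alpha j}(T-\tau)}$ without losing derivatives, to be the main obstacle.

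For the sharpened bound (\ref{2.13.0.0}), I would run the same dyadic estimates but leave the $j$-sum outside: on the high-frequency side, the strict inequality $\beta < \alpha$ provides the geometric decay $\sum_{j\gg 0} 2^{(\beta-\alpha)j} < \infty$ after the time integral absorbs a power $2^{-\alpha j}$ of the weight, while the low-frequency sum is controlled by the target Besov regularity already built into $\|f\,h\|_{\dot B^{s_1+s_2-2/p}_{p,q}}$. Consequently the same $T$-power survives the passage from the $\ell^q$-norm to the $\ell^1$-sum in $j$.
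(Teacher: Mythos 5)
Your skeleton matches the paper's: Bony decomposition $f h = T_f h + R(f,h) + T_h f$ with $h = e^{-\tau(-\Delta)^{\alpha/2}}g$, a low/high frequency split of $f$ to produce the two norms $\|f\|_{L^\infty L^p}$ and $\|f\|_{\widetilde L^\infty \dot B^{s_1}_{p,q}}$, and the semigroup smoothing on $g$ to absorb the excess $s_1-\tfrac{2}{p}$ derivatives. But your ``main technical step'' contains a genuine error. For $\Delta_j T_f h$ the frequency weight you must absorb from the time integral is $2^{\beta j}\cdot 2^{(s_1-\frac{2}{p})j}$, i.e.\ $\gamma = \frac{\beta}{\alpha}+\frac{1}{\alpha}(s_1-\frac{2}{p})$, and the hypotheses only give $\beta\leqslant\alpha$ and $s_1-\frac{2}{p}<\alpha$ \emph{separately}, so $\gamma$ can be anywhere up to $2$; your claim that the hypotheses keep $\gamma\in[0,1]$ is false. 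Once you have discarded the exponentials in favour of $\min(T,\,c\,2^{-\alpha j})$, the interpolation $\min(a,b)\leqslant a^{1-\gamma}b^{\gamma}$ fails for $\gamma>1$ (when $2^{\alpha j}T\geqslant 1$ the right-hand side is \emph{smaller} than the minimum), so the argument as written does not close precisely in the regime $\beta=\alpha$, $s_1-\frac{2}{p}$ near $\alpha$ that the application requires. The repair is either to keep the full product $e^{-\lambda 2^{\alpha j}(T-\tau)}e^{-c2^{\alpha j}\tau}\leqslant e^{-\min(\lambda,c)2^{\alpha j}T}$, which yields arbitrary powers, or to do what the paper does: convert the smoothing on $\Delta_l g$ into a time singularity via Lemma \ref{t2-1}(2), $2^{(s_1-\frac{2}{p})l}\|e^{-\tau(-\Delta)^{\alpha/2}}\Delta_l g\|_{L^p}\leqslant C\tau^{-\frac{1}{\alpha}(s_1-\frac{2}{p})}\|\Delta_l g\|_{L^p}$, and then use $\sup_j\int_0^T 2^{\beta j}e^{-\lambda 2^{\alpha j}(T-\tau)}\tau^{-\frac{1}{\alpha}(s_1-\frac{2}{p})}d\tau\leqslant CT^{1-\frac{\beta}{\alpha}-\frac{1}{\alpha}(s_1-\frac{2}{p})}$, where each exponent being individually below $1$ (resp.\ $\leqslant 1$) is all that is needed.

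Your treatment of the other two paraproduct pieces is also misdirected. The remainder $R(f,h)$, which you flag as the main unresolved obstacle, needs no semigroup smoothing and no coupled frequency analysis at all: the paper simply pulls $\|R(f,e^{-t(-\Delta)^{\alpha/2}}g)\|_{\widetilde L^\infty(0,T;\dot B^{s_1+s_2-2/p}_{p,q})}$ out of the time integral, applies (\ref{2.10.3}) (valid since $s_1+s_2>0$) together with $\|e^{-\tau(-\Delta)^{\alpha/2}}g\|_{\dot B^{s_2}_{p,q}}\leqslant C\|g\|_{\dot B^{s_2}_{p,q}}$, and bounds the remaining integral by $T^{1-\beta/\alpha}$. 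The same applies to $T_h f$: the standard paraproduct estimate with $s_2<\tfrac{2}{p}$ handles $\|S_l h\|_{L^\infty}$, and no derivatives are ``costed'' on $h$. The factor $\bigl(1+T^{\frac{1}{\alpha}(s_1-\frac{2}{p})}\bigr)$ in (\ref{2.13}) does not arise from any smoothing estimate, but merely from rewriting the larger power $T^{1-\beta/\alpha}$ of these two terms as $T^{1-\frac{\beta}{\alpha}-\frac{1}{\alpha}(s_1-\frac{2}{p})}\cdot T^{\frac{1}{\alpha}(s_1-\frac{2}{p})}$ so that all three contributions share a common prefactor. For (\ref{2.13.0.0}) the paper repeats the same three estimates with the $j$-sum outside, using $\sum_{j}\int_0^T 2^{\beta j}e^{-\lambda 2^{\alpha j}(T-\tau)}\tau^{-\gamma/\alpha}d\tau\leqslant CT^{1-\beta/\alpha-\gamma/\alpha}$ for $\beta<\alpha$, $\gamma<\alpha$, which is in the spirit of what you describe.
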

\begin{rem}\label{t2-4-0}
	Let $1/2<\alpha<1$ and $2\leqslant p < 4/(2\alpha-1)$.
	Then, it immediately follows from (\ref{2.13}) with $s_1=s_{\rm c}:=1+2/p-\alpha$ and the continuous embedding
 	$\widetilde{L}^{\infty}(0,T;\dot{B}_{p,1}^0(\mathbb{R}^2))\hookrightarrow L^{\infty}(0,T;L^p(\mathbb{R}^2))$ that
	\begin{equation}\label{2.13.0}
		\begin{split}
			&\left\|\left\{
				\int_0^T2^{\beta j} e^{-\lambda 2^{\alpha j}(T-\tau)}2^{(s_{\rm c}+s_2-\frac{2}{p})j}
				\|\Delta_j(f(\tau) e^{-\tau(-\Delta)^{\frac{\alpha}{2}}}g)\|_{L^p}
				d\tau
			\right\}_{j\in \mathbb{Z}}\right\|_{l^q(\mathbb{Z})}\\
			&\qquad
			\leqslant
			CT^{1-\frac{\beta}{\alpha}}
			(T^{-\frac{1-\alpha}{\alpha}}+1)
			\|f\|_{X_T^{p,q}}
			\|g\|_{\dot{B}_{p,q}^{s_2}},
		\end{split}
	\end{equation}
	where $X_T^{p,q}:=\widetilde{L}^{\infty}(0,T;\dot{B}_{p,1}^0(\mathbb{R}^2))
	\cap \widetilde{L}^{\infty}(0,T;\dot{B}_{p,q}^{s_{\rm c}}(\mathbb{R}^2))$.
	If $\beta<\alpha$, then (\ref{2.13.0.0}) yields that
	\begin{equation}\label{2.13.6.1}
		\begin{split}
			&\sum_{j\in \mathbb{Z}}
				\int_0^T2^{\beta j} e^{-\lambda 2^{\alpha j}(T-\tau)}
				\|f(\tau) e^{-\tau(-\Delta)^{\frac{\alpha}{2}}}g\|_{\dot{B}_{p,q}^{s_1+s_2-\frac{2}{p}}}
				d\tau\\
			&\qquad
			\leqslant
			CT^{1-\frac{\beta}{\alpha}}
			(T^{-\frac{1-\alpha}{\alpha}}+1)
			\|f\|_{X_T^{p,q}}
			\|g\|_{\dot{B}_{p,q}^{s_2}}.
		\end{split}
	\end{equation}
\end{rem}
	\begin{proof}[Proof of Lemma \ref{t2-4}]
		First, we prove (\ref{2.13})
		It follows from an inequality of (\ref{2.10.2}) type and (2) of Lemma \ref{t2-1} that
		\begin{equation}\label{2.13.1}
			\begin{split}
				&\int_0^T2^{\beta j} e^{-\lambda 2^{\alpha j}(T-\tau)}2^{(s_1+s_2-\frac{2}{p})j}
					\|\Delta_jT_{f(\tau)} e^{-\tau(-\Delta)^{\frac{\alpha}{2}}}g\|_{L^p}
					d\tau\\
				&\qquad\leqslant
					C\int_0^T2^{\beta j} e^{-\lambda 2^{\alpha j}(T-\tau)}
					\|f(\tau)\|_{\dot{B}_{p,1}^{\frac{2}{p}}}
					\sum_{|l-j|\leqslant 3}2^{((s_1-\frac{2}{p})+s_2)l}\|e^{-\tau(-\Delta)^{\frac{\alpha}{2}}}\Delta_lg\|_{L^p}
					d\tau\\
				&\qquad\leqslant
					C\int_0^T2^{\beta j} e^{-\lambda 2^{\alpha j}(T-\tau)}
					\tau^{-\frac{1}{\alpha}(s_1-\frac{2}{p})}
					d\tau
					\|f\|_{\widetilde{L}^{\infty}(0,T;\dot{B}_{p,1}^{\frac{2}{p}})}
					\sum_{|l-j|\leqslant 3}2^{s_2l}\|\Delta_lg\|_{L^p}.
			\end{split}
		\end{equation}
		By virtue of $s_1-2/p<\alpha$ and $\beta\leqslant \alpha$, it is easy to see that
		\begin{equation*}\label{2.13.2}
			\sup_{j\in \mathbb{Z}}
			\int_0^T2^{\beta j} e^{-\lambda 2^{\alpha j}(T-\tau)}
			\tau^{-\frac{1}{\alpha}(s_1- \frac{2}{p})}
			d\tau
			\leqslant
			CT^{1-\frac{\beta}{\alpha}-\frac{1}{\alpha}(s_1- \frac{2}{p})}.
		\end{equation*}
		Hence, taking $l^q(\mathbb{Z})$-norm of (\ref{2.13.1}) and using
		\begin{equation}\label{2.13.2.0}
			\begin{split}
				\|f\|_{\widetilde{L}^{\infty}(0,T;\dot{B}_{p,1}^{\frac{2}{p}})}
				&\leqslant
					\sum_{j\leqslant 0}2^{\frac{2}{p}j}\|\Delta_jf\|_{L^{\infty}(0,T;L^p)}
					+\left\|\left\{2^{(\frac{2}{p}-s_1)j}\right\}\right\|_{l^{\frac{q}{q-1}}(\mathbb{N})}
					\|f\|_{\widetilde{L}^{\infty}(0,T;\dot{B}_{p,q}^{s_1})}\\
				&\leqslant
					C\left(\|f\|_{L^{\infty}(0,T;L^p)}+\|f\|_{\widetilde{L}^{\infty}(0,T;\dot{B}_{p,q}^{s_1})}\right),
			\end{split}
		\end{equation}
		we obtain that
		\begin{equation}\label{2.13.2.1}
			\begin{split}
				&\left\|\left\{\int_0^T2^{\beta j} e^{-\lambda 2^{\alpha j}(T-\tau)}2^{(s_1+s_2-\frac{2}{p})j}
					\|\Delta_jT_{f(\tau)} e^{-\tau(-\Delta)^{\frac{\alpha}{2}}}g\|_{L^p}
					d\tau\right\}_{j\in \mathbb{Z}}\right\|_{l^q(\mathbb{Z})}\\
				&\quad\leqslant
				CT^{1-\frac{\beta}{\alpha}-\frac{1}{\alpha}(s_1-\frac{2}{p})}
				\left(\|f\|_{L^{\infty}(0,T;L^p)}+\|f\|_{\widetilde{L}^{\infty}(0,T;\dot{B}_{p,q}^{s_1})}\right)
				\left\|g\right\|_{\dot{B}_{p,q}^{s_2}}.
			\end{split}
		\end{equation}
		Since it holds
		\begin{equation}\label{2.13.3}
			\begin{split}
				&\int_0^T2^{\beta j} e^{-\lambda 2^{\alpha j}(T-\tau)}2^{(s_1+s_2-\frac{2}{p})j}
					\left\|\Delta_jR(f(\tau),e^{-\tau(-\Delta)^{\frac{\alpha}{2}}}g)\right\|_{L^p}d\tau\\
				&\leqslant\int_0^T2^{\beta j} e^{-\lambda 2^{\alpha j}(T-\tau)} d\tau
					2^{(s_1+s_2-\frac{2}{p})j}
					\left\|\Delta_jR(f,e^{-t(-\Delta)^{\frac{\alpha}{2}}}g)\right\|_{L^{\infty}_t(0,T;L^p)},
			\end{split}
		\end{equation}
		taking $l^q(\mathbb{Z})$-norm of (\ref{2.13.3}), we see by (\ref{2.10.3}) and (2) of Lemma \ref{t2-1} that
		\begin{equation}\label{2.13.4}
			\begin{split}
				&\left\|\left\{
					\int_0^T2^{\beta j} e^{-\lambda 2^{\alpha j}(T-\tau)}2^{(s_1+s_2-\frac{2}{p})j}
					\left\|\Delta_jR(f(\tau),e^{-\tau(-\Delta)^{\frac{\alpha}{2}}}g)\right\|_{L^p}
					d\tau
				\right\}_{j\in \mathbb{Z}}\right\|_{l^q(\mathbb{Z})}\\
				&\quad\leqslant
					\sup_{j\in \mathbb{Z}}\int_0^T2^{\beta j} e^{-\lambda 2^{\alpha j}(T-\tau)} d\tau
					\left\|R(f,e^{-t(-\Delta)^{\frac{\alpha}{2}}}g)\right\|_{\widetilde{L}^{\infty}_t(0,T;\dot{B}_{p,q}^{s_1+s_2-\frac{2}{p}})}\\
				&\quad\leqslant
					CT^{1-\frac{\beta}{\alpha}}
					\|f\|_{\widetilde{L}^{\infty}(0,T;\dot{B}_{p,q}^{s_1})}
					\left\|g\right\|_{\dot{B}_{p,q}^{s_2}}.
			\end{split}
		\end{equation}
		Here, we have used
		\begin{equation}\label{3.16}
			\sup_{j\in \mathbb{Z}}\int_0^T 2^{\beta j}e^{\lambda 2^{\alpha j}(T-\tau)} d\tau
			\leqslant
			CT^{1-\frac{\beta}{\alpha}},
			\qquad \beta \leqslant \alpha.
		\end{equation}
		Similarly, it follows from (\ref{2.10.2}) and (2) of Lemma \ref{t2-1} that
		\begin{equation}\label{2.13.5}
			\begin{split}
				&\left\|\left\{
					\int_0^T2^{\beta j} e^{-\lambda 2^{\alpha j}(T-\tau)}2^{(s_1+s_2-\frac{2}{p})j}
					\left\|\Delta_jT_{e^{-\tau(-\Delta)^{\frac{\alpha}{2}}}g}f(\tau)\right\|_{L^p}
					d\tau
				\right\}_{j\in \mathbb{Z}}\right\|_{l^q(\mathbb{Z})}\\
				&\quad\leqslant
					\sup_{j\in \mathbb{Z}}\int_0^T2^{\beta j} e^{-\lambda 2^{\alpha j}(T-\tau)} d\tau
					\left\|T_{e^{-t(-\Delta)^{\frac{\alpha}{2}}}g}f(t)\right\|_
					{\widetilde{L}^{\infty}_t(0,T;\dot{B}_{p,q}^{s_1+s_2-\frac{2}{p}})}\\
				&\quad\leqslant
					CT^{1-\frac{\beta}{\alpha}}
					\|f\|_{\widetilde{L}^{\infty}(0,T;\dot{B}_{p,q}^{s_1})}
					\left\|e^{-t(-\Delta)^{\frac{\alpha}{2}}}g\right\|_{\widetilde{L}^{\infty}(0,T;\dot{B}_{p,q}^{s_2})}\\
				&\quad\leqslant
					CT^{1-\frac{\beta}{\alpha}}
					\|f\|_{\widetilde{L}^{\infty}(0,T;\dot{B}_{p,q}^{s_1})}
					\left\|g\right\|_{\dot{B}_{p,q}^{s_2}}.
			\end{split}
		\end{equation}
		Commbining (\ref{2.13.2.1}), (\ref{2.13.4}) and (\ref{2.13.5}),
		we complete the proof of (\ref{2.13.0}).
		Next, we show (\ref{2.13.0.0}).
		By similar inequality to (\ref{2.10.2}) and (2) of Lemma \ref{t2-1}, we see that
	\begin{equation}\label{2.13.7}
		\begin{split}
			&\sum_{j\in \mathbb{Z}}
				\int_0^T2^{\beta j} e^{-\lambda 2^{\alpha j}(T-\tau)}
				\|T_{f(\tau)} e^{-\tau(-\Delta)^{\frac{\alpha}{2}}}g\|_{\dot{B}_{p,q}^{s_1+s_2-\frac{2}{p}}}
				d\tau\\
			&\quad\leqslant
				C\sum_{j\in \mathbb{Z}}
				\int_0^T2^{\beta j} e^{-\lambda 2^{\alpha j}(T-\tau)}\tau^{-\frac{1}{\alpha}(s_1-\frac{2}{p})}d\tau
				\|f\|_{\widetilde{L}^{\infty}(0,T;\dot{B}_{p,1}^{\frac{2}{p}})}
				\|g\|_{\dot{B}_{p,q}^{s_2}}\\
			&\quad\leqslant
				CT^{1-\frac{\beta}{\alpha}-\frac{1}{\alpha}(s_1-\frac{2}{p})}
				\left(\|f\|_{L^{\infty}(0,T;L^p)}+\|f\|_{\widetilde{L}^{\infty}(0,T;\dot{B}_{p,q}^{s_1})}\right)
				\left\|g\right\|_{\dot{B}_{p,q}^{s_2}}.
		\end{split}
	\end{equation}
	We also obtain from (\ref{2.10.3}) and (2) of Lemma \ref{t2-1} that
	\begin{equation}\label{2.13.8}
		\begin{split}
			&\sum_{j\in \mathbb{Z}}\int_0^T2^{\beta j} e^{-\lambda 2^{\alpha j}(T-\tau)}
				\left\|R(f(\tau),e^{-\tau(-\Delta)^{\frac{\alpha}{2}}}g)\right\|_{\dot{B}_{p,q}^{s_1+s_2-\frac{2}{p}}}d\tau\\
			&\leqslant
				\sum_{j\in \mathbb{Z}}\int_0^T2^{\beta j} e^{-\lambda 2^{\alpha j}(T-\tau)} d\tau
				\left\|R(f,e^{-\tau(-\Delta)^{\frac{\alpha}{2}}}g)\right\|_{\widetilde{L}^{\infty}(0,T;\dot{B}_{p,q}^{s_1+s_2-\frac{2}{p}})}\\
			&\leqslant
				CT^{1-\frac{\beta}{\alpha}}
				\|f\|_{\widetilde{L}^{\infty}(0,T;\dot{B}_{p,q}^{s_1})}
				\left\|g\right\|_{\dot{B}_{p,q}^{s_2}}.
		\end{split}
	\end{equation}
	Here, we have used the following inequalities in (\ref{2.13.7}) and (\ref{2.13.8}):
	\begin{equation}\label{2.13.8.1}
		\begin{split}
			&\sum_{j\in \mathbb{Z}}\int_0^T2^{\beta j}e^{-\lambda 2^{\alpha j}(T-\tau)}\tau^{-\frac{\gamma}{\alpha}} d\tau
				\leqslant CT^{1-\frac{\beta}{\alpha}-\frac{\gamma}{\alpha}},
				\qquad \beta<\alpha,\ \gamma<\alpha,\\
			&\sum_{j\in \mathbb{Z}}\int_0^T2^{\beta j}e^{-\lambda 2^{\alpha j}(T-\tau)} d\tau
				\leqslant CT^{1-\frac{\beta}{\alpha}},
				\qquad \beta<\alpha.
		\end{split}
	\end{equation}
	Similarly, we have
	\begin{equation}\label{2.13.9}
		\begin{split}
			&\sum_{j\in \mathbb{Z}}
				\int_0^T2^{\beta j} e^{-\lambda 2^{\alpha j}(T-\tau)}2^{(s_1+s_2-\frac{2}{p})j}
				\left\|T_{e^{-\tau(-\Delta)^{\frac{\alpha}{2}}}g}f(\tau)\right\|_{\dot{B}_{p,q}^{s_1+s_2-\frac{2}{p}}}
				d\tau\\
			&\quad\leqslant
				CT^{1-\frac{\beta}{\alpha}}
				\|f\|_{\widetilde{L}^{\infty}(0,T;\dot{B}_{p,q}^{s_1})}
				\left\|g\right\|_{\dot{B}_{p,q}^{s_2}}.
		\end{split}
	\end{equation}
	Hence, we complete the proof by commbining (\ref{2.13.7}), (\ref{2.13.8}) and (\ref{2.13.9}).
\end{proof}
To derive some estimates for initial data related to a time periodic solution in the proof of the main results,
we introduce the following two lemmas.
\begin{lemm}\label{t2-5}
	Let $\alpha>0$, $1\leqslant p \leqslant \infty$, $1\leqslant q< \infty$ and $s\in \mathbb{R}$.
	Then, for any $f\in \dot{B}_{p,q}^s(\mathbb{R}^2)\cap \dot{B}_{p,q}^{s-\alpha}(\mathbb{R}^2)$,
	the series
	\begin{equation}\label{2.14}
		u=\sum_{k=0}^{\infty} e^{-Tk(-\Delta)^{\frac{\alpha}{2}}}f
	\end{equation}
	converges in $\dot{B}_{p,q}^s(\mathbb{R}^2)$ and $u$ satisfies
	\begin{equation}\label{2.15}
		(1-e^{-T(-\Delta)^{\frac{\alpha}{2}}})u=f \qquad {\rm in}\ \dot{B}_{p,q}^s(\mathbb{R}^2).
	\end{equation}
	Moreover, there exists a positive constant $C=C(\alpha)$ such that
	\begin{equation}\label{2.16}
		\|u\|_{\dot{B}_{p,q}^s}
		\leqslant
		C
		\left(
			T^{-1}\|f\|_{\dot{B}_{p,q}^{s-\alpha}}
			+\|f\|_{\dot{B}_{p,q}^s}
		\right).
	\end{equation}
\end{lemm}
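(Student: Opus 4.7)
The plan is to work frequency-block by frequency-block, using the uniform decay of the semigroup on each Littlewood--Paley piece (Lemma~\ref{t2-1}(1)) to sum the geometric series in $k$, and then to split the resulting factor into low- and high-frequency contributions to recover the estimate (\ref{2.16}).

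First, for each $j\in\mathbb{Z}$, part (1) of Lemma~\ref{t2-1} gives
\begin{equation*}
\|\Delta_j e^{-Tk(-\Delta)^{\alpha/2}}f\|_{L^p}\leqslant C e^{-C^{-1}2^{\alpha j}Tk}\|\Delta_jf\|_{L^p},
\end{equation*}
so summing the geometric series over $k\geqslant 0$ yields
\begin{equation*}
\sum_{k=0}^{\infty}\|\Delta_j e^{-Tk(-\Delta)^{\alpha/2}}f\|_{L^p}
\leqslant \frac{C}{1-e^{-C^{-1}2^{\alpha j}T}}\,\|\Delta_jf\|_{L^p}.
\end{equation*}
The tail being geometrically small at every fixed $j$, the partial sums are Cauchy in $L^p$, so $\Delta_j u$ is well-defined in $L^p$. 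The elementary bound $1/(1-e^{-x})\leqslant 1+1/x$ for $x>0$ then gives
\begin{equation*}
2^{sj}\|\Delta_j u\|_{L^p}
\leqslant C\,2^{sj}\|\Delta_jf\|_{L^p} + CT^{-1}\,2^{(s-\alpha)j}\|\Delta_jf\|_{L^p}.
\end{equation*}
Taking $\ell^q(\mathbb{Z})$-norms and applying Minkowski's inequality yields (\ref{2.16}); finiteness of the right-hand side together with $q<\infty$ lets me run dominated convergence in $\ell^q(\mathbb{Z})$ to conclude that the series $u_N := \sum_{k=0}^{N} e^{-Tk(-\Delta)^{\alpha/2}}f$ actually converges to $u$ in $\dot{B}_{p,q}^s(\mathbb{R}^2)$.

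For the functional equation (\ref{2.15}), I would exploit the telescoping identity
\begin{equation*}
(1-e^{-T(-\Delta)^{\alpha/2}})\,u_N = f - e^{-T(N+1)(-\Delta)^{\alpha/2}}f.
\end{equation*}
The left-hand side converges to $(1-e^{-T(-\Delta)^{\alpha/2}})u$ in $\dot{B}_{p,q}^s(\mathbb{R}^2)$ by the boundedness of the multiplier $1-e^{-T(-\Delta)^{\alpha/2}}$ (consequence of Lemma~\ref{t2-1}(1)), while the last term on the right tends to $0$ in $\dot{B}_{p,q}^s(\mathbb{R}^2)$ by part (3) of Lemma~\ref{t2-1}, which applies precisely because $q<\infty$ guarantees density of $\mathscr{S}_0$.

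The main point to get right is the low/high frequency dichotomy packaged in the inequality $1/(1-e^{-x})\leqslant 1+1/x$: on frequencies $2^{\alpha j}T\gtrsim 1$ the geometric factor is $O(1)$, producing the $\|f\|_{\dot{B}_{p,q}^s}$ contribution, whereas on $2^{\alpha j}T\ll 1$ it blows up like $(2^{\alpha j}T)^{-1}$, which is exactly the source of the $T^{-1}\|f\|_{\dot{B}_{p,q}^{s-\alpha}}$ term and the reason $f\in\dot{B}_{p,q}^{s-\alpha}$ must be assumed. No step is genuinely delicate beyond this bookkeeping.
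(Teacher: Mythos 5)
Your proof is correct and follows essentially the same route as the paper: blockwise geometric summation via Lemma~\ref{t2-1}(1), a low/high frequency split of the factor $(1-e^{-C^{-1}2^{\alpha j}T})^{-1}$ (which you package as the single inequality $1/(1-e^{-x})\leqslant 1+1/x$ where the paper does an explicit case distinction over $2^{\alpha j}T\lessgtr 1$), dominated convergence in $\ell^q$ for the convergence of the series, and the telescoping identity plus Lemma~\ref{t2-1}(3) for the functional equation. No substantive differences.
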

\begin{proof}
	Let $m,n\in \mathbb{N}$ satisfy $m<n$. Then, it follows from (1) of Lemma \ref{t2-1} that
	\begin{equation}\label{2.16.1}
		\begin{split}
			\left\|
				\sum_{k=m}^n e^{-Tk(-\Delta)^{\frac{\alpha}{2}}}f
			\right\|_{\dot{B}_{p,q}^s}^q
			&=\sum_{j\in \mathbb{Z}}
			\left(
				2^{sj}
				\left\|
					\sum_{k=m}^n e^{-Tk(-\Delta)^{\frac{\alpha}{2}}}\Delta_jf
				\right\|_{L^p}
			\right)^q\\
			&\leqslant
			\sum_{j\in \mathbb{Z}}
 			\left(
 				2^{sj}
 				\|\Delta_jf\|_{L^p}
 				\sum_{k=m}^n Ce^{-C^{-1}T2^{\alpha j}k}
 			\right)^q,
		\end{split}
	\end{equation}
	where $C$ is the same constant as in (1) of Lemma \ref{t2-1}.
	Since it holds
	\begin{equation}\label{2.16.2}
		\begin{split}
			&\sum_{j\in \mathbb{Z}}
			\left(
				2^{sj}
				\|\Delta_jf\|_{L^p}
				\frac{C}{1-e^{-C^{-1}2^{\alpha j}T}}
			\right)^q\\
			&\quad=
			\sum_{j:2^{\alpha j}T<1}
			\left(
				2^{sj}
				\|\Delta_jf\|_{L^p}
				\frac{C^{-1}2^{\alpha j}T}{1-e^{-C^{-1}2^{\alpha j}T}}\cdot C^2 2^{-\alpha j}T^{-1}
			\right)^q\\
			&\qquad+
			\sum_{j:2^{\alpha j}T\geqslant 1}
			\left(
				2^{sj}
				\|\Delta_jf\|_{L^p}
				\frac{C}{1-e^{-C^{-1}2^{\alpha j}T}}
			\right)^q
			\\
			&\quad\leqslant
			\left(
				\frac{C}{1-e^{-C^{-1}}}
			\right)^q
			\left(
				T^{-q}\|f\|_{\dot{B}_{p,q}^{s-\alpha}}^q+\|f\|_{\dot{B}_{p,q}^s}^q
			\right)\\
			&\quad\leqslant
			\left(
				\frac{2C}{1-e^{-C^{-1}}}
			\right)^q
			\left(
				T^{-1}\|f\|_{\dot{B}_{p,q}^{s-\alpha}}+\|f\|_{\dot{B}_{p,q}^s}\right)^q
			<\infty,
		\end{split}
	\end{equation}
	 we have
	 \begin{equation*}\label{2.16.3}
	 		\begin{split}
				2^{sj}
 				\|\Delta_jf\|_{L^p}
 				\sum_{k=m}^n Ce^{-C^{-1}T2^{\alpha j}k}
				&\leqslant
				2^{sj}
 				\|\Delta_jf\|_{L^p}
 				\sum_{k=0}^{\infty} Ce^{-C^{-1}T2^{\alpha j}k}\\
				&=
				2^{sj}
 				\|\Delta_jf\|_{L^p}
 				\frac{C}{1-e^{-C^{-1}2^{\alpha j}T}}\in l^q(\mathbb{Z}).
	 		\end{split}
	 \end{equation*}
	 Hence, it follows from  (\ref{2.16.1}) and the dominated convergence theorem that
	 \begin{equation*}\label{2.16.4}
		 \limsup_{n,m\to \infty}\left\|
			 \sum_{k=m}^n e^{-Tk(-\Delta)^{\frac{\alpha}{2}}}f
		 \right\|^q_{\dot{B}_{p,q}^s}
		 \leqslant
		 \sum_{j\in \mathbb{Z}}
		 \left(
			 2^{sj}
			 \|\Delta_jf\|_{L^p}
			 \lim_{m,n\to \infty}
			 \sum_{k=m}^n Ce^{-C^{-1}T2^{\alpha j}k}
		 \right)^q
		 =0.
	 \end{equation*}
	 Thus, the series (\ref{2.14}) converges in $\dot{B}_{p,q}^s(\mathbb{R}^2)$ and
	 we find that $u$ satisfies (\ref{2.16}) by
	 \begin{equation*}\label{2.16.5}
	 		\begin{split}
				\|u\|_{\dot{B}_{p,q}^s}^q
				&=\sum_{j\in \mathbb{Z}}
				\left(
					2^{sj}
					\left\|
						\sum_{k=0}^{\infty} e^{-Tk(-\Delta)^{\frac{\alpha}{2}}}\Delta_jf
					\right\|_{L^p}
				\right)^q\\
				&\leqslant
				\sum_{j\in \mathbb{Z}}
				\left(
					2^{sj}
					\sum_{k=0}^{\infty}
					\left\|
						e^{-Tk(-\Delta)^{\frac{\alpha}{2}}}\Delta_jf
					\right\|_{L^p}
				\right)^q\\
				&\leqslant\sum_{j\in \mathbb{Z}}
				\left(
					2^{sj}
					\|\Delta_jf\|_{L^p}
					\sum_{k=0}^{\infty} Ce^{-C^{-1}T2^{\alpha j}k}
				\right)^q\\
				&\leqslant
				\left(
					\frac{2C}{1-e^{-C^{-1}}}
				\right)^q
				\left(
					T^{-1}\|f\|_{\dot{B}_{p,q}^{s-\alpha}}+\|f\|_{\dot{B}_{p,q}^s}
				\right)^q,
	 		\end{split}
	\end{equation*}
	where we have used (\ref{2.16.2}).
	Finally, we show (\ref{2.15}).
	Let
 	\begin{equation*}\label{2.18.2}
 		u_N:=\sum_{k=0}^{N-1}e^{-Tk(-\Delta)^{\frac{\alpha}{2}}}f, \qquad N\in \mathbb{N}.
 	\end{equation*}
 	Note that $u_N$ converges to $u$ in $\dot{B}_{p,q}^s(\mathbb{R}^2)$ as $N\to \infty$.
 	By a simple calculation, we see that
 	\begin{equation}\label{2.18.3}
 		(1-e^{-T(-\Delta)^{\frac{\alpha}{2}}})u_N=f-e^{-TN(-\Delta)^{\frac{\alpha}{2}}}f.
 	\end{equation}
 	Here, it follows from (2) of Lemma \ref{t2-1} that
 	\begin{equation}\label{2.18.4}
 		\begin{split}
 			&\left\|(1-e^{-T(-\Delta)^{\frac{\alpha}{2}}})u_N
 				-(1-e^{-T(-\Delta)^{\frac{\alpha}{2}}})u\right\|_{\dot{B}_{p,q}^s}\\
 			&\qquad \leqslant \|u-u_N\|_{\dot{B}_{p,q}^s}
 				+\left\|e^{-T(-\Delta)^{\frac{\alpha}{2}}}(u-u_N)\right\|_{\dot{B}_{p,q}^s}\\
 			&\qquad \leqslant C\|u-u_N\|_{\dot{B}_{p,q}^s}\\
 			&\qquad \to 0
 		\end{split}
 	\end{equation}
 	as $N\to \infty$ and it holds by (3) of Lemma \ref{t2-1} that
 	\begin{equation}\label{2.18.5}
 		\left\|e^{-TN(-\Delta)^{\frac{\alpha}{2}}}f\right\|_{\dot{B}_{p,q}^s}\to 0
 	\end{equation}
 	as $N\to \infty$.
 	Hence, letting $N\to \infty$ in (\ref{2.18.3}) by (\ref{2.18.4}) and (\ref{2.18.5}), we find that $u$ satisfies (\ref{2.15}).
 	This completes the proof.
\end{proof}

\begin{lemm}\label{t2-6}
	Let $T>0$, $\alpha>0$, $1\leqslant p\leqslant \infty$, $1\leqslant q <\infty$ and $s<2/p$.
	Then, for any $T$-time periodic function $F$ satisfying
	\begin{equation*}
		f(t):=\int_0^t e^{-(t-\tau)(-\Delta)^{\frac{\alpha}{2}}}F(\tau) d\tau
		\in BC((0,\infty); \dot{B}_{p,q}^{s-\alpha}(\mathbb{R}^2)\cap\dot{B}_{p,q}^s(\mathbb{R}^2)),
	\end{equation*}
	there exists a unique element $u_0\in \dot{B}_{p,q}^s(\mathbb{R}^2)$ such that the function
	\begin{equation*}\label{2.19}
		u(t)=e^{-t(-\Delta)^{\frac{\alpha}{2}}}u_0+f(t),\qquad t\geqslant0
	\end{equation*}
	is $T$-time periodic.
	Moreover, $u_0$ satisfies
	\begin{equation}\label{2.20}
		\|u_0\|_{\dot{B}_{p,q}^s}
		\leqslant
		C\left(
		T^{-1}\|f(T)\|_{\dot{B}_{p,q}^{s-\alpha}}
		+\|f(T)\|_{\dot{B}_{p,q}^{s}}
		\right),
	\end{equation}
	where $C$ is the same constant as in Lemma \ref{t2-5}.
\end{lemm}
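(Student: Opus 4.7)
The plan is to reduce the construction of the periodic $u$ to solving the linear equation $(1-e^{-T(-\Delta)^{\alpha/2}})u_0 = f(T)$ and then invoking Lemma \ref{t2-5}. First I would observe that $u(t)=e^{-t(-\Delta)^{\alpha/2}}u_0+f(t)$ is $T$-periodic if and only if $u(T)=u(0)$, i.e. $u_0 - e^{-T(-\Delta)^{\alpha/2}}u_0 = f(T)$. This is because the change of variables $\sigma=\tau-T$ together with the $T$-periodicity of $F$ yields the identity
\begin{equation*}
	f(t+T) = e^{-t(-\Delta)^{\alpha/2}}f(T) + f(t), \qquad t\geqslant 0,
\end{equation*}
so that $u(t+T) = e^{-t(-\Delta)^{\alpha/2}}(e^{-T(-\Delta)^{\alpha/2}}u_0 + f(T)) + f(t)$, and this equals $u(t)$ for all $t\geqslant 0$ exactly when the displayed equation for $u_0$ holds.

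Having reduced the question to the linear functional equation $(1-e^{-T(-\Delta)^{\alpha/2}})u_0 = f(T)$ in $\dot{B}_{p,q}^s(\mathbb{R}^2)$, I would apply Lemma \ref{t2-5} with the right-hand side $f(T)\in \dot{B}_{p,q}^{s-\alpha}(\mathbb{R}^2)\cap \dot{B}_{p,q}^s(\mathbb{R}^2)$ (which is given by hypothesis on $f$). This directly provides the candidate
\begin{equation*}
	u_0 = \sum_{k=0}^{\infty} e^{-Tk(-\Delta)^{\alpha/2}} f(T),
\end{equation*}
convergence in $\dot{B}_{p,q}^s$, the identity $(1-e^{-T(-\Delta)^{\alpha/2}})u_0 = f(T)$, and the claimed bound (\ref{2.20}).

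For uniqueness, suppose $u_0,u_0'\in \dot{B}_{p,q}^s(\mathbb{R}^2)$ both produce $T$-periodic $u$. Then $w:=u_0-u_0'$ satisfies $(1-e^{-T(-\Delta)^{\alpha/2}})w=0$, hence $\Delta_j w = e^{-T(-\Delta)^{\alpha/2}}\Delta_j w$ for every $j\in\mathbb{Z}$ (since the semigroup is a Fourier multiplier and commutes with $\Delta_j$). Iterating this identity and applying (1) of Lemma \ref{t2-1} gives $\|\Delta_j w\|_{L^p} \leqslant Ce^{-C^{-1}Tk 2^{\alpha j}}\|\Delta_j w\|_{L^p}$ for all $k\in\mathbb{N}$, hence $\Delta_j w = 0$ for every $j$. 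Since $s<2/p$, the condition quoted in the preliminaries lets us identify $w$ with $\sum_{j\in\mathbb{Z}}\Delta_j w$ in $\mathscr{S}'(\mathbb{R}^2)$, so $w=0$.

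The only non-routine point is the uniqueness step, where the restriction $s<2/p$ becomes essential: it is what allows to recover $w$ from its Littlewood--Paley pieces and conclude injectivity of $1-e^{-T(-\Delta)^{\alpha/2}}$ on $\dot{B}_{p,q}^s(\mathbb{R}^2)$. The existence part and the norm bound are then a direct transcription of Lemma \ref{t2-5}, and the periodicity verification is a one-line computation using the semigroup property and the $T$-periodicity of $F$.
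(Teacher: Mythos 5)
Your proposal is correct and, for the existence part, follows exactly the paper's route: reduce periodicity to the fixed-point equation $(1-e^{-T(-\Delta)^{\frac{\alpha}{2}}})u_0=f(T)$ via the shift identity $f(t+T)=f(t)+e^{-t(-\Delta)^{\frac{\alpha}{2}}}f(T)$, then invoke Lemma \ref{t2-5} for the candidate $u_0=\sum_{k\geqslant 0}e^{-Tk(-\Delta)^{\frac{\alpha}{2}}}f(T)$, its convergence, and the bound (\ref{2.20}). The only place you diverge is the uniqueness step: the paper notes that $u_0-v_0=e^{-NT(-\Delta)^{\frac{\alpha}{2}}}(u_0-v_0)$ for every $N$ and then applies the decay statement (3) of Lemma \ref{t2-1} (which rests on density of $\mathscr{S}_0$ and hence on $q<\infty$), whereas you iterate the identity on each Littlewood--Paley block and use the pointwise-in-$j$ decay from (1) of Lemma \ref{t2-1} to kill $\Delta_j w$ directly. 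Both arguments are valid; yours is marginally more elementary and does not need $q<\infty$ for the uniqueness step (though that hypothesis is still required for the existence part through Lemma \ref{t2-5}). One small remark: since $\dot{B}_{p,q}^s(\mathbb{R}^2)$ is defined here as a subspace of $\mathscr{S}_0'(\mathbb{R}^2)$ on which $\|\cdot\|_{\dot{B}_{p,q}^s}$ is a genuine norm, the vanishing of all $\Delta_j w$ already forces $w=0$; the realization under $s<2/p$ that you quote is not actually needed at that point.
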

\begin{proof}
	By Lemma \ref{t2-5}, the series
	\begin{equation*}\label{2.23}
		u_0:=\sum_{k=0}^{\infty} e^{-Tk(-\Delta)^{\frac{\alpha}{2}}}f(T)
	\end{equation*}
	converges in $\dot{B}_{p,q}^s(\mathbb{R}^2)$ and $u_0$ satisfies (\ref{2.20}) and
	\begin{equation}\label{2.23.1}
			(1-e^{-T(-\Delta)^{\frac{\alpha}{2}}})u_0=f(T).
	\end{equation}
	By the periodicity of $F$, we have
	\begin{equation}\label{2.24}
		f(t+T)=f(t)+e^{-t(-\Delta)^{\frac{\alpha}{2}}}f(T), \qquad t>0.
	\end{equation}
	Therefore, it follows from (\ref{2.23.1}) and ({\ref{2.24}}) that
	\begin{equation*}\label{2.25}
		\begin{split}
			u(t+T)
			&=e^{-t(-\Delta)^{\frac{\alpha}{2}}}e^{-T(-\Delta)^{\frac{\alpha}{2}}}u_0+f(t+T)\\
			&=e^{-t(-\Delta)^{\frac{\alpha}{2}}}(u_0-f(T))+f(t)+e^{-t(-\Delta)^{\frac{\alpha}{2}}}f(T)\\
			&=u(t)
		\end{split}
	\end{equation*}
	for all $t>0$.
	Hence, $u(t)$ is $T$-time periodic.
	Next, we prove the uniqueness.
	Let $v_0$ be an arbitrary element of $\dot{B}_{p,q}^s(\mathbb{R}^2)$ such that $v(t):=e^{-t(-\Delta)^{\frac{\alpha}{2}}}v_0+f(t)$ is a $T$-time periodic function.
	Then, since $u_0-v_0=u(NT)-v(NT)=e^{-NT(-\Delta)^{\frac{\alpha}{2}}}(u_0-v_0)$ holds for all $N\in \mathbb{N}$ by the periodicity, we obtain by (3) of Lemma \ref{t2-1} that
	\begin{equation*}\label{2.26}
		\|u_0-v_0\|_{\dot{B}_{p,q}^{s}}
		=\left\|e^{-NT(-\Delta)^{\frac{\alpha}{2}}}(u_0-v_0)\right\|_{\dot{B}_{p,q}^s}
		\to 0
	\end{equation*}
	as $N\to \infty$. Therefore, we have $u_0=v_0$ in $\dot{B}_{p,q}^s(\mathbb{R}^2)$ and this completes the proof.
\end{proof}

Finally, we recall a positivity lemma for the $L^p$-energy of the fractional dissipation:
\begin{lemm}[\cite{CMZ2},\cite{Wu}]\label{t2-7}
	Let $0<\alpha\leqslant 2$ and $2\leqslant p <\infty$.
	Then, there exists a positive constant $\lambda=\lambda(\alpha,p)$
	such that
	\begin{equation*}\label{2.27}
		\int_{\mathbb{R}^2} |\Delta_j f(x)|^{p-2}\Delta_jf(x)(-\Delta)^{\frac{\alpha}{2}}\Delta_jf(x) dx
		\geqslant
		\lambda 2^{\alpha j}\|\Delta_jf\|_{L^p}^p
	\end{equation*}
	for all $j\in \mathbb{Z}$ and $f\in \mathscr{S}'_0(\mathbb{R}^2)$ with
	$\Delta_jf \in L^p(\mathbb{R}^2)$.
\end{lemm}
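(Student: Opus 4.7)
The plan is to prove this by reducing to the model case $j=0$ via scaling, and then establishing the desired constant-lower bound through a symmetrization argument based on the singular-integral representation of $(-\Delta)^{\alpha/2}$ together with an elementary monotonicity inequality for $a\mapsto|a|^{p-2}a$. For the scaling step, the Fourier-multiplier definition yields $(-\Delta)^{\alpha/2}\bigl(g(\lambda\,\cdot)\bigr)(x)=\lambda^{\alpha}\bigl((-\Delta)^{\alpha/2}g\bigr)(\lambda x)$ for each $\lambda>0$, so after the substitution $y=\lambda x$ one has
$$\frac{\int_{\mathbb{R}^{2}}|g(\lambda\,\cdot)|^{p-2}g(\lambda\,\cdot)(-\Delta)^{\alpha/2}\bigl(g(\lambda\,\cdot)\bigr)\,dx}{\|g(\lambda\,\cdot)\|_{L^{p}}^{p}}=\lambda^{\alpha}\cdot\frac{\int_{\mathbb{R}^{2}}|g|^{p-2}g(-\Delta)^{\alpha/2}g\,dy}{\|g\|_{L^{p}}^{p}}.$$
Choosing $g=\Delta_{j}f$ and $\lambda=2^{-j}$ sends the Fourier support of the rescaled function into the fixed annulus $\{1/2\leqslant|\eta|\leqslant2\}$ and extracts precisely the factor $2^{\alpha j}$, so it suffices to establish a uniform lower bound $\int|h|^{p-2}h(-\Delta)^{\alpha/2}h\,dx\geqslant\lambda_{0}\|h\|_{L^{p}}^{p}$ for every $h\in L^{p}(\mathbb{R}^{2})$ with $\widehat{h}$ supported in that annulus.

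For $0<\alpha<2$ I would then invoke the singular-integral representation
$$(-\Delta)^{\alpha/2}h(x)=c_{\alpha}\,\mathrm{P.V.}\int_{\mathbb{R}^{2}}\frac{h(x)-h(y)}{|x-y|^{2+\alpha}}\,dy,\qquad c_{\alpha}>0,$$
and symmetrize in $(x,y)$ after pairing with $|h|^{p-2}h$, obtaining
\begin{equation*}
\int|h|^{p-2}h(-\Delta)^{\alpha/2}h\,dx=\frac{c_{\alpha}}{2}\iint\frac{\bigl(|h(x)|^{p-2}h(x)-|h(y)|^{p-2}h(y)\bigr)\bigl(h(x)-h(y)\bigr)}{|x-y|^{2+\alpha}}\,dxdy.
\end{equation*}
I would then apply the uniform-convexity inequality $(|a|^{p-2}a-|b|^{p-2}b)(a-b)\geqslant 2^{2-p}|a-b|^{p}$, valid for $p\geqslant 2$ and $a,b\in\mathbb{R}$ and proved by an elementary case analysis on the signs of $a$ and $b$, to lower-bound the right-hand side by a constant multiple of the Slobodeckij seminorm $[h]_{W^{\alpha/p,p}}^{p}:=\iint|h(x)-h(y)|^{p}/|x-y|^{2+\alpha}\,dxdy$.

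The final step is to bound this seminorm below by $\|h\|_{L^{p}}^{p}$. This follows from the classical equivalence $[h]_{W^{s,p}}\sim\|(-\Delta)^{s/2}h\|_{L^{p}}$ for $0<s<1$ and $1<p<\infty$ (applied with $s=\alpha/p\in(0,1)$) combined with the lower Bernstein inequality $\|(-\Delta)^{s/2}h\|_{L^{p}}\geqslant c\|h\|_{L^{p}}$ for $h$ Fourier-supported away from $\xi=0$; the latter is immediate by writing $h=K*(-\Delta)^{s/2}h$ with $K=\mathscr{F}^{-1}[\chi(\xi)|\xi|^{-s}]\in L^{1}(\mathbb{R}^{2})$ for any $\chi\in C_{c}^{\infty}(\mathbb{R}^{2}\setminus\{0\})$ equal to $1$ on $\mathrm{supp}\,\widehat{h}$. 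For the endpoint $\alpha=2$, integration by parts gives $\int|h|^{p-2}h(-\Delta h)\,dx=(p-1)\int|h|^{p-2}|\nabla h|^{2}\,dx$, and the conclusion follows from a Poincar\'e-type argument for functions with Fourier support in the annulus. The main technical obstacle I anticipate is the quantitative Slobodeckij--Sobolev equivalence, which is classical but nontrivial; all other ingredients (scaling, the C\'ordoba-type symmetrization, uniform convexity of $|\cdot|^{p}$, and Bernstein's inequality) are essentially routine.
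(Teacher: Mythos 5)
The paper offers no proof of this lemma---it is imported from \cite{CMZ2} and \cite{Wu}---and your argument essentially reconstructs the proof in \cite{Wu}: rescale to the unit annulus, symmetrize the singular-integral representation of $(-\Delta)^{\alpha/2}$, apply the convexity inequality for $a\mapsto|a|^{p-2}a$, and bound the resulting Gagliardo seminorm from below. The scaling identity, the symmetrization, and the inequality $(|a|^{p-2}a-|b|^{p-2}b)(a-b)\geqslant 2^{2-p}|a-b|^{p}$ for $p\geqslant2$ are all correct. However, the final step as you state it rests on a false fact: the ``classical equivalence'' $[h]_{W^{s,p}}\sim\|(-\Delta)^{s/2}h\|_{L^{p}}$ fails for $p\neq2$, since the Gagliardo seminorm characterizes $\dot{B}^{s}_{p,p}$ rather than the Riesz-potential space $\dot{F}^{s}_{p,2}$, and for $p>2$ the general embedding goes the wrong way ($\dot{F}^{s}_{p,2}\hookrightarrow\dot{B}^{s}_{p,p}$), so one cannot pass from $[h]_{W^{s,p}}$ down to $\|(-\Delta)^{s/2}h\|_{L^{p}}$. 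Fortunately the step you actually need is easier than the tool you invoke: the classical characterization $[h]_{W^{s,p}}^{p}\sim\|h\|_{\dot{B}^{s}_{p,p}}^{p}=\sum_{k}2^{ksp}\|\Delta_{k}h\|_{L^{p}}^{p}$ for $0<s<1$ (see \cite{Bahouri}) gives $[h]_{W^{\alpha/p,p}}^{p}\geqslant c\|h\|_{L^{p}}^{p}$ at once when $\widehat{h}$ is supported in the unit annulus, because only finitely many Littlewood--Paley blocks survive. So this defect is repairable, but the repair replaces, rather than uses, your ``main technical obstacle.''

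The endpoint $\alpha=2$ is a genuine gap. After integrating by parts you need $\int|h|^{p-2}|\nabla h|^{2}\,dx\geqslant c\|h\|_{L^{p}}^{p}$, equivalently $\|\nabla(|h|^{p/2})\|_{L^{2}}^{2}\gtrsim\||h|^{p/2}\|_{L^{2}}^{2}$; but $|h|^{p/2}$ is \emph{not} spectrally supported away from the origin even though $h$ is, so no Poincar\'e or reverse Bernstein inequality applies to it, and the weight $|h|^{p-2}$ may be small exactly where $|\nabla h|$ is large. This loss of spectral localization under $h\mapsto|h|^{p/2}$ is precisely the difficulty that the ``new Bernstein inequality'' of \cite{CMZ2} (and the analogous lemma of Danchin) is designed to overcome, and it requires a separate, nontrivial argument rather than a routine one. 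Since the present paper only applies the lemma with $2/3<\alpha<1$, your argument covers what is actually used here, but as a proof of the lemma in the stated range $0<\alpha\leqslant2$ the case $\alpha=2$ is not established.
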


\section{Proof of Main Results}\label{s3}
In this section, we prove Theorem \ref{t1-1}.
Let $T$, $\alpha$, $p$, $q$ and $r$ satisfy the assumptions of Theorem \ref{t1-1}
and let $\sigma$ satisfy $\alpha-2/p<\sigma<2/p$.
We use the following notation for simplicity in this section:
\begin{equation*}\label{3.0}
	\begin{split}
		s_{\rm c}
			&:=1+\frac{2}{p}-\alpha,\\
		X_T^{p,q}
			&:=\widetilde{L}^{\infty}(0,T;\dot{B}_{p,1}^0(\mathbb{R}^2))
			\cap \widetilde{L}^{\infty}(0,T;\dot{B}_{p,q}^{s_{\rm c}}(\mathbb{R}^2)).
	\end{split}
\end{equation*}
We consider the successive approximation sequences
$\{\theta_0^{(n)}\}_{n=0}^{\infty}\subset \dot{B}_{p,1}^0(\mathbb{R}^2)\cap \dot{B}_{p,q}^{s_{\rm c}}(\mathbb{R}^2)$ of the initial data
and $\{\theta^{(n)}\}_{n=0}^{\infty}\subset X_T^{p,q}$
of solutions to (\ref{QG}) defined inductively as follows:

First, let $\theta_0^{(0)}(x)=0$ and $\theta^{(0)}(t,x)=0$.
Next, if $\theta_0^{(n)}$ and $\theta^{(n)}$ are determined, then we define $\theta_0^{(n+1)}$ and $\theta^{(n+1)}$ by the following linear equation:
\begin{equation}\label{3.1}
	\begin{cases}
		\partial_t \theta^{(n+1)}
			+(-\Delta)^{\frac{\alpha}{2}} \theta^{(n+1)}
			+u^{(n)} \cdot \nabla \theta^{(n+1)}
			=S_{n+4}F,
			&\qquad 0<t\leqslant T,x\in \mathbb{R}^2,\\
		u^{(n)}=\mathcal{R}^{\perp} \theta^{(n)},
			&\qquad 0\leqslant t\leqslant T, x\in\mathbb{R}^2,\\
		\theta^{(n+1)}(0,x)=S_{n+4}\theta_0^{(n+1)},
			&\qquad x\in \mathbb{R}^2,
	\end{cases}
\end{equation}
where $\theta_0^{(n+1)}$ is given by
\begin{equation}\label{3.2}
	\theta_0^{(n+1)}
	:=\sum_{k=0}^{\infty} e^{-Tk(-\Delta)^{\frac{\alpha}{2}}}
	\left( \theta^{(n)}(T)-e^{-T(-\Delta)^{\frac{\alpha}{2}}}\theta^{(n)}(0) \right) .
\end{equation}
For $n\in \mathbb{N}\cup\{0\}$, we put $\psi^{(n)}(t):=\theta^{(n)}(t)-e^{-t(-\Delta)^{\frac{\alpha}{2}}}\theta^{(n)}(0)$ and
\begin{equation*}\label{3.2.1}
	\begin{split}
		A_n
			&:=\max \left\{ \|\theta_0^{(n)}\|_{\dot{B}_{p,1}^0\cap\dot{B}_{p,q}^{s_{\rm c}}},
			\|\theta^{(n)}\|_{X_T^{p,q}} \right\} ,\\
		B_n
			&:=\|\theta_0^{(n+1)}-\theta_0^{(n)}\|_{\dot{B}_{p,q}^{\sigma}}
			+\|\theta^{(n+1)}-\theta^{(n)}\|_{\widetilde{L}^{\infty}(0,T;\dot{B}_{p,q}^{\sigma})}.
	\end{split}
\end{equation*}
The well-definedness of the sequences is assured if the series in (\ref{3.2}) converges in $\dot{B}_{p,1}^0(\mathbb{R}^2)\cap \dot{B}_{p,q}^{s_{\rm c}}(\mathbb{R}^2)$.
In the following lemma, we check the convergence and derive some properties of the sequences.
\begin{lemm}\label{t3-1}
	Let $n$ be an positive integer.
	Assume that $\theta_0^{(n)}\in \dot{B}_{p,1}^0(\mathbb{R}^2)\cap \dot{B}_{p,q}^{s_{\rm c}}(\mathbb{R}^2)$
	and $\theta^{(n)}\in X_T^{p,q}$.
	Then, for every $F\in BC((0,\infty);\dot{B}_{r,\infty}^0(\mathbb{R}^2))$,
	the series in (\ref{3.2}) converges in
	$\dot{B}_{p,1}^0(\mathbb{R}^2)\cap \dot{B}_{p,q}^{s_{\rm c}}(\mathbb{R}^2)$
	and it holds
	\begin{equation}\label{3.3.1}
		(1-e^{-T(-\Delta)^{\frac{\alpha}{2}}})\theta_0^{(n+1)}
		=\theta^{(n)}(T)-e^{-T(-\Delta)^{\frac{\alpha}{2}}}\theta^{(n)}(0)
	\end{equation}
	in $\dot{B}_{p,1}^0(\mathbb{R}^2)\cap \dot{B}_{p,q}^{s_{\rm c}}(\mathbb{R}^2)$.
	Moreover,
	there exist positive constants $\delta_1=\delta_1(\alpha,p,q,r,T)$ and $C_1=C_1(\alpha,p,q,r,T)$ such that
	if $F$ satisfies
	\begin{equation*}
	 	\sup_{t>0}\|F(t)\|_{\dot{B}_{r,\infty}^0}\leqslant \delta_1,
	\end{equation*}
	then it holds
	\begin{equation}\label{3.5}
		\begin{split}
			\sup_{m\in \mathbb{N}\cup\{0\}}\|\theta_0^{(m)}\|_{\dot{B}_{p,1}^0\cap\dot{B}_{p,q}^{s_{\rm c}}}
				&\leqslant 2C_1\sup_{t>0}\|F(t)\|_{\dot{B}_{r,\infty}^0},\\
			\sup_{m\in \mathbb{N}\cup\{0\}}\|\theta^{(m)}\|_{X_T^{p,q}}
				&\leqslant 2C_1\sup_{t>0}\|F(t)\|_{\dot{B}_{r,\infty}^0}.
		\end{split}
	\end{equation}
\end{lemm}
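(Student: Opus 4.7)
The plan is to establish the three assertions of the lemma in order: the series convergence in (\ref{3.2}), the identity (\ref{3.3.1}), and the uniform bounds (\ref{3.5}), where only the last requires an induction.

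For the first two assertions, I will set $f := \theta^{(n)}(T) - e^{-T(-\Delta)^{\alpha/2}}\theta^{(n)}(0)$ so that the series in (\ref{3.2}) becomes $\theta_0^{(n+1)} = \sum_{k \geq 0} e^{-Tk(-\Delta)^{\alpha/2}}f$. The strategy is to verify the hypotheses of Lemma~\ref{t2-5} at the two regularities $s = 0$ (with $q = 1$) and $s = s_{\rm c}$, so that (\ref{3.3.1}) follows directly from (\ref{2.15}). The assumption $\theta^{(n)} \in X_T^{p,q}$ together with the contraction property in Lemma~\ref{t2-1}(2) immediately gives $f \in \dot{B}_{p,1}^0 \cap \dot{B}_{p,q}^{s_{\rm c}}$. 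The subtler task is to verify $f \in \dot{B}_{p,1}^{-\alpha} \cap \dot{B}_{p,q}^{s_{\rm c}-\alpha}$, for which I will invoke the Duhamel identity
\begin{equation*}
\psi^{(n)}(T) = \int_0^T e^{-(T-\tau)(-\Delta)^{\alpha/2}}\bigl(S_{n+3}F(\tau) - u^{(n-1)}(\tau)\cdot\nabla\theta^{(n)}(\tau)\bigr)\, d\tau
\end{equation*}
inherited from the equation for $\theta^{(n)}$. The forcing contribution can be controlled via Lemma~\ref{t2-1}(2) combined with the Bernstein embedding $\dot{B}_{r,\infty}^0 \hookrightarrow \dot{B}_{p,q}^{s_{\rm c}-\alpha}$ (valid precisely under the strict hypothesis $r > 2/(2\alpha-1)$), while the nonlinear contribution, after rewriting $u^{(n-1)}\cdot\nabla\theta^{(n)} = \nabla\cdot(u^{(n-1)}\theta^{(n)})$ by the divergence-free property, will be handled via paraproduct decomposition and the estimates (\ref{2.10.2})--(\ref{2.10.3}).

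The uniform bounds (\ref{3.5}) will then be established by induction on $n$, with the base case $n = 0$ immediate from $\theta_0^{(0)} = \theta^{(0)} = 0$. In the inductive step, assuming $A_m \leq 2C_1\sup_t\|F(t)\|_{\dot{B}_{r,\infty}^0}$ for all $m \leq n$, I will seek the same bound for $m = n+1$. The estimate (\ref{2.16}) of Lemma~\ref{t2-5} converts the bounds on $\psi^{(n)}(T)$ obtained above into a bound of the form $\|\theta_0^{(n+1)}\|_{\dot{B}_{p,1}^0 \cap \dot{B}_{p,q}^{s_{\rm c}}} \leq C(\|F\|_{L^\infty\dot{B}_{r,\infty}^0} + A_{n-1}A_n)$. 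For the $X_T^{p,q}$-norm of $\theta^{(n+1)}$, I will perform an $L^p$ energy estimate at each dyadic block of (\ref{3.1}): applying $\Delta_j$, pairing with $|\Delta_j\theta^{(n+1)}|^{p-2}\Delta_j\theta^{(n+1)}$ and invoking Lemma~\ref{t2-7} together with the divergence-free property of $u^{(n)}$ (which annihilates the transport contribution), one obtains
\begin{equation*}
\frac{d}{dt}\|\Delta_j\theta^{(n+1)}\|_{L^p} + \lambda 2^{\alpha j}\|\Delta_j\theta^{(n+1)}\|_{L^p} \leq \|[\Delta_j, u^{(n)}]\cdot\nabla\theta^{(n+1)}\|_{L^p} + \|\Delta_j S_{n+4}F\|_{L^p}.
\end{equation*}
Integrating via the integrating factor, multiplying by $2^{s_{\rm c} j}$ (respectively by the weight for the $\dot{B}_{p,1}^0$ component) and summing in $j$ via Lemma~\ref{t2-3}, the factor $2^{-\alpha j}$ arising from $\int_0^t e^{-\lambda 2^{\alpha j}(t-\tau)}\, d\tau$ exactly compensates the single derivative lost in the commutator, yielding an estimate of the form $\|\theta^{(n+1)}\|_{X_T^{p,q}} \leq C\bigl(\|\theta_0^{(n+1)}\|_{\dot{B}_{p,1}^0 \cap \dot{B}_{p,q}^{s_{\rm c}}} + \|F\|_{L^\infty\dot{B}_{r,\infty}^0}\bigr) + CA_n\|\theta^{(n+1)}\|_{X_T^{p,q}}$. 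Choosing $\delta_1$ so that $2CC_1\delta_1 \leq 1/2$ will allow absorption of the quadratic term on the left and thereby close the induction.

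I expect the main obstacle to be the low-regularity estimate $\|\psi^{(n)}(T)\|_{\dot{B}_{p,q}^{s_{\rm c}-\alpha}}$. The hypothesis $r > 2/(2\alpha-1)$ is precisely what makes the Bernstein embedding work for the forcing piece and is tight, while the scaling-critical exponent $s_{\rm c} = 1 + 2/p - \alpha$ exceeds $2/p$ by $1 - \alpha > 0$, which means that Lemma~\ref{t2-2} is not directly applicable to the product $u^{(n-1)}\theta^{(n)}$; accordingly, the paraproduct pieces must be handled delicately, exploiting both the divergence-free structure $\nabla\cdot(u^{(n-1)}\theta^{(n)})$ and the time integration against the heat semigroup (in the spirit of Lemma~\ref{t2-4}) to recover the required control.
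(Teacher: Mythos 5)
Your overall architecture---reducing the construction of $\theta_0^{(n+1)}$ to Lemma~\ref{t2-5} by estimating $\psi^{(n)}(T)$ in $\dot B^{0}_{p,1}\cap\dot B^{-\alpha}_{p,1}\cap\dot B^{s_{\rm c}}_{p,q}\cap\dot B^{s_{\rm c}-\alpha}_{p,q}$, then running a dyadic $L^p$ energy estimate for $\theta^{(n+1)}$ with Lemma~\ref{t2-7}, the divergence-free cancellation and Lemma~\ref{t2-3}, and closing by absorption and induction through a recursion of the form $A_{n+1}\leqslant C_1\sup_t\|F(t)\|_{\dot B^0_{r,\infty}}+C_1A_{n-1}A_n+C_1A_nA_{n+1}$---is the paper's. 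The gap is in how you propose to estimate $\psi^{(n)}(T)$, namely by the Duhamel representation with the full nonlinearity $\nabla\cdot(u^{(n-1)}\theta^{(n)})$ controlled by paraproducts. First, mere membership $\psi^{(n)}(T)\in\dot B^{s_{\rm c}}_{p,q}\cap\dot B^{0}_{p,1}$ (which is indeed immediate from $\theta^{(n)}\in X_T^{p,q}$) is not enough: the bound (\ref{2.16}) feeds $\|\psi^{(n)}(T)\|_{\dot B^{s_{\rm c}}_{p,q}}$ into $\|\theta_0^{(n+1)}\|_{\dot B^{s_{\rm c}}_{p,q}}$, so using only the trivial bound $\|\psi^{(n)}(T)\|_{\dot B^{s_{\rm c}}_{p,q}}\leqslant CA_n$ degenerates the recursion to $A_{n+1}\leqslant CA_n+\cdots$ with a constant that need not be less than one, and the induction does not close; you must prove $\|\psi^{(n)}(T)\|_{\dot B^{s_{\rm c}}_{p,q}}\lesssim \sup_t\|F(t)\|_{\dot B^0_{r,\infty}}+A_{n-1}A_n$, i.e.\ the ``small plus quadratic'' structure at the critical regularity. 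Second, the Duhamel route cannot deliver this: the time integration against $e^{-\lambda 2^{\alpha j}(T-\tau)}$ gains at most $\alpha$ derivatives while $\nabla\cdot$ costs one, so you would need $u^{(n-1)}\theta^{(n)}\in\dot B^{2s_{\rm c}-2/p}_{p,q}$; since $s_{\rm c}=1+2/p-\alpha>2/p$, the paraproduct $T_{u^{(n-1)}}\theta^{(n)}$ loses exactly $1-\alpha>0$ derivatives there (both (\ref{2.10.2}) and Lemma~\ref{t2-2} require the relevant indices to be below $2/p$), and neither the divergence form nor ``the spirit of Lemma~\ref{t2-4}'' rescues it, because Lemma~\ref{t2-4} applies only to products $f\cdot e^{-\tau(-\Delta)^{\alpha/2}}g$ in which one factor is a free evolution, whose integrable singularity $\tau^{-\frac{1}{\alpha}(s_1-\frac{2}{p})}$ is precisely what pays for the excess regularity. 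This is the supercritical obstruction the paper highlights in the introduction as the reason the integral-equation scheme fails.

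The missing idea is to treat $\psi^{(n)}$ by the same dyadic $L^p$ energy method you already use for $\theta^{(n+1)}$: $\psi^{(n)}$ solves the perturbed equation (\ref{3.7}); after applying $\Delta_j$, the pure transport part $u^{(n-1)}\cdot\nabla\Delta_j\psi^{(n)}$ is annihilated in the $L^p$ pairing by $\nabla\cdot u^{(n-1)}=0$, the commutator $[u^{(n-1)},\Delta_j]\cdot\nabla\psi^{(n)}$ is handled by Lemma~\ref{t2-3} (landing at regularity $2s_{\rm c}-1-\frac{2}{p}=s_{\rm c}-\alpha$, which the factor $\int_0^T 2^{\alpha j}e^{-\lambda2^{\alpha j}(T-\tau)}d\tau$ lifts back to $s_{\rm c}$), and the only genuinely bilinear Duhamel-type term that remains is $u^{(n-1)}\cdot\nabla e^{-t(-\Delta)^{\frac{\alpha}{2}}}\theta^{(n)}(0)$, which is exactly of the form covered by (\ref{2.13.0}) and (\ref{2.13.6.1}). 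With that replacement, the rest of your argument (the estimates for $\theta^{(n+1)}$, the base case, and the inductive absorption) goes through as you describe.
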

\begin{proof}
	To prove the convergence of the series in (\ref{3.2}) in
	$\dot{B}_{p,1}^0(\mathbb{R}^2)\cap \dot{B}_{p,q}^{s_{\rm c}}(\mathbb{R}^2)$,
	Lemma \ref{t2-5} yields that it suffices to check
	\begin{equation*}\label{3.6}
		\begin{split}
			&\theta^{(n)}(T)-e^{-T(-\Delta)^\frac{\alpha}{2}}\theta^{(n)}(0)=\psi^{(n)}(T)\\
			&\quad\in \left( \dot{B}_{p,q}^{s_{\rm c}}(\mathbb{R}^2)\cap \dot{B}_{p,q}^{s_{\rm c}-\alpha}(\mathbb{R}^2) \right)
			\cap \left( \dot{B}_{p,1}^{0}(\mathbb{R}^2)\cap \dot{B}_{p,1}^{-\alpha}(\mathbb{R}^2) \right).
		\end{split}
	\end{equation*}
	Since $\psi^{(n)}$ satisfies
	\begin{equation}\label{3.7}
		\partial_t\psi^{(n)}+(-\Delta)^{\frac{\alpha}{2}}\psi^{(n)}+u^{(n-1)}\cdot \nabla \psi^{(n)}+u^{(n-1)}\cdot \nabla e^{-t(-\Delta)^{\frac{\alpha}{2}}}\theta^{(n)}(0)=S_{n+4}F,
	\end{equation}
	applying $\Delta_j$ to (\ref{3.7}), we see that
	\begin{equation}\label{3.8}
		\begin{split}
			&\partial_t\Delta_j\psi^{(n)}
			+(-\Delta)^{\frac{\alpha}{2}}\Delta_j\psi^{(n)}\\
			&\qquad=S_{n+4}\Delta_jF
			+[u^{(n-1)},\Delta_j]\cdot \nabla \psi^{(n)}\\
			&\qquad\quad-u^{(n-1)}\cdot \nabla \Delta_j\psi^{(n)}
			-\Delta_j(u^{(n-1)}\cdot \nabla e^{-t(-\Delta)^{\frac{\alpha}{2}}}\theta^{(n)}(0)).
		\end{split}
	\end{equation}
	Multiplying (\ref{3.8}) by $p|\Delta_j \psi^{(n)}|^{p-2}\Delta_j\psi^{(n)}$ and integrating over $\mathbb{R}^2$,
	we have by the H\"older inequality that
	\begin{equation}\label{3.9}
		\begin{split}
			&\frac{d}{dt}(\|\Delta_j\psi^{(n)}(t)\|^p_{L^p})
			+p\int_{\mathbb{R}^2}|\Delta_j \psi^{(n)}(t,x)|^{p-2}\Delta_j \psi^{(n)}(t,x)(-\Delta)^{\frac{\alpha}{2}}\Delta_j\psi^{(n)}(t,x) dx\\
			&\qquad\leqslant
		  Cp\|\Delta_j F(t)\|_{L^p}\|\Delta_j \psi^{(n)}(t)\|_{L^p}^{p-1}
			+p\|[u^{(n-1)}(t),\Delta_j]\cdot \nabla \psi^{(n)}(t)\|_{L^p}\|\Delta_j\psi(t)\|_{L^p}^{p-1}\\
			&\qquad \quad
			-p\int_{\mathbb{R}^2} |\Delta_j \psi^{(n)}(t,x)|^{p-2}\Delta_j \psi^{(n)}(t,x) u^{(n-1)}(t,x)\cdot \nabla \Delta_j \psi^{(n)}(t,x) dx\\
			&\qquad \quad
			+p\|\Delta_j(u^{(n-1)}(t)\cdot \nabla e^{-t(-\Delta)^{\frac{\alpha}{2}}}\theta^{(n)}(0))\|_{L^p}\|\Delta_j\psi^{(n)}(t)\|_{L^p}^{p-1}.
		\end{split}
	\end{equation}
	By Lemma \ref{t2-7}, we obtain that
	\begin{equation}\label{3.10}
		\int_{\mathbb{R}^2}|\Delta_j \psi^{(n)}(t,x)|^{p-2}\Delta_j \psi^{(n)}(t,x)(-\Delta)^{\frac{\alpha}{2}}\Delta_j\psi^{(n)}(t,x) dx
		\geqslant
		\lambda 2^{\alpha j}\|\Delta_j \psi^{(n)}(t)\|_{L^p}^p
	\end{equation}
	for some $\lambda=\lambda(\alpha,p)>0$.
	On the other hand, it follows from the divergence free condition $\nabla \cdot u^{(n-1)}=0$ that
	\begin{equation}\label{3.11}
		\int_{\mathbb{R}^2} |\Delta_j \psi^{(n)}(t,x)|^{p-2}\Delta_j \psi^{(n)}(t,x) u^{(n-1)}(t,x)\cdot \nabla \Delta_j \psi^{(n)}(t,x) dx=0.
	\end{equation}
	Substituiting (\ref{3.10}) and (\ref{3.11}) into (\ref{3.9}), we have
	%\begin{equation*}\label{3.12}
	%	\begin{split}
	%		&\frac{d}{dt}(\|\Delta_j\psi^{(n)}(t)\|^p_{L^p})
	%		+p\lambda 2^{\alpha j}\|\Delta_j \psi^{(n)}(t)\|^p_{L^p}\\
	%		&\qquad\leqslant
	%		Cp\|\Delta_j F(t)\|_{L^p}\|\Delta_j \psi^{(n)}(t)\|_{L^p}^{p-1}
	%		+p\|[u^{(n-1)}(t),\Delta_j]\cdot \nabla \psi^{(n)}(t)\|_{L^p}\|\Delta_j\psi(t)\|_{L^p}^{p-1}\\
	%		&\qquad \quad
	%		+p\|\Delta_j(u^{(n-1)}(t)\cdot \nabla e^{-t(-\Delta)^{\frac{\alpha}{2}}}\theta^{(n)}(0))\|_{L^p}\|\Delta_j\psi^{(n)}(t)\|_{L^p}^{p-1}.
	%	\end{split}
	%\end{equation*}
	%Therefore, we obtain
	\begin{equation*}\label{3.13}
		\begin{split}
			&\frac{d}{dt}\|\Delta_j\psi^{(n)}(t)\|_{L^p}
			+\lambda 2^{\alpha j}\|\Delta_j \psi^{(n)}(t)\|_{L^p}\\
			&\qquad\leqslant
			C\|\Delta_j F(t)\|_{L^p}
			+\|[u^{(n-1)}(t),\Delta_j]\cdot \nabla \psi^{(n)}(t)\|_{L^p}\\
			&\quad \qquad+\|\Delta_j(u^{(n-1)}(t)\cdot \nabla e^{-t(-\Delta)^{\frac{\alpha}{2}}}\theta^{(n)}(0))\|_{L^p},
		\end{split}
	\end{equation*}
	which implies that
	\begin{equation}\label{3.14}
		\begin{split}
			\|\Delta_j\psi^{(n)}(T)\|_{L^p}
			&\leqslant C\int_0^T e^{-\lambda2^{\alpha j}(T-\tau)}\|\Delta_j F(\tau)\|_{L^p} d\tau\\
			&\quad+\int_0^T e^{-\lambda2^{\alpha j}(T-\tau)}\|[u^{(n-1)}(\tau),\Delta_j]\cdot \nabla \psi^{(n)}(\tau)\|_{L^p} d\tau\\
			&\quad+\int_0^T e^{-\lambda2^{\alpha j}(T-\tau)}\|\Delta_j(u^{(n-1)}(\tau)\cdot \nabla e^{-\tau(-\Delta)^{\frac{\alpha}{2}}}\theta^{(n)}(0))\|_{L^p} d\tau.
		\end{split}
	\end{equation}
	Let $s\in \{s_{\rm c},s_{\rm c}-\alpha\}$.
	Multiplying (\ref{3.14}) by $2^{sj}$, we have
	\begin{equation}\label{3.15}
		\begin{split}
			&2^{sj}\|\Delta_j\psi^{(n)}(T)\|_{L^p}\\
			&\qquad
				\leqslant C\int_0^T 2^{(s+\frac{2}{r}-\frac{2}{p})j}e^{-\lambda2^{\alpha j}(T-\tau)}\|\Delta_j F(\tau)\|_{L^r} d\tau\\
			&\qquad\quad
				+\int_0^T 2^{(\alpha+s-s_{\rm c})j}e^{-\lambda2^{\alpha j}(T-\tau)} d\tau\\
			&\qquad \qquad \qquad
				\times 2^{(2s_{\rm c}-1-\frac{2}{p})j}\|[u^{(n-1)},\Delta_j]\cdot \nabla \psi^{(n)}\|_{L^{\infty}(0,T;L^p)}\\
			&\qquad \quad
				+\int_0^T 2^{(\alpha+s-s_{\rm c})j}e^{-\lambda2^{\alpha j}(T-\tau)}\\
			&\qquad \qquad \qquad
				\times 2^{(2s_{\rm c}-1-\frac{2}{p})j}
				\|\Delta_j(u^{(n-1)}(\tau)\cdot \nabla e^{-\tau(-\Delta)^{\frac{\alpha}{2}}}\theta^{(n)}(0))\|_{L^p} d\tau.\\
			\end{split}
	\end{equation}
%	which implies that
%	\begin{equation}\label{3.15.1}
%		\begin{split}
%			&2^{sj}\|\Delta_j\psi^{(n)}(T)\|_{L^p}\\
%			&\qquad
%				\leqslant C\int_0^T 2^{(s+\frac{2}{r}-\frac{2}{p})j}e^{-\lambda2^{\alpha j}(T-\tau)}d\tau\sup_{t>0}\|F(t)\|_{\dot{B}_{r,\infty}^0}\\
%			&\qquad\quad
%				+\sup_{j'\in \mathbb{Z}}\int_0^T 2^{(\alpha+s-s_{\rm c})j'}e^{-\lambda2^{\alpha j'}(T-\tau)} d\tau\\
%			&\qquad \qquad \qquad
%				\times 2^{(2s_{\rm c}-1-\frac{2}{p})j}\|[u^{(n-1)},\Delta_j]\cdot \nabla \psi^{(n)}\|_{L^{\infty}(0,T;L^p)}\\
%			&\qquad \quad
%				+\int_0^T 2^{(\alpha+s-s_{\rm c})j}e^{-\lambda2^{\alpha j}(T-\tau)}\\
%			&\qquad \qquad \qquad
%				\times 2^{(2s_{\rm c}-1-\frac{2}{p})j}
%				\|\Delta_j(u^{(n-1)}(\tau)\cdot \nabla e^{-\tau(-\Delta)^{\frac{\alpha}{2}}}\theta^{(n)}(0))\|_{L^p} d\tau.
%		\end{split}
%	\end{equation}
	Taking $l^{q}(\mathbb{Z})$-norm of (\ref{3.15}), we see by (\ref{3.16}) that
	\begin{equation*}\label{3.17}
		\begin{split}
			&\|\psi^{(n)}(T)\|_{\dot{B}_{p,q}^s}\\
			&\leqslant C\sum_{j\in \mathbb{Z}}\int_0^T 2^{(s+\frac{2}{r}-\frac{2}{p})j}e^{-\lambda2^{\alpha j}(T-\tau)}d\tau
				\sup_{t>0}\|F(t)\|_{\dot{B}_{r,\infty}^0}\\
			&\quad
				+CT^{\frac{s_{\rm c}-s}{\alpha}}
				\left\|\left\{
					2^{(2s_{\rm c}-1-\frac{2}{p})j}\|[u^{(n-1)},\Delta_j]\cdot \nabla \psi^{(n)}\|_{L^{\infty}(0,T;L^p)}
				\right\}_{j\in \mathbb{Z}}\right\|_{l^q(\mathbb{Z})}\\
			&\quad
				+\left\|\left\{\int_0^T
				2^{(\alpha+s-s_{\rm c})j}e^{-\lambda2^{\alpha j}(T-\tau)}\right.\right.\\
			&\qquad \quad
				\left.\left.
					\times 2^{(2s_{\rm c}-1-\frac{2}{p})j}
					\|\Delta_j(u^{(n-1)}(\tau)\cdot \nabla e^{-\tau(-\Delta)^{\frac{\alpha}{2}}}\theta^{(n)}(0))\|_{L^p} d\tau
				\right\}_{j\in \mathbb{Z}}\right\|_{l^q(\mathbb{Z})}.
		\end{split}
	\end{equation*}
	Hence, it follows from Lemma \ref{t2-3} and (\ref{2.13.0}) that
	\begin{equation*}\label{3.19}
		\begin{split}
			\|\psi^{(n)}(T)\|_{\dot{B}_{p,q}^s}
			&\leqslant CT^{1-\frac{1}{\alpha}(s+\frac{2}{r}-\frac{2}{p})}
				\sup_{t>0}\|F(t)\|_{\dot{B}_{r,\infty}^0}\\
			&\quad+CT^{\frac{s_{\rm c}-s}{\alpha}}
				\|u^{(n-1)}\|_{\widetilde{L}^{\infty}(0,T;\dot{B}_{p,q}^{s_{\rm c}})}
				\|\psi^{(n)}\|_{\widetilde{L}^{\infty}(0,T;\dot{B}_{p,q}^{s_{\rm c}})}\\
			&\quad+CT^{\frac{s_{\rm c}-s}{\alpha}}
				(T^{-\frac{1-\alpha}{\alpha}}+1)
				\|u^{(n-1)}\|_{X_T^{p,q}}
				\|\theta^{(n)}(0)\|_{\dot{B}_{p,q}^{s_{\rm c}}}.
		\end{split}
	\end{equation*}
	Using
	\begin{equation*}\label{3.20}
		\begin{split}
			\|\psi^{(n)}\|_{\widetilde{L}^{\infty}(0,T;\dot{B}_{p,q}^{s_{\rm c}})}
			&\leqslant
				\|\theta^{(n)}\|_{\widetilde{L}^{\infty}(0,T;\dot{B}_{p,q}^{s_{\rm c}})}
				+\|e^{-t(-\Delta)^{\frac{\alpha}{2}}}\theta^{(n)}(0)\|_{\widetilde{L}^{\infty}(0,T;\dot{B}_{p,q}^{s_{\rm c}})}\\
			&\leqslant
				\|\theta^{(n)}\|_{\widetilde{L}^{\infty}(0,T;\dot{B}_{p,q}^{s_{\rm c}})}
				+C\|\theta^{(n)}(0)\|_{\dot{B}_{p,q}^{s_{\rm c}}}\\
			&\leqslant
				\|\theta^{(n)}\|_{X_T^{p,q}}
				+C\|\theta_0^{(n)}\|_{\dot{B}_{p,q}^{s_{\rm c}}}\\
			&\leqslant
				CA_n
		\end{split}
	\end{equation*}
	and the boundedness of the Riesz transform on the homogeneous  space-time Besov spaces, we obtain
	\begin{equation}\label{3.21}
		\begin{split}
			\|\psi^{(n)}(T)\|_{\dot{B}_{p,q}^s}
			&\leqslant CT^{1-\frac{1}{\alpha}(s+\frac{2}{r}-\frac{2}{p})}
				\sup_{t>0}\|F(t)\|_{\dot{B}_{r,\infty}^0}\\
			&\quad+CT^{\frac{s_{\rm c}-s}{\alpha}}
				\|\theta^{(n-1)}\|_{\widetilde{L}^{\infty}(0,T;\dot{B}_{p,q}^{s_{\rm c}})}
				A_n\\
			&\quad+CT^{\frac{s_{\rm c}-s}{\alpha}}
				(T^{-\frac{1-\alpha}{\alpha}}+1)
				\|\theta^{(n-1)}\|_{X_T^{p,q}}
				\|\theta_0^{(n)}\|_{\dot{B}_{p,q}^{s_{\rm c}}}.
		\end{split}
	\end{equation}
	Hence, it follows from (\ref{3.21}) and Lemma \ref{t2-5} that
	\begin{equation}\label{3.21.1}
		\begin{split}
			\|\theta_0^{(n+1)}\|_{\dot{B}_{p,q}^{s_{\rm c}}}
			&\leqslant
			C(T^{-1}\|\psi^{(n)}(T)\|_{\dot{B}_{p,q}^{s_{\rm c}-\alpha}}
			+\|\psi^{(n)}(T)\|_{\dot{B}_{p,q}^{s_{\rm c}}})\\
			&\leqslant CT^{\frac{1}{\alpha}(2\alpha-1-\frac{2}{r})}\sup_{t>0}\|F(t)\|_{\dot{B}_{r,\infty}^0}
			+C(1+T^{-\frac{1-\alpha}{\alpha}})A_{n-1}A_n.
		\end{split}
	\end{equation}
	Let $s':=s-s_{\rm c}\in \{-\alpha,0\}$.
	Multiplying (\ref{3.14}) by $2^{s'j}$ and
	%, we have
	%\begin{equation*}\label{3.22}
	%	\begin{split}
	%		&2^{s'j}\|\Delta_j\psi^{(n)}(T)\|_{L^p}\\
	%		&\qquad
	%			\leqslant C\int_0^T 2^{(s'+\frac{2}{r}-\frac{2}{p}-\beta)j}e^{-\lambda2^{\alpha j}(T-\tau)}d\tau\sup_{t>0}\|F(t)\|_{\dot{B}_{r,\infty}^0}\\
	%		&\qquad\quad
	%			+\int_0^T 2^{(\alpha+s-2s_{\rm c})j}e^{-\lambda2^{\alpha j}(T-\tau)} d\tau\\
	%		&\qquad \qquad \qquad
	%			\times
	%			\left\|\left\{
	%				2^{(2s_{\rm c}-1-\frac{2}{p})j}\|[u^{(n-1)},\Delta_j]\cdot \nabla \psi^{(n)}\|_{L^{\infty}(0,T;L^p)}
	%			\right\}_{j\in \mathbb{Z}}\right\|_{l^q(\mathbb{Z})}\\
	%		&\qquad \quad
	%			+\int_0^T 2^{(\alpha+s-2s_{\rm c})j}e^{-\lambda2^{\alpha j}(T-\tau)}\\
	%		&\qquad \qquad \qquad
	%			\times
	%				\|u^{(n-1)}(\tau)\cdot \nabla e^{-\tau(-\Delta)^{\frac{\alpha}{2}}}\theta^{(n)}(0)\|_
	%				{\dot{B}_{p,q}^{2s_{\rm c}-1-\frac{2}{p}}} d\tau.
	%	\end{split}
	%\end{equation*}
	taking $l^1(\mathbb{Z})$-norm, we see that
	\begin{equation*}\label{3.23}
		\begin{split}
			&\|\psi^{(n)}(T)\|_{\dot{B}_{p,1}^{s'}}\\
			&\qquad
				\leqslant C\sum_{j\in \mathbb{Z}}\int_0^T 2^{(s'+\frac{2}{r}-\frac{2}{p})j}e^{-\lambda2^{\alpha j}(T-\tau)}d\tau\sup_{t>0}\|F(t)\|_{\dot{B}_{r,\infty}^0}\\
			&\qquad\quad
				+\sum_{j\in \mathbb{Z}}\int_0^T 2^{(\alpha+s-2s_{\rm c})j}e^{-\lambda2^{\alpha j}(T-\tau)} d\tau\\
			&\qquad \qquad \qquad
				\times
				\left\|\left\{
					2^{(2s_{\rm c}-1-\frac{2}{p})j}\|[u^{(n-1)},\Delta_j]\cdot \nabla \psi^{(n)}\|_{L^{\infty}(0,T;L^p)}
				\right\}_{j\in \mathbb{Z}}\right\|_{l^q(\mathbb{Z})}\\
			&\qquad \quad
				+\sum_{j\in \mathbb{Z}}\int_0^T 2^{(\alpha+s-2s_{\rm c})j}e^{-\lambda2^{\alpha j}(T-\tau)}\\
			&\qquad \qquad \qquad
				\times
					\|u^{(n-1)}(\tau)\cdot \nabla e^{-\tau(-\Delta)^{\frac{\alpha}{2}}}\theta^{(n)}(0)\|_
					{\dot{B}_{p,q}^{2s_{\rm c}-1-\frac{2}{p}}} d\tau.
		\end{split}
	\end{equation*}
	It follows from Lemma \ref{t2-3} and (\ref{2.13.6.1}) that
	\begin{equation*}\label{3.24}
		\begin{split}
			\|\psi^{(n)}(T)\|_{\dot{B}_{p,1}^{s'}}
			&\leqslant CT^{1-\frac{1}{\alpha}(s'+\frac{2}{r}-\frac{2}{p})}
				\sup_{t>0}\|F(t)\|_{\dot{B}_{r,\infty}^0}\\
			&\quad+CT^{\frac{2s_{\rm c}-s}{\alpha}}
				\|u^{(n-1)}\|_{\widetilde{L}^{\infty}(0,T;\dot{B}_{p,q}^{s_{\rm c}})}
				\|\psi^{(n)}\|_{\widetilde{L}^{\infty}(0,T;\dot{B}_{p,q}^{s_{\rm c}})}\\
			&\quad+CT^{\frac{2s_{\rm c}-s}{\alpha}}
				(T^{-\frac{1-\alpha}{\alpha}}+1)
				\|u^{(n-1)}\|_{X_T^{p,q}}
				\|\theta^{(n)}(0)\|_{\dot{B}_{p,q}^{s_{\rm c}}}.
		\end{split}
	\end{equation*}
	Therefore, by the same argument as above, we have
	\begin{equation*}\label{3.25}
		\begin{split}
			\|\psi^{(n)}(T)\|_{\dot{B}_{p,q}^{s'}}
			&\leqslant
				CT^{\frac{s_{\rm c}}{\alpha}}T^{\frac{s_{\rm c}-s}{\alpha}}T^{\frac{1}{\alpha}(2\alpha-1-\frac{2}{r})}
				\sup_{t>0}\|F(t)\|_{\dot{B}_{r,\infty}^0}\\
			&\quad+CT^{\frac{s_{\rm c}}{\alpha}}T^{\frac{s_{\rm c}-s}{\alpha}}(1+T^{-\frac{1-\alpha}{\alpha}})A_{n-1}A_n
		\end{split}
	\end{equation*}
	and we see that the series in (\ref{3.2}) converges in $\dot{B}_{p,1}^0(\mathbb{R}^2)$ and
	\begin{equation}\label{3.26}
		\begin{split}
			\|\theta_0^{(n+1)}\|_{\dot{B}_{p,1}^{0}}
			&\leqslant
				C(T^{-1}\|\psi^{(n)}(T)\|_{\dot{B}_{p,1}^{-\alpha}}
				+\|\psi^{(n)}(T)\|_{\dot{B}_{p,1}^{0}})\\
			&\leqslant
				CT^{\frac{s_{\rm c}}{\alpha}}T^{\frac{1}{\alpha}(2\alpha-1-\frac{2}{r})}\sup_{t>0}\|F(t)\|_{\dot{B}_{r,\infty}^0}
				+CT^{\frac{s_{\rm c}}{\alpha}}(1+T^{-\frac{1-\alpha}{\alpha}})
				A_{n-1}A_n.
		\end{split}
	\end{equation}
	Combining (\ref{3.21}) and (\ref{3.26}), we find that Lemma \ref{t2-5} implies
	the series in (\ref{3.2}) converges in $\dot{B}_{p,1}^0(\mathbb{R}^2)\cap \dot{B}_{p,q}^{s_{\rm c}}(\mathbb{R}^2)$
	and (\ref{3.3.1}) also holds.

	Next, we prove (\ref{3.5}).
	Since $\Delta_j \theta^{(n+1)}$ satisfies
	\begin{equation*}\label{3.27}
		\begin{split}
			&\partial_t \Delta_j \theta^{(n+1)}
			+(-\Delta)^{\frac{\alpha}{2}}\Delta_j\theta^{(n+1)}\\
			&\qquad
			=
			S_{n+4}\Delta_jF
			+[u^{(n)},\Delta_j] \cdot \nabla \theta^{(n+1)}
			-u^{(n)} \cdot \nabla \Delta_j \theta^{(n+1)},
		\end{split}
	\end{equation*}
	the same argument as in the derivation of (\ref{3.14}) yields that
	\begin{equation}\label{3.28}
		\begin{split}
			\|\Delta_j \theta^{(n+1)}(t)\|_{L^p}
			&\leqslant
				e^{-\lambda 2^{\alpha j}t}\|\Delta_j\theta^{(n)}(0)\|_{L^p}
				+C\int_0^t e^{-\lambda 2^{\alpha j}(t-\tau)}\|\Delta_jF(\tau)\|_{L^p} d\tau\\
			&\quad
				+\int_0^t e^{\lambda 2^{\alpha j}(t-\tau)}\|[u^{(n)}(\tau),\Delta_j] \cdot \nabla \theta^{(n+1)}(\tau)\|_{L^p} d\tau,
		\end{split}
	\end{equation}
	which implies that
	\begin{equation}\label{3.29}
		\begin{split}
			&2^{s_{\rm c}j}\|\Delta_j\theta^{n+1}(t)\|_{L^p}\\
			&\quad\leqslant
				C2^{s_{\rm c}j}\|\Delta_j \theta_0^{(n+1)}\|_{L^p}
				+C\int_0^t 2^{(1+\frac{2}{r}-\alpha)j}e^{-\lambda 2^{\alpha j}(t-\tau)}\|\Delta_jF(\tau)\|_{L^r} d\tau\\
			&\qquad
				+\int_0^t 2^{\alpha j}e^{-\lambda 2^{\alpha j}(t-\tau)} d\tau
				2^{(2s_{\rm c}-1-\frac{2}{p})j}\|[u^{(n)},\Delta_j] \cdot \nabla \theta^{(n+1)}\|_{L^{\infty}(0,T;L^p)}.
		\end{split}
	\end{equation}
	%Thus, we have
	%\begin{equation}\label{3.30}
	%	\begin{split}
	%		&2^{s_{\rm c}j}\|\Delta_j\theta^{n+1}(t)\|_{L^p}\\
	%		&\quad\leqslant
	%			C2^{s_{\rm c}j}\|\Delta_j \theta_0^{(n+1)}\|_{L^p}
	%			+C\int_0^t 2^{(1+\frac{2}{r}-\alpha-\beta)j}e^{-\lambda 2^{\alpha j}(t-\tau)}d\tau
	%			\sup_{t>0}\|F(t)\|_{\dot{B}_{r,\infty}^0}\\
	%		&\qquad
	%			+C2^{(2s_{\rm c}-1-\frac{2}{p})j}\|[u^{(n)},\Delta_j] \cdot \nabla \theta^{(n+1)}\|_{L^{\infty}(0,T;L^p)}.
	%	\end{split}
	%\end{equation}
	Taking $L^{\infty}_t(0,T)$-norm of (\ref{3.29}) and then taking $l^{q}(\mathbb{Z})$-norm,
	we see by Lemma\ref{t2-3} that
	\begin{equation}\label{3.31}
		\begin{split}
			&\|\theta^{(n+1)}\|_{\widetilde{L}^{\infty}(0,T;\dot{B}_{p,q}^{s_{\rm c}})}\\
			&\quad \leqslant
				C\|\theta_0^{(n+1)}\|_{\dot{B}_{p,q}^{s_{\rm c}}}
				+CT^{\frac{1}{\alpha}(2\alpha-1-\frac{2}{r})}\sup_{t>0}\|F(t)\|_{\dot{B}_{r,\infty}^0}\\
			&\qquad
				+C\left\|\left\{
					2^{(2s_{\rm c}-1-\frac{2}{p})j}\|[u^{(n)},\Delta_j] \cdot \nabla \theta^{(n+1)}\|_{L^{\infty}(0,T;L^p)}
				\right\}_{j\in \mathbb{Z}}\right\|_{l^{q}(\mathbb{Z})}\\
			&\quad \leqslant
				C\|\theta_0^{(n+1)}\|_{\dot{B}_{p,q}^{s_{\rm c}}}
				+CT^{\frac{1}{\alpha}(2\alpha-1-\frac{2}{r})}\sup_{t>0}\|F(t)\|_{\dot{B}_{r,\infty}^0}\\
			&\qquad
				+C\|\theta^{(n)}\|_{\widetilde{L}^{\infty}(0,T;\dot{B}_{p,q}^{s_{\rm c}})}
				\|\theta^{(n+1)}\|_{\widetilde{L}^{\infty}(0,T;\dot{B}_{p,q}^{s_{\rm c}})}.
		\end{split}
	\end{equation}
	On the other hand, by (\ref{3.28}), we see that
	\begin{equation}\label{3.32}
		\begin{split}
			&\|\Delta_j\theta^{n+1}(t)\|_{L^p}\\
			&\quad\leqslant
				C\|\Delta_j\theta_0^{(n+1)}\|_{L^p}
				+C\int_0^t 2^{(\frac{2}{r}-\frac{2}{p})j}e^{-\lambda 2^{\alpha j}(t-\tau)}d\tau
				\sup_{t>0}\|F(t)\|_{\dot{B}_{r,\infty}^0}\\
			&\qquad
				+\int_0^t 2^{(\alpha-s_{\rm c})j}e^{-\lambda 2^{\alpha j}(t-\tau)} d\tau\\
			&\qquad \quad
				\times
				\left\|\left\{
					2^{(2s_{\rm c}-1-\frac{2}{p})j}\|[u^{(n)},\Delta_j] \cdot \nabla \theta^{(n+1)}\|_{L^{\infty}(0,T;L^p)}
				\right\}_{j\in \mathbb{Z}}\right\|_{l^q(\mathbb{Z})}.
		\end{split}
	\end{equation}
	Taking $L^{\infty}_t(0,T)$-norm and then $l^1(\mathbb{Z})$-norm of (\ref{3.32}), we have
	\begin{equation*}\label{3.33}
		\begin{split}
			&\|\theta^{(n+1)}\|_{\widetilde{L}^{\infty}(0,T;\dot{B}_{p,1}^0)}\\
			&\quad\leqslant
				C\|\theta_0^{(n+1)}\|_{\dot{B}_{p,1}^0}
				+C\sum_{j\in \mathbb{Z}}\sup_{0\leqslant t\leqslant T}
				\int_0^t 2^{(1+\frac{2}{r}-\alpha)j}e^{-\lambda 2^{\alpha j}(t-\tau)}d\tau
				\sup_{t>0}\|F(t)\|_{\dot{B}_{r,\infty}^0}\\
			&\qquad
				+\sum_{j\in \mathbb{Z}}\sup_{0\leqslant t\leqslant T}
				\int_0^t 2^{(\alpha-s_{\rm c})j}e^{-\lambda 2^{\alpha j}(t-\tau)} d\tau\\
			&\qquad \qquad \qquad
				\times
				\left\|\left\{
					2^{(2s_{\rm c}-1-\frac{2}{p})j}\|[u^{(n)},\Delta_j] \cdot \nabla \theta^{(n+1)}\|_{L^{\infty}(0,T;L^p)}
				\right\}_{j\in \mathbb{Z}}\right\|_{l^q(\mathbb{Z})}.
		\end{split}
	\end{equation*}
	Using Lemma \ref{t2-3} and the second inequality of (\ref{2.13.8.1})
	we obtain that
	\begin{equation}\label{3.35}
		\begin{split}
			\|\theta^{(n+1)}\|_{\widetilde{L}^{\infty}(0,T;\dot{B}_{p,1}^0)}
			&\leqslant
				C\|\theta_0^{(n+1)}\|_{\dot{B}_{p,1}^0}
				+CT^{\frac{s_{\rm c}}{\alpha}}T^{\frac{1}{\alpha}(2\alpha-1-\frac{2}{r})}\sup_{t>0}\|F(t)\|_{\dot{B}_{r,\infty}^0}\\
			&\qquad
				+CT^{\frac{s_{\rm c}}{\alpha}}
				\|\theta^{(n)}\|_{\widetilde{L}^{\infty}(0,T;\dot{B}_{p,q}^{s_{\rm c}})}
				\|\theta^{(n+1)}\|_{\widetilde{L}^{\infty}(0,T;\dot{B}_{p,q}^{s_{\rm c}})}.
		\end{split}
	\end{equation}
	Hence, combining estimates (\ref{3.21}), (\ref{3.26}), (\ref{3.31}) and (\ref{3.35}), we obtain
	\begin{equation}\label{3.35.0}
		A_{n+1}
		\leqslant
		C_1 \sup_{t>0} \|F(t)\|_{\dot{B}_{r,\infty}^0}
		+C_1 A_{n-1}A_n
		+C_1 A_nA_{n+1}
	\end{equation}
	for some $C_1=C_1(\alpha,p,q,r,T)$.

	On the other hand,
	since $\theta^{(1)}$ satisfies
	\begin{equation*}\label{3.35.1}
		\begin{cases}
			\partial_t \theta^{(1)}+(-\Delta)^{\frac{\alpha}{2}}\theta^{(1)}=S_4F, \qquad & t>0,x\in \mathbb{R}^2,\\
			\theta^{(1)}(0,x)=0, \qquad & x\in \mathbb{R}^2,
		\end{cases}
	\end{equation*}
	the simpler argument than above yields that
	\begin{equation}\label{3.35.2}
		A_1\leqslant C_1\sup_{t>0}\|F(t)\|_{\dot{B}_{r,\infty}^0}.
	\end{equation}
	Hence, if $F$ satisfies
	\begin{equation*}\label{3.35.3}
		\sup_{t>0}\|F(t)\|_{\dot{B}_{r,\infty}^0}
		\leqslant \delta_1:=\frac{1}{8C_1^2},
	\end{equation*}
	then by (\ref{3.35.0}), (\ref{3.35.2}) and the inductive argument, we obtain
	\begin{equation*}\label{3.35.3.1}
		A_m\leqslant 2C_1\sup_{t>0}\|F(t)\|_{\dot{B}_{r,\infty}^0}
	\end{equation*}
	for all $m\in \mathbb{N}\cup \{0\}$.
	This completes the proof.
\end{proof}
Next lemma ensures the convergence of the approximation sequences.
\begin{lemm}\label{t3-2}
	There exists a positive constant $\delta_2=\delta_2(\alpha,p,q,r,\sigma,T)\leqslant \delta_1$
	such that if $F\in BC((0,\infty);\dot{B}_{r,\infty}^0(\mathbb{R}^2))$ satisfies
	\begin{equation*}
		\sup_{t>0}\|F(t)\|_{\dot{B}_{r,\infty}^0}\leqslant \delta_2,
	\end{equation*}
	then it holds
	\begin{equation*}\label{3.37}
		\sum_{n=0}^{\infty}\|\theta_0^{(n+1)}-\theta_0^{(n)}\|_{\dot{B}_{p,q}^{\sigma}}
		+\sum_{n=0}^{\infty}\|\theta^{(n+1)}-\theta^{(n)}\|_{\widetilde{L}^{\infty}(0,T;\dot{B}_{p,q}^{\sigma})}<\infty.
	\end{equation*}
\end{lemm}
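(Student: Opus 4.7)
The plan is to mimic the energy scheme of Lemma~\ref{t3-1} on consecutive differences, but now at the lower Besov regularity $\sigma$ instead of $s_{\rm c}$. Set $\Theta^{(n)}:=\theta^{(n+1)}-\theta^{(n)}$, $U^{(n)}:=\mathcal{R}^{\perp}\Theta^{(n)}$ and $\Theta_0^{(n)}:=\theta_0^{(n+1)}-\theta_0^{(n)}$. Subtracting (\ref{3.1}) at levels $n+1$ and $n$, the difference $\Theta^{(n)}$ solves the linear drift-diffusion problem
\begin{equation*}
\partial_t\Theta^{(n)}+(-\Delta)^{\frac{\alpha}{2}}\Theta^{(n)}+u^{(n)}\cdot\nabla\Theta^{(n)}=(S_{n+4}-S_{n+3})F-U^{(n-1)}\cdot\nabla\theta^{(n)},
\end{equation*}
with initial datum $S_{n+4}\Theta_0^{(n)}+(S_{n+4}-S_{n+3})\theta_0^{(n)}$, and the perturbation $\psi^{(n+1)}-\psi^{(n)}$ satisfies an analogue of (\ref{3.7}). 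The aim is to derive a contractive recursion
\begin{equation*}
B_n\leqslant C_2\bigl(A_{n-1}+A_n+A_{n+1}\bigr)B_{n-1}+\rho_n,\qquad \sum_{n}\rho_n<\infty,
\end{equation*}
and close it using the uniform bound $\sup_m A_m\leqslant 2C_1\sup_t\|F(t)\|_{\dot{B}_{r,\infty}^0}$ from Lemma~\ref{t3-1}.

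For the bulk estimate on $\Theta^{(n)}$, I would apply $\Delta_j$, pair with $p|\Delta_j\Theta^{(n)}|^{p-2}\Delta_j\Theta^{(n)}$ and integrate; Lemma~\ref{t2-7} bounds the dissipation from below, while $\nabla\cdot u^{(n)}=0$ eliminates the drift, delivering a frequency-localized Duhamel bound of exactly the shape (\ref{3.14}). After multiplying by $2^{\sigma j}$ and taking $L^{\infty}_t(0,T)$ and $\ell^q(\mathbb{Z})$ norms, the commutator $[u^{(n)},\Delta_j]\cdot\nabla\Theta^{(n)}$ is handled by Lemma~\ref{t2-3} with exponents $(s_1,s_2)=(s_{\rm c},\sigma)$ -- admissible precisely because $\alpha-2/p<\sigma<2/p$ -- and the source $U^{(n-1)}\cdot\nabla\theta^{(n)}$ by Lemma~\ref{t2-2} at regularity $\sigma-\alpha$, with the lost $\alpha$ derivatives absorbed by the parabolic gain $\int_0^t 2^{\alpha j}e^{-\lambda 2^{\alpha j}(t-\tau)}d\tau\leqslant \lambda^{-1}$. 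Both contributions take the form $C(A_n+A_{n+1})B_{n-1}$.

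For the initial-data piece, subtracting (\ref{3.3.1}) at levels $n+1$ and $n$ yields
\begin{equation*}
\bigl(1-e^{-T(-\Delta)^{\frac{\alpha}{2}}}\bigr)\Theta_0^{(n)}=\psi^{(n)}(T)-\psi^{(n-1)}(T),
\end{equation*}
so Lemma~\ref{t2-5} gives $\|\Theta_0^{(n)}\|_{\dot{B}_{p,q}^{\sigma}}\leqslant C(T^{-1}\|\psi^{(n)}(T)-\psi^{(n-1)}(T)\|_{\dot{B}_{p,q}^{\sigma-\alpha}}+\|\psi^{(n)}(T)-\psi^{(n-1)}(T)\|_{\dot{B}_{p,q}^{\sigma}})$. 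The right-hand side is controlled by rerunning the previous energy argument at both levels $\sigma-\alpha$ and $\sigma$ on the equation for $\psi^{(n)}-\psi^{(n-1)}$; the additional source term of the form $U^{(n-1)}\cdot\nabla e^{-t(-\Delta)^{\alpha/2}}\Theta^{(n-1)}(0)$ is exactly the structure addressed by Lemma~\ref{t2-4}. Combining these estimates delivers the recursion above.

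The main obstacle is ensuring that the high-frequency residuals $(S_{n+4}-S_{n+3})F$ and $(S_{n+4}-S_{n+3})\theta_0^{(n)}$ produce a summable $\rho_n$. Since only $F\in\dot{B}_{r,\infty}^0$ is assumed, Bernstein gives $\|(S_{n+4}-S_{n+3})F\|_{L^p}\leqslant C 2^{(2/r-2/p)n}\|F\|_{\dot{B}_{r,\infty}^0}$, so after dissipative smoothing at frequency $2^{\alpha n}$ the contribution to the $\sigma$-norm behaves like $2^{(\sigma+2/r-2/p-\alpha)n}$. Geometric decay in $n$ therefore requires $\sigma+2/r-2/p<\alpha$, a condition compatible with the lower constraint $\sigma>\alpha-2/p$ precisely under the standing hypotheses $r>2/(2\alpha-1)$ and $p<4/\alpha$ (which together force $p<2r$, so that the two constraints on $\sigma$ overlap); this is what pins down the admissible interval for $\sigma$ fixed at the start of Section~\ref{s3}, possibly after a further restriction. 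Once $\sum_n\rho_n<\infty$ is secured, choosing
\begin{equation*}
\delta_2:=\min\Bigl\{\delta_1,\frac{1}{12C_1C_2}\Bigr\}
\end{equation*}
forces $3C_2\sup_m A_m\leqslant 1/2$, so the recursion reduces to $B_n\leqslant \tfrac12 B_{n-1}+\rho_n$, yielding $\sum_n B_n<\infty$ as required.
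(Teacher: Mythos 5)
Your proposal is correct and follows essentially the same route as the paper: energy estimates for the consecutive differences (and for $\psi^{(n+1)}-\psi^{(n)}$, fed through Lemma \ref{t2-5} to control the initial-data increments) at the lower regularity $\sigma$, with Lemmas \ref{t2-2}--\ref{t2-4} and \ref{t2-7} supplying the nonlinear bounds, the single Littlewood--Paley block $(S_{n+4}-S_{n+3})F$ and the cutoff residual of $\theta_0^{(n)}$ producing the summable factor $2^{-(s_{\rm c}-\sigma)n}$, and smallness of $\sup_m A_m$ closing the recursion for $\sum_n B_n$. The only slips are cosmetic bookkeeping (the semigroup source term should be $u^{(n)}\cdot\nabla e^{-t(-\Delta)^{\alpha/2}}(\theta^{(n+1)}(0)-\theta^{(n)}(0))$ rather than a product of two differences, and the recursion also carries self-referential $B_n$-terms that must be absorbed, exactly as in the paper's (\ref{3.45.1})); neither affects the argument.
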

\begin{proof}
	Due to
	\begin{equation*}
		\theta_0^{(n+2)}-\theta_0^{(n+1)}
		=\sum_{k=0}^{\infty} e^{-Tk(-\Delta)^{\frac{\alpha}{2}}}
		(\psi^{(n+1)}(T)-\psi^{(n)}(T))
	\end{equation*}
	and Lemma \ref{t2-5},
	we consider the estimates of $\psi^{(n+1)}(T)-\psi^{(n)}(T)$ in $\dot{B}_{p,q}^{\sigma-\alpha}(\mathbb{R}^2)\cap\dot{B}_{p,q}^{\sigma}(\mathbb{R}^2)$.
	Since $\psi^{(n+1)}-\psi^{(n)}$ satisfies
	\begin{equation}\label{3.47}
		\begin{split}
			&\partial_t (\psi^{(n+1)}-\psi^{(n)})
				+(-\Delta)^{\frac{\alpha}{2}} (\psi^{(n+1)}-\psi^{(n)})
				+u^{(n)} \cdot \nabla (\psi^{(n+1)}-\psi^{(n)})\\
			&\quad
				+u^{(n)} \cdot \nabla e^{-t(-\Delta)^{\frac{\alpha}{2}}}(\theta^{(n+1)}(0)-\theta^{(n)}(0))
				+(u^{(n)}-u^{(n-1)}) \cdot \nabla \theta^{(n)}
				=\Delta_{n+1}F,
		\end{split}
	\end{equation}
	we see that
	\begin{equation*}\label{3.47.0}
		\begin{split}
			&\partial_t \Delta_j(\psi^{(n+1)}-\psi^{(n)})
				+(-\Delta)^{\frac{\alpha}{2}} \Delta_j(\psi^{(n+1)}-\psi^{(n)})\\
			&\quad
				=\Delta_{n+1}\Delta_jF
				+[u^{(n)},\Delta_j]\cdot \nabla (\psi^{(n+1)}-\psi^{(n)})
				-u^{(n)} \cdot \nabla \Delta_j(\psi^{(n+1)}-\psi^{(n)})\\
			&\qquad
				-\Delta_j(u^{(n)} \cdot \nabla e^{-t(-\Delta)^{\frac{\alpha}{2}}}(\theta^{(n+1)}(0)-\theta^{(n)}(0)))
				-\Delta_j((u^{(n)}-u^{(n-1)}) \cdot \nabla \theta^{(n)}).
		\end{split}
	\end{equation*}
	Thus, the similar energy calculation as in the proof of Lemma \ref{t3-1} yields that
	\begin{equation*}\label{3.48}
		\begin{split}
			&\|\Delta_j(\psi^{(n+1)}(T)-\psi^{(n)}(T))\|_{L^p}\\
			&\quad \leqslant
			C\int_0^T e^{-\lambda 2^{\alpha j}(T-\tau)}2^{2(\frac{1}{r}-\frac{1}{p})j}\|\Delta_{n+1}\Delta_j F(\tau)\|_{L^r} d\tau\\
			&\qquad
				+\int_0^T e^{-\lambda 2^{\alpha j}(T-\tau)}\|[u^{(n)}(\tau),\Delta_j]\cdot \nabla (\psi^{(n+1)}(\tau)-\psi^{(n)}(\tau))\|_{L^p} d\tau\\
			&\qquad
				+\int_0^T e^{-\lambda 2^{\alpha j}(T-\tau)}
				\|\Delta_j(u^{(n)}(\tau) \cdot \nabla e^{-\tau(-\Delta)^{\frac{\alpha}{2}}}(\theta^{(n+1)}(0)-\theta^{(n)}(0)))\|_{L^p} d\tau\\
			&\qquad
				+\int_0^T e^{-\lambda 2^{\alpha j}(T-\tau)}\|\Delta_j((u^{(n)}(\tau)-u^{(n-1)}(\tau)) \cdot \nabla \theta^{(n)}(\tau))\|_{L^p} d\tau.
		\end{split}
	\end{equation*}
	Let $s\in \{\sigma,\sigma-\alpha\}$.
	Multiplying this by $2^{sj}$
	and
	%, we have
	%\begin{equation*}\label{3.49}
	%	\begin{split}
	%		&2^{sj}\|\Delta_j(\psi^{(n+1)}(T)-\psi^{(n)}(T))\|_{L^p}\\
	%		&\quad \leqslant
	%		C\int_0^T e^{-\lambda 2^{\alpha j}(T-\tau)}2^{(s-\sigma+1+\frac{2}{r}-\alpha-\beta)j}2^{-(s_{\rm c}-\sigma)n}\|\Delta_{n+1}\Delta_j F(\tau)\|_{L^r} d\tau\\
	%		&\qquad
	%			+\int_0^T 2^{(\alpha+s-\sigma)j} e^{-\lambda 2^{\alpha j}(T-\tau)} d\tau\\
	%		&\qquad\quad
	%			\times 2^{(s_{\rm c}+(\sigma-1)-\frac{2}{p})j}
	%			\|[u^{(n)},\Delta_j]\cdot \nabla (\psi^{(n+1)}-\psi^{(n)})\|_{L^{\infty}(0,T;L^p)}\\
	%		&\qquad
	%			+\int_0^T 2^{(\alpha+s-\sigma)j} e^{-\lambda 2^{\alpha j}(T-\tau)}\\
	%		&\qquad\quad
	%				\times 2^{(s_{\rm c}+(\sigma-1)-\frac{2}{p})j}
	%			\|\Delta_j(u^{(n)}(\tau) \cdot \nabla e^{-\tau(-\Delta)^{\frac{\alpha}{2}}}(\theta^{(n+1)}(0)-\theta^{(n)}(0)))\|_{L^p} d\tau\\
	%		&\qquad
	%			+\int_0^T 2^{(\alpha+s-\sigma)j} e^{-\lambda 2^{\alpha j}(T-\tau)} d\tau\\
	%		&\qquad\quad
	%			\times 2^{(\sigma+(s_{\rm c}-1)-\frac{2}{p})j}\|\Delta_j((u^{(n)}-u^{(n-1)}) \cdot \nabla \theta^{(n)})\|_{L^{\infty}(0,T;L^p)}.
	%	\end{split}
	%\end{equation*}
	taking $l^q(\mathbb{Z})$-norm of this, we obtain that
	\begin{equation*}\label{3.49}
		\begin{split}
			&\|\psi^{(n+1)}(T)-\psi^{(n)}(T)\|_{\dot{B}_{p,q}^s}\\
			&\quad \leqslant
			CT^{\frac{1}{\alpha}(2\alpha-1-\frac{2}{r})}2^{-(s_{\rm c}-\sigma)n}\sup_{t>0}\|F(t)\|_{\dot{B}_{r,\infty}^0}\\
			&\qquad
				+CT^{\frac{\sigma-s}{\alpha}}
				\left\|\left\{
					2^{(s_{\rm c}+(\sigma-1)-\frac{2}{p})j}
					\|[u^{(n)},\Delta_j]\cdot \nabla (\psi^{(n+1)}-\psi^{(n)})\|_{L^{\infty}(0,T;L^p)}
				\right\}_{j\in \mathbb{Z}}\right\|_{l^q(\mathbb{Z})}\\
			&\qquad
				+\left\|\left\{
					\int_0^T 2^{(\alpha+s-\sigma)j} e^{-\lambda 2^{\alpha j}(T-\tau)}2^{(s_{\rm c}+(\sigma-1)-\frac{2}{p})j}
				\right.\right.\\
			&\qquad\qquad\quad
				\left.\left.
					\times
					\|\Delta_j(u^{(n)} \cdot \nabla e^{-t(-\Delta)^{\frac{\alpha}{2}}}(\theta^{(n+1)}(0)-\theta^{(n)}(0)))\|_{L^p} d\tau
				\right\}_{j\in \mathbb{Z}}\right\|_{l^q(\mathbb{Z})}\\
			&\qquad
				+CT^{\frac{\sigma-s}{\alpha}}\|(u^{(n)}-u^{(n-1)}) \cdot \nabla \theta^{(n)}\|_
				{\widetilde{L}^{\infty}(0,T;\dot{B}_{p,q}^{\sigma+(s_{\rm c}-1)-\frac{2}{p}})}.
		\end{split}
	\end{equation*}
	Therefore, it follows from Lemmas \ref{t2-2}, \ref{t2-3} and (\ref{2.13.0}) that
	\begin{equation}\label{3.49}
		\begin{split}
			&\|\psi^{(n+1)}(T)-\psi^{(n)}(T)\|_{\dot{B}_{p,q}^s}\\
			&\quad \leqslant
			CT^{\frac{\sigma-s}{\alpha}}T^{\frac{1}{\alpha}(2\alpha-1-\frac{2}{r})}2^{-(s_{\rm c}-\sigma)n}
			\sup_{t>0}\|F(t)\|_{\dot{B}_{r,\infty}^0}\\
			&\qquad
				+CT^{\frac{\sigma-s}{\alpha}}
				\|\theta^{(n)}\|_{\widetilde{L}^{\infty}(0,T;\dot{B}_{p,q}^{s_{\rm c}})}
				\|\psi^{(n+1)}-\psi^{(n)}\|_{\widetilde{L}^{\infty}(0,T;\dot{B}_{p,q}^{\sigma})}\\
			&\qquad
				+CT^{\frac{\sigma-s}{\alpha}}
				(1+T^{-\frac{1-\alpha}{\alpha}})
				\|\theta^{(n)}\|_{X_T^{p,q}}
				\|\theta^{(n)}(0)-\theta^{(n-1)}(0)\|_{\dot{B}_{p,q}^{\sigma}}\\
			&\qquad
			+CT^{\frac{\sigma-s}{\alpha}}
			\|\theta^{(n)}\|_{\widetilde{L}^{\infty}(0,T;\dot{B}_{p,q}^{s_{\rm c}})}
			\|\theta^{(n)}-\theta^{(n-1)}\|_{\widetilde{L}^{\infty}(0,T;\dot{B}_{p,q}^{\sigma})}.
		\end{split}
	\end{equation}
	This gives
	\begin{equation}\label{3.49.1}
		\begin{split}
			&\|\theta_0^{(n+1)}-\theta_0^{(n)}\|_{\dot{B}_{p,q}^{\sigma}}\\
			&\quad\leqslant
			C(T^{-1}\|\psi^{(n+1)}(T)-\psi^{(n)}(T)\|_{\dot{B}_{p,q}^{\sigma-\alpha}}+\|\psi^{(n+1)}(T)-\psi^{(n)}(T)\|_{\dot{B}_{p,q}^{\sigma}})\\
			&\quad \leqslant
			CT^{\frac{1}{\alpha}(2\alpha-1-\frac{2}{r})}2^{-(s_{\rm c}-\sigma)n}
			\sup_{t>0}\|F(t)\|_{\dot{B}_{r,\infty}^0}\\
			&\qquad
				+CA_nB_n
				+C(1+T^{-\frac{1-\alpha}{\alpha}})A_nB_{n-1}.\\
		\end{split}
	\end{equation}
	Since $\theta^{(n+2)}-\theta^{(n+1)}$ satisfy
	\begin{equation*}\label{3.38}
		\begin{split}
			&
			\partial_t (\theta^{(n+2)}- \theta^{(n+1)})
			+ (-\Delta)^{\frac{\alpha}{2}}(\theta^{(n+2)}- \theta^{(n+1)})\\
			&\qquad
			+ u^{(n+1)} \cdot \nabla (\theta^{(n+2)}- \theta^{(n+1)})
			+ (u^{(n+1)}-u^{(n)}) \cdot \nabla \theta^{(n+1)}
			= \Delta_{n+2}F,
		\end{split}
	\end{equation*}
	we see that
	\begin{equation*}\label{3.39}
		\begin{split}
			&
			\partial_t \Delta_j(\theta^{(n+2)}- \theta^{(n+1)})
			+ (-\Delta)^{\frac{\alpha}{2}}\Delta_j(\theta^{(n+2)}- \theta^{(n+1)})\\
			&\quad= \Delta_j\Delta_{n+2}F
			+ [u^{(n+1)}, \Delta_j ]\cdot \nabla (\theta^{(n+2)}- \theta^{(n+1)})\\
			&\qquad- u^{(n+1)} \cdot \nabla \Delta_j(\theta^{(n+2)}-\theta^{(n+1)})
			- \Delta_j((u^{(n+1)}-u^{(n)}) \cdot \nabla \theta^{(n+1)}).
		\end{split}
	\end{equation*}
	By the similar energy calculation as in the proof of Lemma \ref{t3-1}, we have
	\begin{equation}\label{3.40}
		\begin{split}
			&
			\|\Delta_j(\theta^{(n+2)}(t)- \theta^{(n+1)}(t))\|_{L^p}\\
			&\qquad
				\leqslant
				e^{-\lambda 2^{\alpha j}t}\|\Delta_j(\theta^{(n+2)}(0)- \theta^{(n+1)}(0))\|_{L^p}\\
			&\qquad \quad
				+C\int_0^t 2^{(\frac{2}{r}-\frac{2}{p})j}e^{-\lambda 2^{\alpha j}(t-\tau)} \|\Delta_j\Delta_{n+2}F(\tau)\|_{L^r} d\tau\\
			&\qquad \quad
				+\int_0^t e^{-\lambda 2^{\alpha j}(t-\tau)}
				\|[u^{(n+1)}(\tau), \Delta_j ]\cdot \nabla (\theta^{(n+2)}(\tau)-\theta^{(n+1)}(\tau))\|_{L^p} d\tau\\
			&\qquad \quad
				+\int_0^t e^{-\lambda 2^{\alpha j}(t-\tau)} \|\Delta_j((u^{(n+1)}(\tau)-u^{(n)}(\tau)) \cdot \nabla \theta^{(n+1)}(\tau))\|_{L^p} d\tau.
		\end{split}
	\end{equation}
	Multiplying (\ref{3.40}) by $2^{\sigma j}$, we see that
	\begin{equation*}\label{3.41}
		\begin{split}
			&
			2^{\sigma j}\|\Delta_j(\theta^{(n+2)}(t)- \theta^{(n+1)}(t))\|_{L^p}\\
			&\leqslant
				2^{\sigma j}\|\Delta_j(\theta^{(n+2)}(0)- \theta^{(n+1)}(0))\|_{L^p}\\
			&+C\int_0^t 2^{(1+\frac{2}{r}-\alpha)j}e^{-\lambda 2^{\alpha j}(t-\tau)}d\tau
				\sup_{t>0}\|\Delta_{n+2}F(t)\|_{\dot{B}_{r,\infty}^{(s_{\rm c}-\sigma)}}\\
			&+\int_0^t 2^{\alpha j}e^{-\lambda 2^{\alpha j}(t-\tau)} d\tau
				2^{(s_{\rm c}+(\sigma-1)-\frac{2}{p})j}
				\|[u^{(n+1)}, \Delta_j ]\cdot \nabla (\theta^{(n+2)}-\theta^{(n+1)})\|_{L^{\infty}(0,T;L^p)} \\
			&+\int_0^t 2^{\alpha j}e^{-\lambda 2^{\alpha j}(t-\tau)} d\tau
				2^{(\sigma+(s_{\rm c}-1)-\frac{2}{p})j}
				\|\Delta_j((u^{(n+1)}-u^{(n)}) \cdot \nabla \theta^{(n+1)})\|_{L^{\infty}(0,T;L^p)}.
		\end{split}
	\end{equation*}
	%which implies that
	%\begin{equation*}\label{3.42}
	%	\begin{split}
	%		&
	%		2^{\sigma j}\|\Delta_j(\theta^{(n+2)}(t)- \theta^{(n+1)}(t))\|_{L^p}\\
	%		&\quad
	%			\leqslant
	%			2^{\sigma j}\|\Delta_j(\theta^{(n+2)}(0)- \theta^{(n+1)}(0))\|_{L^p}\\
	%		&\qquad
	%			+C\int_0^t 2^{(1+\frac{2}{r}-\alpha-\beta)j}e^{-\lambda 2^{\alpha j}(t-\tau)}d\tau
	%			2^{-(s_{\rm c}-\sigma)n}\sup_{t>0}\|\Delta_{n+2}F(t)\|_{\dot{B}_{r,\infty}^0}\\
	%		&\qquad
	%			+C2^{(s_{\rm c}+(\sigma-1)-\frac{2}{p})j}
	%			\|[u^{(n+1)}, \Delta_j ]\cdot \nabla (\theta^{(n+2)}- \theta^{(n+1)})\|_{L^{\infty}(0,T;L^p)} \\
	%		&\qquad
	%			+C2^{(\sigma+(s_{\rm c}-1)-\frac{2}{p})j}
	%			\|\Delta_j((u^{(n+1)}-u^{(n)}) \cdot \nabla \theta^{(n+1)})\|_{L^{\infty}(0,T;L^p)}.
	%	\end{split}
	%\end{equation*}
	By taking $L^{\infty}_t(0,T)$ and then $l^q(\mathbb{Z})$-norm,
	it follows from (\ref{2.13.8.1}) and Lemmas \ref{t2-2} and \ref{t2-3} that
	\begin{equation}\label{3.43}
		\begin{split}
			&
			\|\theta^{(n+2)}- \theta^{(n+1)}\|_{\widetilde{L}^{\infty}(0,T;\dot{B}_{p,q}^{\sigma})}\\
			&\qquad\quad
				\leqslant
				\|\theta^{(n+2)}(0)- \theta^{(n+1)}(0)\|_{\dot{B}_{p,q}^{\sigma}}\\
			&\qquad\qquad
				+CT^{\frac{1}{\alpha}(2\alpha-1-\frac{2}{r})}
				2^{-(s_{\rm c}-\sigma)n}
				\sup_{t>0}\|F(t)\|_{\dot{B}_{r,\infty}^0}\\
			&\qquad\qquad
				+C\|\theta^{(n+1)}\|_{\widetilde{L}^{\infty}(0,T;\dot{B}_{p,q}^{s_{\rm c}})}
				\|\theta^{(n+2)}- \theta^{(n+1)}\|_{\widetilde{L}^{\infty}(0,T;\dot{B}_{p,q}^{\sigma})} \\
			&\qquad\qquad
				+C\|\theta^{(n+1)}\|_{\widetilde{L}^{\infty}(0,T;\dot{B}_{p,q}^{s_{\rm c}})}
				\|\theta^{(n+1)}- \theta^{(n)}\|_{\widetilde{L}^{\infty}(0,T;\dot{B}_{p,q}^{\sigma})}.
		\end{split}
	\end{equation}
	Since it holds
	\begin{equation*}\label{3.44}
		\begin{split}
			\|\theta^{(n+2)}(0)- \theta^{(n+1)}(0)\|_{\dot{B}_{p,q}^{\sigma}}
			&\leqslant
				\|S_{n+5}(\theta_0^{(n+2)}-\theta_0^{(n+1)})\|_{\dot{B}_{p,q}^{\sigma}}
				+\|\Delta_{n+2} \theta^{(n+1)}\|_{\dot{B}_{p,q}^{\sigma}}\\
			&\leqslant
				C\|\theta_0^{(n+2)}-\theta_0^{(n+1)}\|_{\dot{B}_{p,q}^{\sigma}}
				+C 2^{-(s_{\rm c}-\sigma)n}
				\|\theta_0^{(n+1)}\|_{\dot{B}_{p,q}^{s_{\rm c}}}\\
			&\leqslant
				C\|\theta_0^{(n+2)}-\theta_0^{(n+1)}\|_{\dot{B}_{p,q}^{\sigma}}
				+C2^{-(s_{\rm c}-\sigma)n}A_{n+1},
		\end{split}
	\end{equation*}
	we have by (\ref{3.43}) that
	\begin{equation}\label{3.45}
		\begin{split}
			&\|\theta^{(n+2)}-\theta^{(n+1)}\|_{\widetilde{L}^{\infty}(0,T;\dot{B}_{p,q}^{\sigma})}\\
			&\qquad\leqslant
				C\|\theta_0^{(n+2)}-\theta_0^{(n+1)}\|_{\dot{B}_{p,q}^{\sigma}}
				+C 2^{-(s_{\rm c}-\sigma)n}
				\left(
					T^{\frac{1}{\alpha}(2\alpha-1-\frac{2}{r})}\sup_{t>0}\|F(t)\|_{\dot{B}_{r,\infty}^0}
					+A_{n+1}
				\right) \\
			&\qquad\quad +C \|\theta^{(n+1)}\|_{X_T^{p,q}}\|\theta^{(n+1)}-\theta^{(n)}\|_{\widetilde{L}^{\infty}(0,T;\dot{B}_{p,q}^{\sigma})}\\
			&\qquad\quad +C \|\theta^{(n+1)}\|_{X_T^{p,q}}\|\theta^{(n+2)}-\theta^{(n+1)}\|_{\widetilde{L}^{\infty}(0,T;\dot{B}_{p,q}^{\sigma})}.
		\end{split}
	\end{equation}
	Therefore, combining (\ref{3.49.1}) and (\ref{3.45}), we obtain
	\begin{equation}\label{3.45.1}
		\begin{split}
			B_{n+1}
			&\leqslant
			C_2 2^{-(s_{\rm c}-\sigma)n}
			\left(
				\sup_{t>0}\|F(t)\|_{\dot{B}_{r,\infty}^0}
				+A_{n+1}
			\right) \\
			&\quad
				+C_2 A_nB_{n-1}
				+C_2 (A_n+A_{n+1})B_n
				+C_2A_{n+1}B_{n+1}
		\end{split}
	\end{equation}
	for some $C_2=C_2(\alpha,p,q,r,\sigma,T)>0$.
	Here, we assume that
	\begin{equation*}
		\sup_{t>0}\|F(t)\|_{\dot{B}_{r,\infty}^0}\leqslant \delta_2
		=:\min \left\{ \delta_1,\frac{1}{16C_1C_2} \right\}.
	\end{equation*}
	Let $N\in \mathbb{N}$ satisfy $N\geqslant 2$.
	Then, summing (\ref{3.45.1}) over $n=1,...,N-1$ and using Lemma \ref{t3-1}, we have
	\begin{equation*}\label{3.62}
		\begin{split}
			\sum_{n=1}^{N-1}B_{n+1}
			&\leqslant
				C_T\delta_1
				\sum_{n=1}^{N-1}2^{-(s_{\rm c}-\sigma)n}\\
			&\quad
				+2C_1C_2 \sum_{n=1}^{N-1}B_{n-1}
				+4C_1C_2
				\sum_{n=1}^{N-1}B_n
				+2C_1C_2 \sum_{n=1}^{N-1}B_{n+1}
		\end{split}
	\end{equation*}
	for some constant $C_T>0$ depending on $T$.
	This implies
	\begin{equation*}\label{3.63}
		\begin{split}
			\sum_{n=2}^{N}B_n
			&\leqslant
				C_T\delta_1
				\sum_{n=1}^{N-1}2^{-(s_{\rm c}-\sigma)n}
				+\frac{1}{8}\sum_{n=0}^{N-2}B_n
				+\frac{2}{8}
				\sum_{n=1}^{N-1}B_n
				+\frac{1}{8}\sum_{n=2}^{N}B_n\\
			&\leqslant
				C_T\delta_1
				\sum_{n=1}^{\infty}2^{-(s_{\rm c}-\sigma)n}
				+\frac{1}{2}\sum_{n=0}^{N}B_n.\\
		\end{split}
	\end{equation*}
	Hence, we have
	\begin{equation*}\label{3.65}
		\frac{1}{2}\sum_{n=0}^{\infty}B_n
		\leqslant
		C_T\delta_1\sum_{n=1}^{\infty}2^{-(s_{\rm c}-\sigma)n}+B_0+B_1
		<\infty,
	\end{equation*}
	which completes the proof.
\end{proof}
\begin{lemm}\label{t3-3}
	There exists a positive constant $C_3=C_3(\alpha,p,q,r,T,\sigma)$
	such that
	\begin{equation}\label{3.50}
		\begin{split}
			&\|\theta-\widetilde{\theta}\|_{\widetilde{L}^{\infty}(0,T;\dot{B}_{p,q}^{\sigma})}\\
			&\quad\leqslant
				C_3
				\left(
					\|\theta\|_{\widetilde{L}^{\infty}(0,T;\dot{B}_{p,q}^{s_{\rm c}})}
					+\|\widetilde{\theta}\|_{\widetilde{L}^{\infty}(0,T;\dot{B}_{p,q}^{s_{\rm c}})}
				\right)
				\|\theta-\widetilde{\theta}\|_{\widetilde{L}^{\infty}(0,T;\dot{B}_{p,q}^{\sigma})}
		\end{split}
	\end{equation}
	for all $T$-time periodic solutions
	$\theta\in BC([0,\infty);B_{p,q}^{s_{\rm c}}(\mathbb{R}^2))\cap X_{T}^{p,q}$ and
	$\widetilde{\theta}\in BC([0,\infty);B_{p,q}^{s_{\rm c}}(\mathbb{R}^2))\cap \widetilde{L}^{\infty}(0,\infty;\dot{B}_{p,q}^{s_{\rm c}}(\mathbb{R}^2))$
	to (\ref{QG}) with the same $T$-time periodic external force $F$.
\end{lemm}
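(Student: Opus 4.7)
The plan is to mirror the energy-method framework used in Lemmas \ref{t3-1} and \ref{t3-2}, now applied to the difference $w := \theta - \widetilde{\theta}$, which satisfies the linear equation
\begin{equation*}
\partial_t w + (-\Delta)^{\frac{\alpha}{2}} w + u \cdot \nabla w + v \cdot \nabla \widetilde{\theta} = 0, \qquad v := \mathcal{R}^{\perp} w.
\end{equation*}
Because both $\theta$ and $\widetilde{\theta}$ are $T$-time periodic, so is $w$, and the identity $w(T) = w(0) =: w_0$ takes the place of the defining relation (\ref{3.2}) for the approximating initial data. First I apply $\Delta_j$, multiply by $p|\Delta_j w|^{p-2}\Delta_j w$, integrate over $\mathbb{R}^2$, use $\nabla \cdot u = 0$ to drop the transport remainder, and invoke Lemma \ref{t2-7} to obtain
\begin{equation*}
\tfrac{d}{dt}\|\Delta_j w(t)\|_{L^p} + \lambda 2^{\alpha j}\|\Delta_j w(t)\|_{L^p} \leqslant \|[u(t),\Delta_j]\cdot\nabla w(t)\|_{L^p} + \|\Delta_j(v(t)\cdot\nabla\widetilde{\theta}(t))\|_{L^p}.
\end{equation*}

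Writing the corresponding Duhamel identity at time $T$ and using $w(T) = w_0$ gives
\begin{equation*}
(1 - e^{-T(-\Delta)^{\frac{\alpha}{2}}})w_0 = -\int_0^T e^{-(T-\tau)(-\Delta)^{\frac{\alpha}{2}}}\bigl(u\cdot\nabla w + v\cdot\nabla\widetilde{\theta}\bigr)(\tau)\, d\tau =: \Psi,
\end{equation*}
so Lemma \ref{t2-5} yields $\|w_0\|_{\dot{B}_{p,q}^{\sigma}} \leqslant C\bigl(T^{-1}\|\Psi\|_{\dot{B}_{p,q}^{\sigma-\alpha}} + \|\Psi\|_{\dot{B}_{p,q}^{\sigma}}\bigr)$. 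For each $s\in\{\sigma-\alpha,\sigma\}$ I estimate $\|\Psi\|_{\dot{B}_{p,q}^{s}}$ by multiplying the integrand by $2^{sj}$, taking $l^{q}(\mathbb{Z})$-norms, and applying the integral bound (\ref{3.16}): the commutator piece is controlled by Lemma \ref{t2-3} with $(s_1,s_2) = (s_{\rm c},\sigma)$, which is admissible since $0<s_{\rm c}<1+2/p$, $\sigma<2/p$, and $s_{\rm c}+\sigma>0$; the product $v\cdot\nabla\widetilde{\theta}$ is controlled by Lemma \ref{t2-2} with $(s_1,s_2) = (\sigma, s_{\rm c}-1)$, which is admissible because $s_{\rm c}-1 = 2/p-\alpha < 2/p$ and $\sigma + (s_{\rm c}-1) > 0$ by the hypothesis $\sigma > \alpha - 2/p$. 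Each of these two bounds produces a factor of the form $(\|\theta\|_{\widetilde{L}^{\infty}(0,T;\dot{B}_{p,q}^{s_{\rm c}})}+\|\widetilde{\theta}\|_{\widetilde{L}^{\infty}(0,T;\dot{B}_{p,q}^{s_{\rm c}})})\cdot\|w\|_{\widetilde{L}^{\infty}(0,T;\dot{B}_{p,q}^{\sigma})}$. Substituting the resulting $\|w_0\|_{\dot{B}_{p,q}^{\sigma}}$-bound into the Duhamel expression $w(t) = e^{-t(-\Delta)^{\frac{\alpha}{2}}}w_0 + \int_0^t e^{-(t-\tau)(-\Delta)^{\frac{\alpha}{2}}}(\cdots)\,d\tau$, taking $L^{\infty}_t(0,T)$ and then $l^q(\mathbb{Z})$, produces the inequality (\ref{3.50}).

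The principal obstacle is that $s_{\rm c} = 1 + 2/p - \alpha > 2/p$ whenever $\alpha < 1$, so the standard product estimate Lemma \ref{t2-2} does not apply to any bilinear term in which both factors live in $\dot{B}_{p,q}^{s_{\rm c}}$; in particular the transport term $u\cdot\nabla w$ cannot be estimated naively this way. Moreover, Lemma \ref{t2-4} (which would resolve this via an extra $L^p$ factor) is off-limits here, because the hypothesis of Lemma \ref{t3-3} controls $\widetilde{\theta}$ only in $\widetilde{L}^{\infty}(0,\infty;\dot{B}_{p,q}^{s_{\rm c}})$ rather than in the stronger $X_T^{p,q}$. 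The way around is the structural asymmetry described above: the commutator device of Lemma \ref{t2-3} permits $s_1>2/p$, and in the remaining product $v\cdot\nabla\widetilde{\theta}$ both indices $(\sigma, s_{\rm c}-1)$ can be taken strictly below $2/p$. It is precisely this choice of exponents that permits the conclusion to be phrased purely in terms of the $\widetilde{L}^{\infty}(0,T;\dot{B}_{p,q}^{s_{\rm c}})$ norms of $\theta$ and $\widetilde{\theta}$.
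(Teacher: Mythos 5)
Your opening energy estimate for $w=\theta-\widetilde{\theta}$ and the reduction of everything to a bound on $\|w(0)\|_{\dot{B}_{p,q}^{\sigma}}$ match the paper's (\ref{3.52}). The gap is in how you then control $(1-e^{-T(-\Delta)^{\frac{\alpha}{2}}})w_0$. You define $\Psi$ as the exact Duhamel integral of the full nonlinearity $u\cdot\nabla w+v\cdot\nabla\widetilde{\theta}$ and claim that ``the commutator piece is controlled by Lemma \ref{t2-3}.'' But $\Psi$ contains $\Delta_j(u\cdot\nabla w)$, not $[u,\Delta_j]\cdot\nabla w$: writing $\Delta_j(u\cdot\nabla w)=u\cdot\nabla\Delta_jw+[\Delta_j,u]\cdot\nabla w$ leaves behind the term $\int_0^Te^{-(T-\tau)(-\Delta)^{\frac{\alpha}{2}}}\bigl(u\cdot\nabla\Delta_jw\bigr)d\tau$, and the divergence-free cancellation that kills this term is only available when it is paired against $|\Delta_jw|^{p-2}\Delta_jw$ in the $L^p$ energy identity --- it has no analogue inside a Duhamel integral against the semigroup. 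Estimated directly, this leftover term loses a full derivative that the kernel only recovers up to order $\alpha<1$ (a factor $\int_0^T2^je^{-\lambda2^{\alpha j}(T-\tau)}d\tau\sim 2^{(1-\alpha)j}$, divergent at high frequencies); this is exactly the supercritical obstruction described in the introduction, and you cannot route around it by citing Lemma \ref{t2-3}, which bounds only the commutator, not the product. Your energy inequality and your operator identity for $\Psi$ are two different formulations, and the step as written silently transplants the cancellation from one into the other.

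The paper's actual device, which your proposal omits, is to set $\psi(t):=\theta(t)-e^{-t(-\Delta)^{\frac{\alpha}{2}}}\theta(0)$ and likewise $\widetilde{\psi}$, so that $\psi(T)-\widetilde{\psi}(T)=(1-e^{-T(-\Delta)^{\frac{\alpha}{2}}})w_0$ by periodicity while $\psi-\widetilde{\psi}$ solves the PDE (\ref{3.56}) with \emph{zero initial data}; the energy method (with the genuine commutator decomposition and the divergence-free cancellation) then applies to that equation and yields (\ref{3.58}). The price is the extra term $u\cdot\nabla e^{-t(-\Delta)^{\frac{\alpha}{2}}}(\theta(0)-\widetilde{\theta}(0))$, which is precisely where Lemma \ref{t2-4} (via (\ref{2.13.0})) is used and where the hypothesis $\theta\in X_T^{p,q}$ --- as opposed to mere $\widetilde{L}^{\infty}(0,T;\dot{B}_{p,q}^{s_{\rm c}})$ control --- is consumed. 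Your declaration that Lemma \ref{t2-4} is ``off-limits'' and your failure to use the asymmetry between the hypotheses on $\theta$ and $\widetilde{\theta}$ are symptoms of the same missing idea. (A frequency-localized variant --- applying periodicity to the integrated energy inequality blockwise and dividing by $1-e^{-\lambda2^{\alpha j}T}$ --- could conceivably avoid the $\psi$-decomposition, but that is not what you wrote, and the identity $(1-e^{-T(-\Delta)^{\frac{\alpha}{2}}})w_0=\Psi$ with $\Psi$ the full Duhamel integral does not follow from, nor combine with, the energy inequality in the way your argument requires.)
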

\begin{proof}
	Since $\theta-\widetilde{\theta}$ satisfies
	\begin{equation*}\label{3.51}
		\partial_t(\theta-\widetilde{\theta})
		+(-\Delta)^{\frac{\alpha}{2}}(\theta-\widetilde{\theta})
		+u \cdot \nabla(\theta-\widetilde{\theta})
		+(u-\widetilde{u}) \cdot \nabla \widetilde{\theta}=0,
	\end{equation*}
	where $u=\mathcal{R}^{\perp}\theta$, $\widetilde{u}=\mathcal{R}^{\perp}\widetilde{\theta}$,
	we see that
	\begin{equation*}
		\begin{split}
			&\partial_t\Delta_j(\theta-\widetilde{\theta})
			+(-\Delta)^{\frac{\alpha}{2}}\Delta_j(\theta-\widetilde{\theta})\\
			&\qquad=[u,\Delta_j]\cdot \nabla(\theta-\widetilde{\theta})
			-u\cdot \nabla\Delta_j(\theta-\widetilde{\theta})
			-\Delta_j ((u-\widetilde{u}) \cdot \nabla \widetilde{\theta}).
		\end{split}
	\end{equation*}
	Therefore, it follows from the similar energy calculation as in the derivation of (\ref{3.45}) that
	\begin{equation}\label{3.52}
		\begin{split}
			&\|\theta-\widetilde{\theta}\|_{\widetilde{L}^{\infty}(0,T;\dot{B}_{p,q}^{\sigma})}\\
			&\qquad\leqslant
			\|\theta(0)-\widetilde{\theta}(0)\|_{\dot{B}_{p,q}^{\sigma}}\\
			&\qquad \quad
			+C\left(
				\|\theta\|_{\widetilde{L}^{\infty}(0,T;\dot{B}_{p,q}^{s_{\rm c}})}
				+\|\widetilde{\theta}\|_{\widetilde{L}^{\infty}(0,T;\dot{B}_{p,q}^{s_{\rm c}})}
			\right)
			\|\theta-\widetilde{\theta}\|_{\widetilde{L}^{\infty}(0,T;\dot{B}_{p,q}^{\sigma})}.
		\end{split}
	\end{equation}
	Next, we derive the estimate for $\theta(0)-\widetilde{\theta}(0)$.
	Since $\theta-\widetilde{\theta}$ is $T$-time periodic and the Duhamel principle gives
	\begin{equation*}\label{3.53}
		\theta(t)-\widetilde{\theta}(t)
		=e^{-t(-\Delta)^{\frac{\alpha}{2}}}(\theta(0)-\widetilde{\theta}(0))
		-\int_0^t e^{-(t-\tau)(-\Delta)^{\frac{\alpha}{2}}}(u(\tau) \cdot \nabla \theta(\tau)-\widetilde{u}(\tau) \cdot \nabla \widetilde{\theta}(\tau)) d\tau,
	\end{equation*}
	we have by Lemma \ref{t2-6} that
	\begin{equation}\label{3.54}
		\|\theta(0)-\widetilde{\theta}(0)\|_{\dot{B}_{p,q}^{\sigma}}
		\leqslant
		C(T^{-1}\|\psi(T)-\widetilde{\psi}(T)\|_{\dot{B}_{p,q}^{\sigma-\alpha}}
		+\|\psi(T)-\widetilde{\psi}(T)\|_{\dot{B}_{p,q}^{\sigma}}),
	\end{equation}
	where $\psi(t):=\theta(t)-e^{-t(-\Delta)^{\frac{\alpha}{2}}}\theta(0)$
	and $\widetilde{\psi}(t):=\widetilde{\theta}(t)-e^{-t(-\Delta)^{\frac{\alpha}{2}}}\widetilde{\theta}(0)$.
	Since it holds
	\begin{equation}\label{3.56}
		\begin{split}
			&\partial_t(\psi-\widetilde{\psi})+(-\Delta)^{\frac{\alpha}{2}}(\psi-\widetilde{\psi})\\
			&\qquad=-u\cdot \nabla(\psi-\widetilde{\psi})-u \cdot \nabla e^{-t(-\Delta)^{\frac{\alpha}{2}}}(\theta(0)-\widetilde{\theta}(0))
			-(u-\widetilde{u}) \cdot \nabla \widetilde{\theta},
		\end{split}
	\end{equation}
	applying $\Delta_j$ to (\ref{3.56}), we have
	\begin{equation*}\label{3.57}
		\begin{split}
			&\partial_t\Delta_j(\psi-\widetilde{\psi})+(-\Delta)^{\frac{\alpha}{2}}\Delta_j(\psi-\widetilde{\psi})\\\
			&\qquad=[u,\Delta_j]\cdot \nabla(\psi-\widetilde{\psi})-u \cdot \nabla \Delta_j(\psi-\widetilde{\psi})\\
			&\qquad \quad -\Delta_j (u \cdot \nabla e^{-t(-\Delta)^{\frac{\alpha}{2}}}(\theta(0)-\widetilde{\theta}(0)))
			-\Delta_j ((u-\widetilde{u}) \cdot \nabla \widetilde{\theta}).
		\end{split}
	\end{equation*}
	Hence, by the similar energy calculation as in the derivation of (\ref{3.49}), we obtain
	\begin{equation}\label{3.58}
		\begin{split}
			\begin{split}
				&T^{-1}\|\psi(T)-\widetilde{\psi}(T)\|_{\dot{B}_{p,q}^{\sigma-\alpha}}
				+\|\psi(T)-\widetilde{\psi}(T)\|_{\dot{B}_{p,q}^{\sigma}}\\
				&\qquad\leqslant
				C(1+T^{-\frac{1-\alpha}{\alpha}})\left(\|\theta\|_{X_{T}^{p,q}}
				+\|\widetilde{\theta}\|_{\widetilde{L}^{\infty}(0,T;\dot{B}_{p,q}^{s_{\rm c}})}\right)
				\|\theta-\widetilde{\theta}\|_{\widetilde{L}^{\infty}(0,T;\dot{B}_{p,q}^{\sigma})}.
			\end{split}
		\end{split}
	\end{equation}
	Here, we have used
	\begin{equation*}\label{3.58.1}
		\|\theta(0)-\widetilde{\theta}(0)\|_{\dot{B}_{p,q}^{\sigma}}
		\leqslant \sup_{0\leqslant t\leqslant T}\|\theta(t)-\widetilde{\theta}(t)\|_{\dot{B}_{p,q}^{\sigma}}
		\leqslant \|\theta-\widetilde{\theta}\|_{\widetilde{L}^{\infty}(0,T;\dot{B}_{p,q}^{\sigma})}.
	\end{equation*}
	Combining (\ref{3.52}), (\ref{3.54}) and (\ref{3.58}), we get (\ref{3.50}).
	This completes the proof.
\end{proof}
Now we are in a position to prove Theorem \ref{t1-1}.
\begin{proof}[Proof of Theorem \ref{t1-1}]
	Let $\alpha,p,q,r$ and $T$ satisfy the assumptions of Theorem \ref{t1-1} and let $\sigma:=\alpha/2$.
	Then, $\sigma$ satisfies $\alpha-2/p<\sigma<2/p$.
	We put
	\begin{equation*}\label{3.59}
		\delta
		:=\min
		\left\{
			\delta_1,\delta_2,\frac{1}{8C_1C_3}
		\right\}
	\end{equation*}
	and let $F\in BC((0,\infty);\dot{B}_{r,\infty}^0(\mathbb{R}^2))$ satisfy
	\begin{equation*}\label{3.59.1}
		\sup_{t>0}\|F(t)\|_{\dot{B}_{r,\infty}^0}\leqslant \delta.
	\end{equation*}
	It follows from (\ref{3.5}) that
	\begin{equation}\label{3.61}
		\sup_{n\in \mathbb{N}\cup\{0\}}\|\theta^{(n)}\|_{X_T^{p,q}}
		\leqslant 2C_1\sup_{t>0}\|F(t)\|_{\dot{B}_{r,\infty}^0}
		\leqslant 2C_1\delta=:K.
	\end{equation}
	From Lemma \ref{t3-2}, there exist limits $\theta_0\in \dot{B}_{p,q}^{\sigma}(\mathbb{R}^2)$ and $\theta\in L^{\infty}(0,T;\dot{B}_{p,q}^{\sigma}(\mathbb{R}^2))$ such that
	\begin{equation*}\label{3.66}
		\begin{split}
			\theta_0&=\sum_{n=0}^{\infty}(\theta^{(n+1)}_0-\theta_0^{(n)})=\lim_{n\to \infty}\theta_0^{(n)}
			\qquad {\rm in }\ \dot{B}_{p,q}^{\sigma}(\mathbb{R}^2),\\
			\theta&=\sum_{n=0}^{\infty}(\theta^{(n+1)}-\theta^{(n)})=\lim_{n\to \infty}\theta^{(n)}
			\qquad {\rm in }\ L^{\infty}(0,T;\dot{B}_{p,q}^{\sigma}(\mathbb{R}^2)).
		\end{split}
	\end{equation*}
	By Lemma \ref{t3-1} and (\ref{3.61}), we see that $\theta_0\in \dot{B}_{p,q}^{s_{\rm c}}(\mathbb{R}^2)$, $\theta\in X_T^{p,q}$ and
	\begin{equation}\label{3.66.1}
		\|\theta\|_{\widetilde{L}^{\infty}(0,T;\dot{B}_{p,q}^{s_{\rm c}})}\leqslant \|\theta\|_{X_T^{p,q}}\leqslant K.
	\end{equation}
	It is easy to check that $\theta$ is a solution to (\ref{QG}) on $[0,T]$.
	Next, we show the continuity in time of the solution $\theta$ by the idea in \cite{Danchin}.
	Let $(s,\rho)\in \{(s_{\rm c},q),(0,1)\}$.
	Since it holds $\partial_t\Delta_j\theta=\Delta_jF-(-\Delta)^{\frac{\alpha}{2}}\Delta_j\theta-\Delta_j(u\cdot \nabla \theta)$, we have
	\begin{equation*}\label{3.73}
		\begin{split}
			&\|\partial_t\Delta_j\theta\|_{\dot{B}_{p,\rho}^s}\\
			&\quad\leqslant
			C2^{sj}\|\partial_t\Delta_j\theta\|_{L^p}\\
			&\quad\leqslant
			C2^{sj+2(\frac{1}{r}-\frac{1}{p})j}\|\Delta_jF(t)\|_{L^r}
			+C2^{sj+\alpha j}\|\Delta_j\theta\|_{L^p}
			+C2^{sj+j}\|u(t)\|_{L^p}\|\theta(t)\|_{L^{\infty}}\\
			&\quad\leqslant
			C2^{sj+2(\frac{1}{r}-\frac{1}{p})j}\sup_{t>0}\|F(t)\|_{\dot{B}_{r,\infty}^0}
			+C2^{sj+\alpha j}\|\theta\|_{\widetilde{L}^{\infty}(0,T;\dot{B}_{p,1}^0)}
			+C2^{sj+j}\|\theta\|_{X_T^{p,q}}^2,
		\end{split}
	\end{equation*}
	which implies $\partial_t\Delta_j\theta\in L^{\infty}(0,T;\dot{B}_{p,\rho}^s(\mathbb{R}^2))$.
	Therefore, we have
	\begin{equation*}\label{3.74}
		\Theta_m:=\sum_{|j|\leqslant m}\Delta_j \theta\in C([0,T];\dot{B}_{p,1}^0(\mathbb{R}^2)\cap \dot{B}_{p,q}^{s_{\rm c}}(\mathbb{R}^2)),
		\qquad	m\in \mathbb{N}.
	\end{equation*}
	It follows from $q<\infty$ and (\ref{3.66.1}) that
	\begin{equation*}\label{3.75}
		\begin{split}
			&\|\Theta_m-\theta\|_{L^{\infty}(0,T;\dot{B}_{p,1}^0\cap \dot{B}_{p,q}^{s_{\rm c}})}\\
			&\quad\leqslant
			C\sum_{|j|\geqslant m}\|\Delta_j\theta\|_{L^{\infty}(0,T;L^p)}
			+C\left\|\left\{2^{s_{\rm c}j}\|\Delta_j\theta\|_{L^{\infty}(0,T;L^p)}\right\}_{\{j:|j|\geqslant m\}}\right\|_{l^q}\\
			&\quad\to 0,
			\qquad{\rm as}\ m\to \infty.
		\end{split}
	\end{equation*}
	Hence, we see that
	$\theta
	\in C([0,T];\dot{B}_{p,1}^0(\mathbb{R}^2)\cap \dot{B}_{p,q}^{s_{\rm c}}(\mathbb{R}^2))
	\subset
	C([0,T];B_{p,q}^{s_{\rm c}}(\mathbb{R}^2))$.
	Since
	\begin{equation*}\label{3.69}
		\begin{split}
			&\|\theta^{(n)}(0)-\theta_0\|_{\dot{B}_{p,q}^{\sigma}}\\
			&\quad\leqslant C\|S_{n+3}(\theta_0^{(n)}-\theta_0)\|_{\dot{B}_{p,q}^{\sigma}}
			+\|(1-S_{n+3})\theta_0\|_{\dot{B}_{p,q}^{\sigma}}\\
			&\quad\leqslant C\|\theta_0^{(n)}-\theta_0\|_{\dot{B}_{p,q}^{\sigma}}
			+\|(1-S_{n+3})\theta_0\|_{\dot{B}_{p,q}^{\sigma}}
			\to 0,\\
			&\|(1-e^{-T(-\Delta)^{\frac{\alpha}{2}}})\theta_0^{(n+1)}
				-(1-e^{-T(-\Delta)^{\frac{\alpha}{2}}})\theta_0\|_{\dot{B}_{p,q}^{\sigma}}\\
			&\quad\leqslant
			\|\theta_0^{(n+1)}-\theta_0\|_{\dot{B}_{p,q}^{\sigma}}
			+\|e^{-T(-\Delta)^{\frac{\alpha}{2}}}(\theta_0^{(n+1)}-\theta_0)\|_{\dot{B}_{p,q}^{\sigma}}\\
			&\quad\leqslant
			C\|\theta_0^{(n+1)}-\theta_0\|_{\dot{B}_{p,q}^{\sigma}}\to 0,\\
			&\|(\theta^{(n)}(T)-e^{-(-\Delta)^{\frac{\alpha}{2}}}\theta^{n}(0))-(\theta(T)-e^{-(-\Delta)^{\frac{\alpha}{2}}}\theta_0)\|_{\dot{B}_{p,q}^{\sigma}}\\
			&\quad\leqslant
			\|\theta^{(n)}(T)-\theta(T)\|_{\dot{B}_{p,q}^{\sigma}}+\|e^{-T(-\Delta)^{\frac{\alpha}{2}}}(\theta^{(n)}(0)-\theta_0)\|_{\dot{B}_{p,q}^{\sigma}}\\
			&\quad\leqslant
			\|\theta^{(n)}-\theta\|_{L^{\infty}(0,T;\dot{B}_{p,q}^{\sigma})}
			+C\|\theta^{(n)}(0)-\theta_0\|_{\dot{B}_{p,q}^{\sigma}}
			\to 0
		\end{split}
	\end{equation*}
	as $n\to \infty$,
	we obtain by letting $n\to \infty$ in (\ref{3.3.1}) that
	\begin{equation*}\label{3.71}
		(1-e^{-T(-\Delta)^{\frac{\alpha}{2}}})\theta_0=\theta(T)-e^{-T(-\Delta)^{\frac{\alpha}{2}}}\theta_0,\qquad \theta(0)=\theta_0,
	\end{equation*}
	which implies
	\begin{equation}\label{3.72}
		\theta(T)=\theta(0)=\theta_0.
	\end{equation}
	Let us extend $\theta$ to the function on the interval $[0,\infty)$ periodically as
	\begin{equation*}\label{3.76}
		\theta(t)=\theta(t-NT),\qquad {\rm for}\ NT<t\leqslant (N+1)T,\ N\in \mathbb{N}.
	\end{equation*}
	Then, $\theta\in BC([0,\infty);B_{p,q}^{s_{\rm c}}(\mathbb{R}^2))$
	and $\theta$ is a $T$-time periodic solution to (\ref{QG}) satisfying (\ref{1.4}).
	Finally, we prove the uniqueness.
	Let $\widetilde{\theta}$ be arbitrary solution satisfying (\ref{1.4}).
	Note that since $0<\sigma<s_c$,
	we see that
	$\theta,\widetilde{\theta} \in L^{\infty}(0,T;L^p(\mathbb{R}^2))
	\cap\widetilde{L}^{\infty}(0,T;\dot{B}_{p,q}^{s_{\rm c}}(\mathbb{R}^2))
	\subset \widetilde{L}^{\infty}(0,T;\dot{B}_{p,q}^{\sigma}(\mathbb{R}^2))$
	holds by the similar calculation as (\ref{2.13.2.0}).
	Then, it follows from (\ref{3.50}) and (\ref{3.66.1}) that
	\begin{equation*}\label{3.77}
		\begin{split}
			\|\theta-\widetilde{\theta}\|_{\widetilde{L}^{\infty}(0,T;\dot{B}_{p,q}^{\sigma})}
			&\leqslant
				C_3
				\left(
					\|\theta\|_{X_T^{p,q}}
					+\|\widetilde{\theta}\|_{\widetilde{L}^{\infty}(0,T;\dot{B}_{p,q}^{s_{\rm c}})}
				\right)
				\|\theta-\widetilde{\theta}\|_{\widetilde{L}^{\infty}(0,T;\dot{B}_{p,q}^{\sigma})}\\
			&\leqslant
				2KC_3
				\|\theta-\widetilde{\theta}\|_{\widetilde{L}^{\infty}(0,T;\dot{B}_{p,q}^{\sigma})}\\
			&\leqslant
				\frac{1}{2}
				\|\theta-\widetilde{\theta}\|_{\widetilde{L}^{\infty}(0,T;\dot{B}_{p,q}^{\sigma})}.
		\end{split}
	\end{equation*}
	Thus, we see that $\widetilde{\theta}=\theta$ on $[0,T]$.
	The periodicity of $\theta$ and $\widetilde{\theta}$ implies $\theta=\widetilde{\theta}$ on $[0,\infty)$.
	This completes the proof.
\end{proof}

\noindent
{\bf Acknowledgements.} \\
The author would like to express his sincere gratitude to Professor Jun-ichi Segata, Faculty of Mathematics, Kyushu University, for many fruitful advices and continuous encouragement.
{This work was partly supported by Grant-in-Aid for JSPS Research Fellow Grant Number JP20J20941.}
\begin{bibdiv}
 \begin{biblist}
	 \bib{Bae}{article}{
   		author={Bae, Hantaek},
   		title={Global well-posedness of dissipative quasi-geostrophic equations
   		in critical spaces},
   		journal={Proc. Amer. Math. Soc.},
   		volume={136},
   		date={2008},
   	%	number={1},
 		%	pages={257--261},
   		issn={0002-9939},
   		%review={\MR{2350411}},
   		%doi={10.1090/S0002-9939-07-09060-0},
	 }
	 \bib{Bahouri}{book}{
	 		author={Bahouri, Hajer},
	 		author={Chemin, Jean-Yves},
	 		author={Danchin, Rapha\"{e}l},
	 		title={Fourier analysis and nonlinear partial differential equations},
	 		%series={Grundlehren der Mathematischen Wissenschaften [Fundamental Principles of Mathematical Sciences]},
	 		volume={343},
	 		publisher={Springer, Heidelberg},
	 		date={2011},
	 		%pages={xvi+523},
	 		%isbn={978-3-642-16829-1},
	 		%review={\MR{2768550}},
	 		%doi={10.1007/978-3-642-16830-7},
	 }
	 \bib{CL}{article}{
   		author={Chae, Dongho},
   		author={Lee, Jihoon},
   		title={Global well-posedness in the super-critical dissipative
   		quasi-geostrophic equations},
   		journal={Comm. Math. Phys.},
   		volume={233},
   		date={2003},
	   	%number={2},
   		pages={297--311},
   		%issn={0010-3616},
   	%	review={\MR{1962043}},
   		%doi={10.1007/s00220-002-0750-z},
		}
	 \bib{CMZ2}{article}{
   		author={Chen, Qionglei},
   		author={Miao, Changxing},
   		author={Zhang, Zhifei},
   		title={A new Bernstein's inequality and the 2D dissipative
   		quasi-geostrophic equation},
   		journal={Comm. Math. Phys.},
   		volume={271},
   		date={2007},
   		%number={3},
   		%pages={821--838},
   		issn={0010-3616},
   		%review={\MR{2291797}},
   		%doi={10.1007/s00220-007-0193-7},
	 }
	 \bib{CZ}{article}{
   		author={Chen, Qionglei},
   		author={Zhang, Zhifei},
   		title={Global well-posedness of the 2D critical dissipative
   		quasi-geostrophic equation in the Triebel-Lizorkin spaces},
   		journal={Nonlinear Anal.},
   		volume={67},
   		date={2007},
   		%number={6},
   		pages={1715--1725},
   		%issn={0362-546X},
   		%review={\MR{2326024}},
   		%doi={10.1016/j.na.2006.08.011},
	 }
	 \bib{CW}{article}{
   		author={Constantin, Peter},
   		author={Wu, Jiahong},
   		title={Behavior of solutions of 2D quasi-geostrophic equations},
   		journal={SIAM J. Math. Anal.},
   		volume={30},
   		date={1999},
	   	%number={5},
   		pages={937--948},
   		%issn={0036-1410},
   		%review={\MR{1709781}},
   		%doi={10.1137/S0036141098337333},
	 }
	 \bib{Danchin}{article}{
   		author={Danchin, R.},
   		title={Global existence in critical spaces for compressible Navier-Stokes
   		equations},
   		journal={Invent. Math.},
   		volume={141},
   		date={2000},
   		%number={3},
   		pages={579--614},
   		%issn={0020-9910},
   		%review={\MR{1779621}},
   		%doi={10.1007/s002220000078},
	 }
	 \bib{GHN}{article}{
   		author={Geissert, Matthias},
   		author={Hieber, Matthias},
   		author={Nguyen, Thieu Huy},
   		title={A general approach to time periodic incompressible viscous fluid
   		flow problems},
   		journal={Arch. Ration. Mech. Anal.},
   		volume={220},
   		date={2016},
   		%%number={3},
   		pages={1095--1118},
   		%issn={0003-9527},
   		%review={\MR{3466842}},
   		%doi={10.1007/s00205-015-0949-8},
	 }
	 \bib{HK}{article}{
	    author={Hmidi, Taoufik},
	    author={Keraani, Sahbi},
	    title={Global solutions of the super-critical 2D quasi-geostrophic
	    equation in Besov spaces},
	    journal={Adv. Math.},
	    volume={214},
	    date={2007},
	    %number={2},
	    pages={618--638},
	    %issn={0001-8708},
	    %review={\MR{2349714}},
	    %doi={10.1016/j.aim.2007.02.013},
	 }
	 \bib{KN}{article}{
   		author={Kozono, Hideo},
   		author={Nakao, Mitsuhiro},
   		title={Periodic solutions of the Navier-Stokes equations in unbounded
   		domains},
   		journal={Tohoku Math. J. (2)},
   		volume={48},
   		date={1996},
   		%number={1},
	   	pages={33--50},
   		%issn={0040-8735},
   		%review={\MR{1373173}},
   		%doi={10.2748/tmj/1178225411},
	 }
	 \bib{KY}{article}{
   		author={Kozono, Hideo},
   		author={Yamazaki, Masao},
   		title={Semilinear heat equations and the Navier-Stokes equation with
   		distributions in new function spaces as initial data},
   		journal={Comm. Partial Differential Equations},
   		volume={19},
   		date={1994},
   		%number={5-6},
   		pages={959--1014},
   		%issn={0360-5302},
   		%review={\MR{1274547}},
   		%doi={10.1080/03605309408821042},
	 }
	 \bib{Miura}{article}{
   		author={Miura, Hideyuki},
   		title={Dissipative quasi-geostrophic equation for large initial data in
   		the critical Sobolev space},
   		journal={Comm. Math. Phys.},
   		volume={267},
   		date={2006},
   		%number={1},
   		%pages={141--157},
   		issn={0010-3616},
   		%review={\MR{2238907}},
   		%doi={10.1007/s00220-006-0023-3},
   }
	 \bib{Sawano}{book}{
   		author={Sawano, Yoshihiro},
   		title={Theory of Besov spaces},
   		series={Developments in Mathematics},
   		volume={56},
   		publisher={Springer, Singapore},
   		date={2018},
   	%	pages={xxiii+945},
   	%	isbn={978-981-13-0835-2},
   	%	isbn={978-981-13-0836-9},
   	%	review={\MR{3839617}},
   		%doi={10.1007/978-981-13-0836-9},
	 }
	 \bib{Wu2}{article}{
   	author={Wu, Jiahong},
   	title={Dissipative quasi-geostrophic equations with $L^p$ data},
   	journal={Electron. J. Differential Equations},
   	date={2001},
   	pages={No. 56, 13},
   	%review={\MR{1846672}},
	 }
	 \bib{Wu}{article}{
   		author={Wu, Jiahong},
   		title={Lower bounds for an integral involving fractional Laplacians and
   		the generalized Navier-Stokes equations in Besov spaces},
   		journal={Comm. Math. Phys.},
   		volume={263},
   		date={2006},
   		%number={3},
   		pages={803--831},
   		%issn={0010-3616},
   		%review={\MR{2211825}},
   		%doi={10.1007/s00220-005-1483-6},
	 }
	 \bib{Zhang}{article}{
   		author={Zhang, Zhi-fei},
   		title={Global well--posedness for the 2D critical dissipative
   		quasi-geostrophic equation},
   		journal={Sci. China Ser. A},
   		volume={50},
   		date={2007},
   		%number={4},
   		pages={485--494},
   		%issn={1006-9283},
   		%review={\MR{2333748}},
   		%doi={10.1007/s11425-007-0002-y},
	 }
 \end{biblist}
\end{bibdiv}

\end{document}